\setlist[itemize]{noitemsep, topsep=1pt, leftmargin=20pt}
\DeclareSymbolFontAlphabet{\amsmathbb}{AMSb}
\newcommand\bcdot{\ensuremath{
  \mathchoice
   {\mskip\thinmuskip\lower0.2ex\hbox{\scalebox{1.6}{$\cdot$}}\mskip\thinmuskip}}
   {\mskip\thinmuskip\lower0.2ex\hbox{\scalebox{1.6}{$\cdot$}}\mskip\thinmuskip}
   {\lower0.3ex\hbox{\scalebox{1.2}{$\cdot$}}}
   {\lower0.3ex\hbox{\scalebox{1.2}{$\cdot$}}}
}
\theoremstyle{plain}
\newtheorem{theo}{Theorem}[section]
\newtheorem{lemma}[theo]{Lemma}
\newtheorem{theorem}[theo]{Theorem}
\newtheorem{corollary}[theo]{Corollary}
\newtheorem{proposition}[theo]{Proposition}
\theoremstyle{definition}
\newtheorem{remark}[theo]{Remark}
\theoremstyle{plain}
\newtheorem{thmint}{Theorem}
\newtheorem{corint}[thmint]{Corollary}
\numberwithin{equation}{section}
\title{A moment map for twisted-Hamiltonian vector fields\\
on locally conformally K\"ahler manifolds}
\author{Daniele Angella}
\address[Daniele Angella]{Dipartimento di Matematica e Informatica ``Ulisse Dini''\\
Universit\`a degli Studi di Firenze,
viale Morgagni 67/a,
50134 Firenze, Italy}
\email{daniele.angella@unifi.it}
\email{daniele.angella@gmail.com}
\author{Simone Calamai}
\address[Simone Calamai]{Dipartimento di Matematica e Informatica ``Ulisse Dini''\\
Universit\`a degli Studi di Firenze,
viale Morgagni 67/a,
50134 Firenze, Italy}
\email{simone.calamai@unifi.it}
\email{simocala@gmail.com}
\author{Francesco Pediconi}
\address[Francesco Pediconi]{Department of Mathematics, Aarhus University, Ny Munkegade 118, 8000 Aarhus C, Denmark}
\email{francesco.pediconi@math.au.dk}
\email{francesco.pediconi@gmail.com}
\author{Cristiano Spotti}
\address[Cristiano Spotti]{Department of Mathematics, Aarhus University, Ny Munkegade 118, 8000 Aarhus C, Denmark}
\email{c.spotti@math.au.dk}
\keywords{Locally conformally K\"ahler, twisted Hamiltonian, moment map, Donaldson-Fujiki, Chern connection, Weyl connection}
\thanks{
The first-named, second-named, third-named authors are supported by project PRIN2017 ``Real and Complex Manifolds: Topology, Geometry and holomorphic dynamics'' (code 2017JZ2SW5), and by GNSAGA of INdAM.
The third-named and fourth-named authors are supported by Villum Fonden Grant 0019098}
\subjclass[2020]{53B35, 53C55, 53D15}
\begin{document}

\begin{abstract}
We extend the classical Donaldson-Fujiki interpretation of the scalar curvature as moment map in K\"ahler Geometry to the wider framework of locally conformally K\"ahler Geometry.
\end{abstract}

\maketitle

\section*{Introduction}

By the foundational work of Fujiki \cite{fujiki} and Donaldson \cite{donaldson}, it is known that the scalar curvature of K\"ahler metrics arise as a moment map for an infinite-dimensional Hamiltonian action. More precisely, given a compact symplectic manifold $(M,\omega_o)$, the space $\mathcal{J}_{\rm alm}(\omega_o)$ of compatible almost complex structures can be endowed with a natural structure of infinite-dimensional K\"ahler manifold $(\mathbb{J},\mathbb{\Omega})$ that is invariant under the action of the automorphism group ${\rm Aut}(M,\omega_o)$ by pullback. Here, an almost complex structure $J$ is said to be compatible if $\omega_o(\_,J\_)$ is a $J$-almost Hermitian (and hence almost K\"ahler) metric. This action preserves the analytic subset $\mathcal{J}(\omega_o) \subset \mathcal{J}_{\rm alm}(\omega_o)$ of integrable almost complex structures and the map ${\rm scal}: \mathcal{J}_{\rm alm}(\omega_o) \to \mathcal{C}^{\infty}(M,\mathbb{R})$, that assigns to any $J \in \mathcal{J}_{\rm alm}(\omega_o)$ the scalar curvature of the metric $\omega_o(\_,J\_)$, is smooth, ${\rm Aut}(M,\omega_o)$-equivariant and it verifies
$$
\int_M {\rm d}\,{\rm scal}|_J(v)\,h_X\,\omega_o^n = -\tfrac12\,\mathbb{\Omega}_J(X^*_J,v) \quad \text{ for any } J \in \mathcal{J}(\omega_o) \, ,\,\, v \in T_J\mathcal{J}_{\rm alm}(\omega_o) \, , \,\, X \in \mathfrak{ham}(M,\omega_o) \,\, .
$$
Here, $\mathfrak{ham}(M,\omega_o)$ denotes the Lie subalgebra of Hamiltonian vector fields, {\it i.e.\ }those infinitesimal automorphisms $X \in \mathfrak{aut}(M,\omega_o)$ such that $X \lrcorner \omega_o = {\rm d}h_X$ for some $h_X \in \mathcal{C}^{\infty}(M,\mathbb{R})$, uniquely determined up to a constant, and $X^*$ denotes the fundamental vector field associated to $X$. For this reason, we say that ${\rm scal}$ is a {\it moment map for the action of ${\rm Ham}(M,\omega_o)$ on $\mathcal{J}(\omega_o)$}, where ${\rm Ham}(M,\omega_o)$ denotes the Hamiltonian diffeomorphism group.

This fact has deep consequences in K\"ahler Geometry. In particular, the constant scalar curvature K\"ahler metrics arise as zeroes of a moment map equation. More precisely, the problem of finding constant scalar curvature K\"ahler metrics in a fixed K\"ahler class is reduced to finding a zero for the moment map in the orbit of the complex structure under the complexified action. Inspired by the Hilbert-Mumford criterion in Geometric Invariant Theory in finite-dimension, this leads to the notion of K-stability, see {\it e.g.\ }\cite{szekelyhidi}.

\smallskip

This paper is an attempt to generalize this moment map framework to the non-K\"ahler setting.
More precisely, we deal with a compact almost symplectic manifold $(M,\omega)$ with ${\rm d}\omega \neq 0$, admitting compatible complex structures.
We notice here that, in the literature, there appeared other works concerning this problem, see {\itshape e.g.\ }\cite{Apos-Maschl, FutHatOrn}, which are distinct from our approach. For other appearance of moment maps for group actions on locally conformally symplectic or Vaisman manifolds, see \cite{haller-rybicki, gini-ornea-parton, stanciu}. However, we stress that the group action that we consider lives in an infinite-dimensional K\"ahler setting. Recently, Garc\'ia-Prada and Salamon \cite{garciaprada-salamon} described a setting where the moment map is given by the Ricci form and studied moment map interpretations of the K\"ahler-Einstein condition. As the anonymous referee suggested, it would be interesting to try to extend this construction to our locally conformally K\"ahler setting.

One of the first natural non-K\"ahler cases of study \cite{gray-hervella} is given by the so-called {\it locally conformally symplectic} condition, {\it i.e.\ }we ask that ${\rm d}\omega = \theta \wedge \omega$ for a given closed $1$-form $\theta$. Notice that, by introducing the twisted exterior differential operator ${\rm d}_{\theta} \coloneqq {\rm d} - \theta \wedge$, the previous condition can be written as ${\rm d}_{\theta}\omega = 0$. This nomenclature is due to the following characterization: $\omega$ is locally conformally symplectic if and only if, for any point $x \in M$, $\omega$ is locally conformal to a symplectic form in a neighborhood of $x$. This shows clearly that the locally conformally symplectic condition is actually a property for the whole conformal class $[\omega]$ of $\omega$.
Moreover, there is a further relation with the symplectic framework: indeed, $(M,\omega)$ admits a unique minimal covering map $\pi: \widehat{M} \to M$ such that $\pi^*\omega$ is globally conformal to a symplectic form $\widehat{\omega}_o$ on $\widehat{M}$ and the deck transformation group acts by homotheties on $(\widehat{M},\widehat{\omega}_o)$ (see {\it e.g.\ }\cite[Section 2]{gini-ornea-parton-piccinni} and \cite[Section 2.1]{bgp}). Moreover, a vector field $X \in \mathfrak{X}(M)$ lifts through $\pi$ to an infinitesimal automorphism $\pi^*X \in \mathfrak{aut}(\widehat{M},\widehat{\omega}_o)$ if and only if ${\rm d}_{\theta}(X \lrcorner \omega)=0$ \cite[Proposition 3.3]{bgp} and $\pi^*X \in \mathfrak{ham}(\widehat{M},\widehat{\omega}_o)$ if and only if $X \lrcorner \omega$ is ${\rm d}_{\theta}$-exact \cite[Corollary 3.10]{bgp}. Therefore, it is natural to look at the action of the generalized Lie transformation group ${\rm Aut}^{\star}(M,[\omega])$ generated by the Lie algebra of {\it special conformal vector fields of $(M,\omega)$}
$$
\mathfrak{aut}^{\star}(M,[\omega]) := \big\{ X \in \mathfrak{X}(M) : {\rm d}_{\theta}(X \lrcorner \omega) = 0 \big\},
$$
(see Section \ref{sect:defG1} and Section \ref{sect:defG2}) on the space $\mathcal{J}_{\rm alm}(\omega)$ of compatible almost complex structures on $(M,\omega)$. As in the K\"ahler setting, one may expect the existence of a moment map, related to the scalar curvature and the $1$-form $\theta$, when one restricts $\mathfrak{aut}^{\star}(M,[\omega])$ to the Lie subalgebra of {\it twisted-Hamiltonian vector fields}
$$
\mathfrak{ham}(M,[\omega]) \coloneqq \big\{ X \in \mathfrak{X}(M) : X \lrcorner \omega = {\rm d}_{\theta}h \text{ for some } h \in \mathcal{C}^{\infty}(M,\mathbb{R}) \big\},
$$
and $\mathcal{J}_{\rm alm}(\omega)$ to the analytic subset $\mathcal{J}(\omega)$ of integrable almost complex structures. However, two main difficulties occur in this picture. The first one is that the group ${\rm Aut}^{\star}(M,[\omega])$ preserves just the conformal class $[\omega]$, and so the map ${\rm scal}: \mathcal{J}_{\rm alm}(\omega) \to \mathcal{C}^{\infty}(M,\mathbb{R})$ that associates to any $J \in \mathcal{J}_{\rm alm}(\omega)$ the Riemannian scalar curvature of $\omega(\_,J\_)$ turns out to be non-${\rm Aut}^{\star}(M,[\omega])$-equivariant. The second one is that, during the linearization procedure of the map scal, the term $D(\theta^{\sharp})$ appears, where $D$ denotes the Levi-Civita connection and $\_^{\sharp}$ the metric duality, and it seems to us that it cannot be handled via an absorption scheme by adding extra terms depending on $\theta$ itself.

In order to overcome these issues, we impose a further symmetry. More precisely, we consider the {\it symplectic dual of $\theta$}, {\it i.e.\ }the unique vector field $V \in \mathfrak{X}(M)$ such that $V \lrcorner \omega = \theta$, and we assume that there exists a compact, connected Lie group $\mathsf{K}$ acting effectively on $M$ such that its action preserves $\omega$, is of {\it Lee type}, {\it i.e.\ }the flow of $V$ is a 1-parameter subgroup contained in the center of $\mathsf{K}$, and is {\it twisted-Hamiltonian}, {\it i.e.\ }the Lie algebra $\mathfrak{k} \coloneqq {\rm Lie}(\mathsf{K})$ is contained in $\mathfrak{ham}(M,[\omega])$ (see Section \ref{sect:actionK}). Then, it turns out that the group ${\rm Aut}^{\star}(M,[\omega])^{\mathsf{K}} \coloneqq {\rm Aut}^{\star}(M,[\omega]) \cap {\rm Diff}(M)^{\mathsf{K}}$ preserves the $2$-form $\omega$ (see \eqref{eq:autTpresomega}) and that, for any $J \in \mathcal{J}(\omega)^{\mathsf{K}} \coloneqq \mathcal{J}(\omega) \cap \mathcal{J}_{\rm alm}(\omega)^{\mathsf{K}}$, the endomorphism $D(\theta^{\sharp})$ is orthogonal to the tangent space $T_J\mathcal{J}_{\rm alm}(\omega)^{\mathsf{K}}$ (see \eqref{eq:lin-scal-dim(4)}, \eqref{eq:lin-scal-dim(5)} and \eqref{eq:lin-deltaomega-dim(4)}). Here, we denoted by ${\rm Diff}(M)^{\mathsf{K}}$ the subgroup of diffeomorphisms that commute with $\mathsf{K}$ and by $\mathcal{J}_{\rm alm}(\omega)^{\mathsf{K}}$ the subspaces of $\mathsf{K}$-invariant, compatible almost complex structures on $(M,\omega)$. Therefore, if we denote by ${\rm Ham}(M,[\omega])$ the generalized Lie transformation group generated by $\mathfrak{ham}(M,[\omega])$ (see Section \ref{sect:defG1} and Section \ref{sect:defG2}), set ${\rm Ham}(M,[\omega])^{\mathsf{K}} \coloneqq {\rm Ham}(M,[\omega]) \cap {\rm Diff}(M)^{\mathsf{K}}$, and let $\mathcal{C}^{\infty}(M;\mathbb{R})^{\mathsf{K}}$ be the space of $\mathsf{K}$-invariant smooth functions, the previous arguments allow us to prove the main result of this paper, that is

\begin{thmint}[see Proposition \ref{prop:importante}, Proposition \ref{prop:twHamaction} and Theorem \ref{thm:moment-map}] \label{thm:a}
Let $(M^{2n},\omega)$ be a compact, connected, $2n$-dimensional, smooth manifold endowed with a locally conformally symplectic structure with non-exact Lee form $\theta$.
Assume that there exists a compact, connected Lie group $\mathsf{K}$ acting effectively on $M$ such that its action preserves $\omega$ \ref{(k1)}, is of Lee type \ref{(k2)} and is twisted-Hamiltonian \ref{(k3)}. Then, the map
$$
\mu \colon \mathcal{J}(\omega)^{\mathsf{K}} \to \mathcal{C}^{\infty}(M;\mathbb{R})^{\mathsf{K}} \,\, , \quad \mu \coloneqq {\rm scal}^{\rm Ch} +n\,{\rm d}^*\theta
$$
is a {\em moment map} for the action of ${\rm Ham}(M,[\omega])^{\mathsf{K}}$ on $\mathcal{J}(\omega)^{\mathsf{K}}$.
\end{thmint}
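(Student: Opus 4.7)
The plan is to verify directly the moment-map identity
\begin{equation*}
\int_M {\rm d}\mu|_J(v)\,h_X\,\omega^n \;=\; -\tfrac12\,\mathbb{\Omega}_J(X^*_J,v)
\end{equation*}
for every $X \in \mathfrak{ham}(M,[\omega])^{\mathsf{K}}$ with $X \lrcorner \omega = {\rm d}_\theta h_X$, every $J \in \mathcal{J}(\omega)^{\mathsf{K}}$, and every $v \in T_J\mathcal{J}(\omega)^{\mathsf{K}}$, together with the ${\rm Ham}(M,[\omega])^{\mathsf{K}}$-equivariance of $\mu$. I would organize the argument in three steps.

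\emph{Step 1.} First I would check that ${\rm Ham}(M,[\omega])^{\mathsf{K}}$ genuinely acts on $\mathcal{J}(\omega)^{\mathsf{K}}$ preserving the K\"ahler structure $(\mathbb{J},\mathbb{\Omega})$. This rests on the observation, invoked in the introduction, that although ${\rm Aut}^{\star}(M,[\omega])$ merely preserves the conformal class $[\omega]$, its subgroup commuting with $\mathsf{K}$ actually preserves $\omega$ itself, so pullback is a well-defined symplectic action on $\mathcal{J}(\omega)^{\mathsf{K}}$. Equivariance of $\mu$ is then immediate: ${\rm scal}^{\rm Ch}$ is determined by the pair $(\omega,J)$ through the Chern connection, while $\theta$ and ${\rm d}^*\theta$ depend only on $[\omega]$ and the $\mathsf{K}$-invariant metric.

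\emph{Step 2.} Next I would linearize $\mu$ at $J$ along a $\mathsf{K}$-invariant variation $v = \dot J \in T_J\mathcal{J}(\omega)^{\mathsf{K}}$. For the Chern scalar curvature, the calculation follows the Fujiki--Donaldson template applied to the Chern-Ricci form, while carrying along the torsion contributions proportional to $\theta$. The decisive simplification, signaled in the introduction, is that the term $D(\theta^{\sharp})$ obstructing a Riemannian-based approach becomes $L^2$-orthogonal to $T_J\mathcal{J}_{\rm alm}(\omega)^{\mathsf{K}}$ under the Lee-type hypothesis on $\mathsf{K}$, and so drops out of the pairing with $v$. The surviving terms split into a main piece---structurally identical to the K\"ahler case---and a divergence-type piece that will be cancelled by the linearization of $n\,{\rm d}^*\theta$.

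\emph{Step 3 and main obstacle.} Finally, I would pair ${\rm d}\mu|_J(v)$ against $h_X$ with respect to $\omega^n$ and integrate by parts. Because ${\rm d}(\omega^n) = n\,\theta \wedge \omega^n$, each integration produces an extra $\theta$-correction, and the coefficient $n$ in front of ${\rm d}^*\theta$ is forced precisely by the requirement that these corrections cancel. Substituting $X \lrcorner \omega = {\rm d}_\theta h_X$ then converts derivatives of $h_X$ into pairings with $X \lrcorner \omega$, and identifying $X^*_J$ with $\mathcal{L}_X J$ recovers the right-hand side as the $L^2$-pairing defining $\mathbb{\Omega}_J$. The most delicate point will be this last matching: the $\theta$-corrections coming from the non-closedness of $\omega^n$, from the torsion of the Chern connection, and from the twisted codifferential must conspire into a coherent cancellation pattern yielding exactly the coefficient $n$. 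The Lee-type condition on $\mathsf{K}$ is what keeps this bookkeeping tractable, by guaranteeing that contributions involving the Lee vector field $V$ are trivial on $\mathsf{K}$-invariant data.
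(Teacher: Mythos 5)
Your Step~1 matches the paper's Propositions \ref{prop:importante} and \ref{prop:twHamaction}, and you have correctly isolated the one genuinely structural point: under \ref{(k2)} the Lee field $V$ is holomorphic for every $J\in\mathcal{J}(\omega)^{\mathsf{K}}$, so $\nabla(\theta^{\sharp})$ is $J$-invariant and hence $L^2$-orthogonal to the $J$-anti-invariant directions $\mathring{a}$ (this is \eqref{eq:lin-scal-dim(4)}, via $D(\theta^{\sharp})=\nabla(\theta^{\sharp})+\tfrac12|\theta|^2{\rm Id}$). But Steps~2--3 contain a genuine gap, and the mechanism you propose for fixing the coefficient $n$ is wrong. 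You write that ``${\rm d}(\omega^n)=n\,\theta\wedge\omega^n$'' forces $\theta$-corrections in the integration by parts: on a $2n$-manifold $\omega^n$ is a top-degree form, so ${\rm d}(\omega^n)=0$ and $\theta\wedge\omega^n=0$ identically; moreover $\omega^n$ \emph{is} the Riemannian volume form of $g_J$, so pairing against it produces no correction terms at all. The coefficient $n$ has a different origin: one first rewrites $\mu$ via \eqref{eq:scalCh-scal} as ${\rm scal}+{\rm d}^*\theta+\tfrac{n-1}2|\theta|^2$ (which is also what provides the smooth extension to non-integrable $J$, since the Chern--Riemannian comparison only holds on $\mathcal{J}(\omega)^{\mathsf{K}}$). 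The unit coefficient on ${\rm d}^*\theta$ is then exactly what converts ${\rm d}h$ into the twisted differential ${\rm d}_\theta h={\rm d}h-h\theta$ under integration by parts, so that $X\lrcorner\omega={\rm d}_\theta h$ can be substituted; and the term $\tfrac{n-1}2|\theta|^2$ --- entirely absent from your proposal --- is indispensable, because its linearization $-\tfrac{n-1}2\,g(\mathring{a},\theta\otimes\theta^{\sharp})$ cancels the identical term of opposite sign in ${\rm scal}'$ (see \eqref{eq:lin-scal} and \eqref{eq:normth'}). Without that term the pairing does not reduce to $\langle\delta^*({\rm d}_\theta h),\mathring{a}\rangle_{L^2}$ and the computation does not close.

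A secondary point: your plan to linearize ${\rm scal}^{\rm Ch}$ directly through the Chern--Ricci form ``carrying along the torsion contributions'' is a genuinely different route from the paper's (which linearizes the Riemannian scalar curvature via Besse's formula and only then trades it for ${\rm scal}^{\rm Ch}$), but as stated it is only a declaration of intent; the cancellations you defer to ``bookkeeping'' are precisely where the content lies, namely the identities $\delta^*({\rm d}_\theta h)=J\circ\nabla X+\tfrac12\theta(JX){\rm Id}$ and $2\,{\rm Sym}_g(\nabla X)=-J\circ\mathcal{L}_XJ$ that identify the result with $-\tfrac12\,\mathbb{\Omega}_J(X^*_J,\hat a_J)$. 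As it stands the proposal identifies the right statement and the right hypothesis-driven orthogonality, but does not prove the moment-map identity.
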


Here, we denoted by ${\rm scal}^{\rm Ch}$ the map that assigns to any $J \in \mathcal{J}(\omega)^{\mathsf{K}}$ the {\it Chern-scalar curvature} of the locally conformally K\"ahler metric $\omega(\_,J\_)$ (see Appendix \ref{subsec:appCh}). As expected, the moment map framework developed in Theorem \ref{thm:a} leads to the existence of a {\it Futaki-type invariant}. More precisely, if we denote by $\kappa: \mathfrak{ham}(M,[\omega])^{\mathsf{K}} \to \mathcal{C}^{\infty}(M,\mathbb{R})^{\mathsf{K}}$ the linear isomorphism that verifies $X \lrcorner \omega = {\rm d}_\theta (\kappa(X))$ for any $X \in \mathfrak{ham}(M,[\omega])^{\mathsf{K}}$ (see \eqref{eq:kappa} and Proposition \ref{prop:importante}), and by $\mathfrak{z}(\mathfrak{k})$ the center of the Lie algebra $\mathfrak k$, then the following corollary holds true.

\begin{corint}[see Corollary \ref{cor:futaki}] \label{cor:b}
Under the same hypotheses of Theorem \ref{thm:a}, the value
$$
\underline{\mu} \coloneqq \frac{\int_M \mu(J) \, \omega^n}{\int_M \omega^n}
$$
and the map
$$
\mathcal{F} \colon \mathfrak{z}(\mathfrak{k}) \to \mathbb{R} \,\, , \quad
\mathcal{F}(X) \coloneqq \int_M (\mu(J)-\underline{\mu}) \kappa(X) \, \omega^n
$$
are independent of $J$, in the connected components of ${\mathcal J}(\omega)^{\mathsf{K}}$.
\end{corint}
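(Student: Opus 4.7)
The plan is to apply the moment-map identity of Theorem \ref{thm:a} twice, with two carefully chosen test fields in $\mathfrak{ham}(M,[\omega])^{\mathsf{K}}$, and to observe that in both cases the right-hand side vanishes because the relevant fundamental vector field on $\mathcal{J}(\omega)^{\mathsf{K}}$ is trivial.

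I would first fix $J_0 \in \mathcal{J}(\omega)^{\mathsf{K}}$, pick a smooth path $\{J_t\} \subset \mathcal{J}(\omega)^{\mathsf{K}}$ with $v \coloneqq \dot J_0 \in T_{J_0}\mathcal{J}(\omega)^{\mathsf{K}}$, and differentiate under the integral. Since $\omega$ is independent of $J$ and $\int_M\omega^n$ is a topological constant, showing that $\underline{\mu}$ is locally constant on $\mathcal{J}(\omega)^{\mathsf{K}}$ reduces to
$$
\int_M {\rm d}\mu|_{J_0}(v)\, \omega^n = 0\,.
$$
The key observation is that the constant function $1$ lies in the image of $\kappa$: since $-V \lrcorner \omega = -\theta = {\rm d}_{\theta}(1)$, where $V$ is the symplectic dual of $\theta$, one has $\kappa(-V) = 1$. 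Hypothesis \ref{(k2)} (Lee type) ensures $V \in \mathfrak{z}(\mathfrak{k}) \subset \mathfrak{ham}(M,[\omega])^{\mathsf{K}}$, so the moment-map identity from Theorem \ref{thm:a} applies with $X = -V$. But every $J \in \mathcal{J}(\omega)^{\mathsf{K}}$ is by definition $\mathsf{K}$-invariant, hence the fundamental vector field $(-V)^*_{J_0}$ on $\mathcal{J}(\omega)^{\mathsf{K}}$ vanishes, and so does the right-hand side. This gives the first half of the corollary.

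The second half I would treat by the same mechanism. Because both $\omega$ and $\kappa$ are independent of $J$, the integral $\int_M \kappa(X)\,\omega^n$ is a topological constant, and by the first step $\underline{\mu}$ is itself constant along $\{J_t\}$; the variation of $\mathcal{F}(X)$ therefore reduces to
$$
\frac{\rm d}{{\rm d}t}\bigg|_{t=0}\int_M \mu(J_t)\,\kappa(X)\,\omega^n = \int_M {\rm d}\mu|_{J_0}(v)\, \kappa(X)\, \omega^n\,.
$$
Applying the moment-map identity with $X \in \mathfrak{z}(\mathfrak{k}) \subset \mathfrak{k} \subset \mathfrak{ham}(M,[\omega])^{\mathsf{K}}$ rewrites the right-hand side as a universal constant times $\mathbb{\Omega}_{J_0}(X^*_{J_0},v)$, and once more $\mathsf{K}$ acts trivially on $\mathcal{J}(\omega)^{\mathsf{K}}$, so $X^*_{J_0}=0$ and the variation vanishes.

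The only delicate point I anticipate is passing from the infinitesimal invariance just established to invariance on an entire connected component of $\mathcal{J}(\omega)^{\mathsf{K}}$: on the smooth locus this is immediate by path-connectedness, whereas on the analytic subset one needs to connect any two points by a piecewise-smooth path in $\mathcal{J}(\omega)^{\mathsf{K}}$ and integrate the vanishing derivative. No further computation beyond Theorem \ref{thm:a} and the defining property of $\kappa$ from Proposition \ref{prop:importante} should be required.
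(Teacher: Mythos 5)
Your argument is correct and follows essentially the same route as the paper's own proof: the paper likewise differentiates $\int_M \mu(J_t)\,\kappa(X)\,\omega^n$ along a path for $X\in\mathfrak{z}(\mathfrak{k})$, sees that the derivative equals $-\langle \nabla X,\hat a_J\rangle_{L^2(M,g)}=0$ because $\mathcal{L}_XJ=0$ (which is exactly your observation that $X^*_J=-\mathcal{L}_XJ$ vanishes on $\mathsf{K}$-invariant $J$), and then specializes to $\kappa(V)=-1$ with $V\in\mathfrak{z}(\mathfrak{k})$ to get the constancy of $\underline{\mu}$. The only blemish is the chain $\mathfrak{z}(\mathfrak{k})\subset\mathfrak{k}\subset\mathfrak{ham}(M,[\omega])^{\mathsf{K}}$ in your second step: the second inclusion is false in general (non-central elements of $\mathfrak{k}$ need not be $\mathsf{K}$-invariant vector fields), but the inclusion you actually use, $\mathfrak{z}(\mathfrak{k})\subset\mathfrak{ham}(M,[\omega])^{\mathsf{K}}$, does hold and you state it correctly in the first step, so the proof is unaffected.
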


We expect that our results could possibly lead to new notions of {\it canonical metrics} and {\it stability conditions} in the context of {\it locally conformally K\"ahler Geometry}, playing an analogue role to the constant scalar curvature K\"ahler metrics and K-stability in K\"ahler Geometry. Note that the equation $\mu(J) = \underline{\mu}$ arising from Theorem \ref{thm:a} and \ref{cor:b} does not reduce to the constant scalar curvature K\"ahler equation on the minimal symplectic covering $\widehat M$ (see Remark \ref{rem:csck-covering}). However, in the special case when $J$ is {\it Vaisman}, {\it i.e.\ }$D(\theta^{\sharp})=0$ at $J$, then ${\rm d}^*\theta=0$ at $J$ and so we recover the constant (Chern) scalar curvature equation on $M$. 
\medskip

The paper is organized as follows.
In Section \ref{sect:prel}, we summarize some basic facts on locally conformally symplectic and locally conformally K\"ahler geometry.
In Section \ref{sect:twH-torus}, we recall some notions about generalized Lie transformation groups and torus actions.
In Section \ref{sect:Jomega}, we study the properties of the space $\mathcal{J}_{\rm alm}(\omega)^{\mathsf{K}}$ and we compute some linearization formulas that play a role in the proof of Theorem \ref{thm:a}.
In Section \ref{sect:main}, we prove our main results.
Finally, in Appendix \ref{appendixA} we collect some facts and detailed computations concerning the Chern connection and the Weyl connection of a locally conformally K\"ahler manifold.
\bigskip

\noindent {\itshape Acknowledgements.}
The authors are warmly grateful to Vestislav Apostolov, Giovanni Bazzoni, Abdellah Lahdili, Mehdi Lejmi and Liviu Ornea for several interesting discussions and suggestions on the topic. We also thank the anonymous referees for their careful reading of the manuscript and for their suggestions.

\medskip
\section{Preliminaries and notation} \label{sect:prel}
\setcounter{equation} 0

Let $M^{2n}$ be a compact, connected, orientable, smooth manifold of real dimension $2n$. For any vector bundle $E \to M$ over $M$, we denote by $\mathcal{C}^{\infty}(M;E)$ the space of smooth sections of $E$. For the sake of notation, we set $\mathfrak{X}(M) \coloneqq \mathcal{C}^{\infty}(M;TM)$ and, for any $X \in \mathfrak{X}(M)$, $\Theta^X_t : M \to M$ will be the flow of $X$, $t \in \mathbb{R}$.

\subsection{Locally conformally symplectic manifolds} \label{sect:prel1} \hfill \par

We fix an {\it almost symplectic structure on $M$}, {\it i.e.\ }a smooth $2$-form $\omega$ such that $\omega_x$ is non degenerate for any $x \in M$, that corresponds to an ${\rm Sp}(2n,\mathbb{R})$-reduction of the frame bundle of $M$. We say that $\omega$ is {\it locally conformally symplectic} if, for any point $x \in M$, there exist a neighborhood $U \subset M$ of $x$ and a smooth function $f: U_x \to \mathbb{R}$ such that $e^{-f}\omega|_{U}$ is closed. By the Poincar\'e Lemma, the local conformal changes are organized in a $1$-form $\theta$, called the {\it Lee form of $(M,\omega)$}, that verifies the condition 
\begin{equation} \label{def:lcs}
{\rm d}\omega=\theta\wedge\omega \,\, , \quad {\rm d}\theta = 0 \,\, .
\end{equation}
By introducing the {\it twisted exterior differential operator}
\begin{equation} \label{def:dtwisted}
{\rm d}_\theta \coloneqq {\rm d} -\theta\wedge \,\, ,
\end{equation}
from \eqref{def:lcs} it follows that ${\rm d}_\theta\omega=0$. Moreover, as $\theta$ is closed, it is immediate to check that ${\rm d}_\theta \circ {\rm d}_\theta = 0$ and so ${\rm d}_\theta$ defines a cohomology $H^*_{{\rm d}_\theta}(M,\mathbb{R})$ called {\it Morse-Novikov cohomology}. For later use, we prove the following

\begin{lemma}
For any vector fields $X,Y \in \mathfrak{X}(M)$, the following equalities hold true:
\begin{gather}
\mathcal{L}_X\omega = {\rm d}_\theta(X \lrcorner \omega) +\theta(X)\omega \,\, , \label{eq:Lomega} \\
{\rm d}_\theta(\omega(X,Y)) = X \lrcorner ({\rm d}_\theta(Y \lrcorner \omega)) -Y \lrcorner ({\rm d}_\theta(X \lrcorner \omega)) -[X,Y] \lrcorner\omega \,\, ,\label{eq:commutator}
\end{gather}
where $\lrcorner$ denotes the interior product $X \lrcorner \omega \coloneqq \omega(X,\_)$.
\end{lemma}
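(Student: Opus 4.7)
The plan is to derive both identities from Cartan's magic formula $\mathcal{L}_X = {\rm d}\circ(X\lrcorner\cdot) + (X\lrcorner\cdot)\circ {\rm d}$, the defining relation ${\rm d}\omega = \theta\wedge\omega$, and the standard commutator $[X,Y]\lrcorner\omega = \mathcal{L}_X(Y\lrcorner\omega) - Y\lrcorner\mathcal{L}_X\omega$ between Lie derivative and interior product. Both formulas will then follow purely algebraically from the definition ${\rm d}_\theta = {\rm d}-\theta\wedge$.

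For \eqref{eq:Lomega}, I would simply compute
\[
\mathcal{L}_X\omega = X\lrcorner {\rm d}\omega + {\rm d}(X\lrcorner\omega) = X\lrcorner(\theta\wedge\omega) + {\rm d}(X\lrcorner\omega).
\]
Expanding $X\lrcorner(\theta\wedge\omega) = \theta(X)\,\omega - \theta\wedge (X\lrcorner\omega)$ via the graded Leibniz rule for $\lrcorner$, and recognizing the combination ${\rm d}(X\lrcorner\omega) - \theta\wedge(X\lrcorner\omega) = {\rm d}_\theta(X\lrcorner\omega)$ from \eqref{def:dtwisted}, yields the claim at once.

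For \eqref{eq:commutator}, I would start from the commutator identity above. Applying Cartan's formula to $\mathcal{L}_X(Y\lrcorner\omega)$ and using $X\lrcorner Y\lrcorner\omega = \omega(Y,X) = -\omega(X,Y)$ gives
\[
\mathcal{L}_X(Y\lrcorner\omega) = -{\rm d}(\omega(X,Y)) + X\lrcorner {\rm d}(Y\lrcorner\omega),
\]
while inserting \eqref{eq:Lomega} into the second term produces $Y\lrcorner\mathcal{L}_X\omega = Y\lrcorner {\rm d}_\theta(X\lrcorner\omega) + \theta(X)(Y\lrcorner\omega)$. Subtracting and solving for ${\rm d}(\omega(X,Y))$ leaves an expression that still contains ${\rm d}$ rather than ${\rm d}_\theta$ on the right. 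To convert, I would use the auxiliary identity $X\lrcorner {\rm d}_\theta(Y\lrcorner\omega) = X\lrcorner {\rm d}(Y\lrcorner\omega) - \theta(X)(Y\lrcorner\omega) - \omega(X,Y)\,\theta$, where the final term comes from $\theta\wedge (X\lrcorner Y\lrcorner\omega) = -\omega(X,Y)\,\theta$ with the double contraction understood as a scalar. Substituting and finally subtracting $\omega(X,Y)\,\theta$ from both sides converts ${\rm d}(\omega(X,Y))$ into ${\rm d}_\theta(\omega(X,Y))$ on the left and cancels the stray twist terms on the right, producing exactly \eqref{eq:commutator}.

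The computation is entirely algebraic; no geometric input is used beyond the local identities between ${\rm d}$, $\mathcal{L}$, and $\lrcorner$, together with ${\rm d}\omega = \theta\wedge\omega$. The only real obstacle is bookkeeping: the $\theta(X)(Y\lrcorner\omega)$ and $\omega(X,Y)\,\theta$ correction terms generated each time one trades ${\rm d}$ for ${\rm d}_\theta$ must cancel precisely for the result to take the clean twisted-Cartan form \eqref{eq:commutator}.
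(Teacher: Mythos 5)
Your proposal is correct and follows essentially the same route as the paper: Cartan's formula plus ${\rm d}\omega=\theta\wedge\omega$ for \eqref{eq:Lomega}, and then the commutator identity $[X,Y]\lrcorner\omega=\mathcal{L}_X(Y\lrcorner\omega)-Y\lrcorner\mathcal{L}_X\omega$ combined with Cartan's formula, \eqref{eq:Lomega}, and the ${\rm d}\leftrightarrow{\rm d}_\theta$ conversion terms for \eqref{eq:commutator}. The sign bookkeeping (in particular $X\lrcorner(\theta\wedge(Y\lrcorner\omega))=\theta(X)(Y\lrcorner\omega)+\omega(X,Y)\theta$ and ${\rm d}_\theta f={\rm d}f-f\theta$) checks out, and the cancellations occur exactly as you describe.
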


\begin{proof}
Equation \eqref{eq:Lomega} is a direct application of the Cartan formula and the locally conformally symplectic condition ${\rm d}_{\theta}\omega=0$. Moreover, by \cite[Proposition I.3.10(b)]{KN1} and \eqref{eq:Lomega} we get
\begin{align*}
[X,Y] \lrcorner\omega &= [\mathcal{L}_X,Y\lrcorner](\omega) \\
&= {\rm d} (X \lrcorner (Y \lrcorner \omega)) +X\lrcorner({\rm d}(Y \lrcorner \omega)) -Y \lrcorner({\rm d}_\theta(X \lrcorner \omega)) -\theta(X)(Y \lrcorner\omega) \\
&= -{\rm d}_\theta(\omega(X,Y)) -\omega(X,Y)\theta +X \lrcorner ({\rm d}_\theta(Y \lrcorner \omega)) +X \lrcorner (\theta \wedge(Y \lrcorner \omega)) -Y \lrcorner({\rm d}_\theta(X \lrcorner \omega)) -\theta(X)(Y \lrcorner\omega) \\
&= -{\rm d}_\theta(\omega(X,Y)) +X \lrcorner ({\rm d}_\theta(Y \lrcorner \omega))  -Y \lrcorner({\rm d}_\theta(X \lrcorner \omega)) \,\, ,
\end{align*}
which concludes the proof.
\end{proof}

Symplectic structures are recovered for $\theta=0$. More generally, if $\theta$ is exact, then $\omega$ admits a global conformal change to a symplectic structure: in this case, we say that $\omega$ is {\it globally conformally symplectic}. In this paper, we will focus on {\it stricly locally conformally symplectic structures}, namely, the case when $\theta$ is non-exact. We refer to {\it e.g.\ }\cite{dragomir-ornea, bazzoni, bgp, ornea-verbitsky} and the references therein for an up-to-date account on locally conformally symplectic geometry. In the stricly locally conformally symplectic case, by \cite[Proposition 2.1]{vaisman85} we have the following

\begin{lemma} \label{lem:cohom0}
If $\theta$ is non-exact, then $H^0_{{\rm d}_\theta}(M,\mathbb{R}) = \{0\}$.
\end{lemma}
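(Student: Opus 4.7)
The plan is to unpack the cohomology definition and show that any $f \in \mathcal{C}^{\infty}(M,\mathbb{R})$ with ${\rm d}_{\theta}f = 0$ must be identically zero, by using a dichotomy: either $f$ vanishes somewhere (in which case a linear ODE argument forces $f \equiv 0$) or $f$ is nowhere zero (in which case $\theta$ would be exact, contradicting the hypothesis).

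First, by \eqref{def:dtwisted}, an element of $H^0_{{\rm d}_\theta}(M,\mathbb{R})$ is a function $f$ satisfying ${\rm d}f = f\theta$. Suppose there exists $x_0 \in M$ with $f(x_0) = 0$. For any smooth curve $\gamma : [0,1] \to M$ with $\gamma(0) = x_0$, the function $g(t) \coloneqq f(\gamma(t))$ satisfies the linear ODE $g'(t) = g(t)\,\theta(\gamma'(t))$ with $g(0) = 0$, hence $g \equiv 0$. Since $M$ is connected, any point is reachable by a smooth path from $x_0$, so $f \equiv 0$ on $M$.

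Now assume $f$ is nowhere zero on $M$. By continuity and connectedness, either $f > 0$ everywhere or $f < 0$ everywhere, so $|f|$ is a smooth positive function. The equation ${\rm d}f = f\theta$ then gives
\[
\theta = \frac{{\rm d}f}{f} = {\rm d}(\log|f|),
\]
which contradicts the hypothesis that $\theta$ is non-exact. Combining the two cases, $f \equiv 0$ and $H^0_{{\rm d}_\theta}(M,\mathbb{R}) = \{0\}$, as desired.

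The only subtle point is the path-connectedness argument: one must ensure that compactness and connectedness of $M$ imply that any two points can be joined by a smooth curve along which one can propagate the linear ODE. This is standard for smooth manifolds, so no serious obstacle arises. The proof is essentially a one-paragraph observation, and I do not expect any real difficulty beyond carefully invoking the non-exactness of $\theta$ at the right moment.
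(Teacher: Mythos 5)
Your proof is correct. Note that the paper does not actually prove this lemma: it simply quotes \cite[Proposition 2.1]{vaisman85}, so there is no in-paper argument to compare against. Your two-case argument --- propagating the linear ODE $g'(t)=\theta(\gamma'(t))\,g(t)$ from a zero of $f$ to conclude $f\equiv 0$, versus exhibiting $\theta={\rm d}(\log|f|)$ when $f$ is nowhere zero, contradicting non-exactness --- is precisely the standard proof of the cited fact and is complete. A small streamlining: the ODE step already shows that the zero set of $f$ is open; since it is also closed, connectedness of $M$ finishes the first case without having to join arbitrary pairs of points by smooth curves.
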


We recall (see {\it e.g.\ }\cite[Section 2]{gini-ornea-parton-piccinni} and \cite[Section 2.1]{bgp}) that the locally conformally symplectic manifold $(M,\omega)$ admits a unique, up to equivalence, {\it minimal symplectic covering}, that is the data of: \begin{itemize}
\item[$\bcdot$] a covering $\pi: \widehat{M} \to M$ with deck transformation group $\Gamma$;
\item[$\bcdot$] an injective group homomorphism $\rho: \Gamma \to (\mathbb{R},+)$ and a smooth function $f: \widehat{M} \to \mathbb{R}$ verifying
$$
f \circ \gamma - f = \rho(\gamma) \quad \text{ for any $\gamma \in \Gamma$} \,\, ;
$$
\item[$\bcdot$] a symplectic form $\widehat{\omega}_o$ on $\widehat{M}$ such that $\pi^*\omega = e^f\,\widehat{\omega}_o$.
\end{itemize}
In the following, we will denote the minimal symplectic covering of $(M,\omega)$ simply by $(\widehat{M},\widehat{\omega}_o)$. \smallskip

We recall that an {\it almost complex structure on $M$} is a section $J \in \mathcal{C}^{\infty}(M;{\rm End}(TM))$ such that $J^2 = -{\rm Id}$. Notice that $J$ acts on $T^*M$ by $J\varphi \coloneqq \varphi \circ J^{-1}$ and therefore it extends to the whole tensor bundle over $M$. By the Newlander-Nirenberg Theorem, it is known that $J$ is integrable, {\it i.e.\ }$M^{2n}$ admits a complex manifold structure that induces $J$, if and only if the Nijenhuis tensor of $J$ vanishes, {\it i.e.\ }
$$
N_J(X,Y) \coloneqq [JX,JY] -[X,Y] -J[JX,Y] -J[X,JY] = 0 \,\, .
$$
For the sake of notation, any integrable almost complex structure will just be called {\it complex structure}. An (almost) complex structure $J$ on $(M,\omega)$ is said to be {\it compatible with $\omega$} if
\begin{equation} \label{def:compatibilty}
\omega(J\_,J\_)=\omega \quad \text{ and } \quad \omega(\_,J\_)_x>0 \quad \text{ for any $x\in M$. }
\end{equation}
Any such (almost) complex structure induces a Riemannian metric $g_J \coloneqq \omega(\_,J\_)$ which is said to be {\it locally conformally (almost) K\"ahler}. Indeed, the pulled back metric $\pi^*g_J$ to the minimal symplectic covering of $(M,\omega)$ is globally conformal to the (almost) K\"ahler metric $\widehat{\omega}_o(\_,\pi^*J\_)$. We also remark that the Riemannian volume form on $M$ induced by $g_J$ coincides with the top exterior power $\omega^n$.

\subsection{Locally conformally K\"ahler metrics} \hfill \par

Let us fix a compatible complex structure $J$ on $(M,\omega)$ and denote by $g \coloneqq g_J$ the corresponding locally conformally K\"ahler metric. In the following, we will denote by $\_^{\sharp}$ and $\_^{\flat}$ the musical isomorphisms induced by $g$, {\it e.g.\ }
$$\begin{array}{ccccc}
q \in \mathcal{C}^{\infty}(M;S^2(TM)) &\mapsto &q^{\sharp} \in \mathcal{C}^{\infty}(M;{\rm Sym}(TM,g)) &\text{such that} &q(X,Y) = g(q^{\sharp}(X),Y) \,\, , \\[2pt]
\varphi \in \mathcal{C}^{\infty}(M;T^*M) &\mapsto &\varphi^{\sharp} \in \mathcal{C}^{\infty}(M;TM) &\text{such that} &\varphi(X) = g(\varphi^{\sharp},X) \,\, .
\end{array}$$
Moreover, given two vector bundles of tensors $E, F \to M$ and a linear differential operator $P: \mathcal{C}^{\infty}(M;E) \to \mathcal{C}^{\infty}(M;F)$, we denote by $P^*: \mathcal{C}^{\infty}(M;F) \to \mathcal{C}^{\infty}(M;E)$ its adjoint with respect to the $L^2$-pairing $\langle \_, \_ \rangle_{L^2(M,g)}$ induced by $g$. We denote by $D$ the {\it Levi-Civita connection} of $(M,g)$ and by
\begin{equation} \label{def:Riemcurv}
{\rm Rm}(X,Y) \coloneqq D_{[X,Y]}-[D_X,D_Y] \,\, , \quad {\rm Ric}(X,Y) \coloneqq {\rm tr}\, ( Z \mapsto {\rm Rm}(X,Z)Y )  \,\, , \quad {\rm scal} \coloneqq {\rm tr}\,({\rm Ric})^{\sharp}
\end{equation}
the {\it Riemannian curvature tensor}, the {\it Riemannian Ricci tensor} and the {\it Riemannian scalar curvature} of $(M,g)$, respectively. Furthermore, we denote by $\delta$ the {\it Riemannian divergence} acting on symmetric endomorphism fields of $(M,g)$, that can be locally written as \cite[Section 1.59]{besse}
$$
\delta \colon \mathcal{C}^{\infty}(M; {\rm Sym}(TM,g)) \to \mathcal{C}^{\infty}(M;T^*M) \,\, , \quad (\delta u)(X) \coloneqq -\sum_{\alpha=1}^{2n} g((D_{\tilde{e}_{\alpha}}u)(X),\tilde{e}_{\alpha}) \,\, ,
$$
where $(\tilde{e}_{\alpha})_{\alpha \in \{1,\ldots,2n\}}$ is any local $g$-orthonormal frame on $M$. 

The Levi-Civita connections of the local K\"ahler metrics induced by $g$ glue together to a globally defined connection $\nabla$, called the {\it Weyl connection} of $(M,J,g)$ \cite{tallini, vaisman76}. By \cite[Equation (2.4)]{vaisman76}, it has the following expression:
\begin{equation} \label{def:Weyl}
\nabla_X Y = D_XY - \tfrac12 A(X,Y) \,\, , \quad \text{ with } \quad A(X,Y) \coloneqq \theta(X)Y+\theta(Y)X-g(X,Y)\theta^{\sharp} \,\, .
\end{equation}
By the very definition and direct computations, it satisfies \cite[Theorem 2.2]{vaisman76}
\begin{equation} \label{eq:propertiesweyl}
T^\nabla=0 \,\, , \quad \nabla \omega =\theta\otimes \omega \,\, , \quad \nabla J=0 \,\, , \quad \nabla g=\theta\otimes g \,\, ,
\end{equation}
where $T^\nabla(X,Y) \coloneqq \nabla_XY- \nabla_YX - [X,Y]$ denotes the torsion tensor of $\nabla$. For later use, we prove the following

\begin{lemma}
Let $X \in \mathfrak{X}(M)$ be a vector field and $\zeta \in \mathcal{C}^{\infty}(M,T^*M)$ a 1-form. Then
\begin{gather} 
{\rm d}_{\theta}(X \lrcorner \omega) = \omega(\nabla_{\_} X,\_) +\omega(\_,\nabla_{\_} X) \,\, , \label{eq:holvf2} \\
\delta^*\zeta = \nabla(\zeta^{\sharp}) -\tfrac12(\_\, \lrcorner {\rm d}_\theta \zeta)^{\sharp} +\tfrac12g(\zeta,\theta){\rm Id} \,\, , \label{eq:delta*}
\end{gather}
where ${\rm d}_\theta$ is the twisted differential operator defined in \eqref{def:dtwisted}.
\end{lemma}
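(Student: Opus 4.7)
My plan is to treat the two identities separately; both reduce to direct computations involving \eqref{eq:Lomega} together with the defining properties \eqref{eq:propertiesweyl} of the Weyl connection $\nabla$.

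For \eqref{eq:holvf2}, I would start from \eqref{eq:Lomega} rewritten as ${\rm d}_\theta(X\lrcorner\omega) = \mathcal{L}_X\omega - \theta(X)\omega$, and expand the Lie derivative in the standard way as $\mathcal{L}_X\omega(Y,Z) = X(\omega(Y,Z)) - \omega([X,Y],Z) - \omega(Y,[X,Z])$. Using $\nabla\omega = \theta\otimes\omega$, the first term becomes $\theta(X)\omega(Y,Z) + \omega(\nabla_X Y, Z) + \omega(Y,\nabla_X Z)$, while the torsion-free condition $T^\nabla = 0$ lets me substitute $[X,Y] = \nabla_X Y - \nabla_Y X$ and $[X,Z] = \nabla_X Z - \nabla_Z X$. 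The $\nabla_X Y$ and $\nabla_X Z$ contributions cancel pairwise, and the $\theta(X)\omega(Y,Z)$ cancels the $-\theta(X)\omega(Y,Z)$ coming from \eqref{eq:Lomega}, leaving exactly $\omega(\nabla_Y X, Z) + \omega(Y,\nabla_Z X)$.

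For \eqref{eq:delta*}, the classical Riemannian fact (integration by parts against the Levi-Civita connection $D$) is that $\delta^{*}$ is the symmetrized covariant derivative:
$$
g(\delta^{*}\zeta(X), Y) = \tfrac{1}{2}\bigl(g(D_X\zeta^{\sharp}, Y) + g(D_Y\zeta^{\sharp}, X)\bigr).
$$
Substituting \eqref{def:Weyl} to convert $D$ into $\nabla$ yields
$$
D_X\zeta^{\sharp} = \nabla_X \zeta^{\sharp} + \tfrac12\bigl(\theta(X)\zeta^{\sharp} + g(\zeta,\theta)X - \zeta(X)\theta^{\sharp}\bigr),
$$
so that the symmetric-in-$(X,Y)$ part of this correction produces exactly the summand $\tfrac12 g(\zeta,\theta)\,{\rm Id}$ (the cross terms $\theta(X)\zeta(Y)$ and $\zeta(X)\theta(Y)$ pair up antisymmetrically and will be absorbed below).

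The remaining task is to match the symmetrization of $\nabla\zeta^{\sharp}$ with $\nabla\zeta^{\sharp} - \tfrac12(\_\lrcorner{\rm d}_\theta\zeta)^{\sharp}$. This reduces to the key identity
$$
g(\nabla_X\zeta^{\sharp}, Y) - g(\nabla_Y\zeta^{\sharp}, X) = {\rm d}_\theta\zeta(X,Y),
$$
which I would verify as follows: use $\nabla g = \theta\otimes g$ to move $\nabla_X$ past $g$, producing $g(\nabla_X\zeta^{\sharp},Y) = X(\zeta(Y)) - \zeta(\nabla_X Y) - \theta(X)\zeta(Y)$, then subtract the analogous expression with $X \leftrightarrow Y$ and invoke $T^\nabla = 0$ so that $\zeta(\nabla_X Y - \nabla_Y X) = \zeta([X,Y])$. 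The result is ${\rm d}\zeta(X,Y) - (\theta\wedge\zeta)(X,Y) = {\rm d}_\theta\zeta(X,Y)$. Substituting $\tfrac12(g(\nabla_X\zeta^{\sharp},Y) + g(\nabla_Y\zeta^{\sharp},X)) = g(\nabla_X\zeta^{\sharp},Y) - \tfrac12{\rm d}_\theta\zeta(X,Y)$ assembles the stated formula.

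The only difficulty I anticipate is bookkeeping: tracking signs and the $\tfrac12$ factor in the Weyl deformation tensor $A$, and consistently translating between the symmetric-endomorphism viewpoint of $\delta^{*}\zeta$ and the underlying $(0,2)$-tensor. No ingredient beyond \eqref{eq:Lomega} and the structure relations \eqref{eq:propertiesweyl} should be required.
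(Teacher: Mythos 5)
Your proposal is correct and follows essentially the same route as the paper: both identities are obtained by direct computation from the Weyl connection's structure relations \eqref{eq:propertiesweyl}, and for \eqref{eq:delta*} both arguments start from the symmetrized-covariant-derivative characterization of $\delta^*$ in Besse and then convert $D$ to $\nabla$ and ${\rm d}$ to ${\rm d}_\theta$ via \eqref{def:Weyl}. The only cosmetic differences are that for \eqref{eq:holvf2} you expand $\mathcal{L}_X\omega-\theta(X)\omega$ rather than ${\rm d}(X\lrcorner\omega)-\theta\wedge(X\lrcorner\omega)$, and for \eqref{eq:delta*} you pass to $\nabla$ before isolating the antisymmetric part, whereas the paper first records the intermediate formula $\delta^*\zeta = D(\zeta^{\sharp})-\tfrac12(\_\lrcorner{\rm d}\zeta)^{\sharp}$.
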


\begin{proof}
By \eqref{def:dtwisted} and \eqref{eq:propertiesweyl}, we get
\begin{align*}
{\rm d}_{\theta}(X \lrcorner \omega)(Y,Z) &= {\rm d}(X \lrcorner \omega)(Y,Z) -(\theta \wedge (X \lrcorner \omega))(Y,Z) \\
&= \mathcal{L}_Y\big(\omega(X,Z)\big) -\mathcal{L}_Z\big(\omega(X,Y)\big) -\omega(X,[Y,Z]) -\theta(Y)\omega(X,Z) +\theta(Z)\omega(X,Y) \\
&= \omega(\nabla_YX,Z) +\omega(Y,\nabla_ZX)
\end{align*}
and so \eqref{eq:holvf2} follows. Fix now a local $g$-orthonormal frame $(\tilde{e}_{\alpha})_{\alpha \in \{1,\ldots,2n\}}$ on $M$. By \cite[Lemma 1.60]{besse} (see also \cite[Errata at page 514]{besse}), we get
\begin{align*}
(\delta^*\zeta)(Y) &= \tfrac12\sum_{\alpha=1}^{2n} \big( g(D_Y(\zeta^{\sharp}),\tilde{e}_{\alpha}) +g(D_{\tilde{e}_{\alpha}}(\zeta^{\sharp}),Y)\big) \tilde{e}_{\alpha} \\
&= \tfrac12\sum_{\alpha=1}^{2n} \big( (D_Y\zeta)(\tilde{e}_{\alpha}) +\mathcal{L}_{\tilde{e}_{\alpha}}(\zeta(Y)) -\zeta(D_{\tilde{e}_{\alpha}}Y) \big) \tilde{e}_{\alpha} \\
&= \tfrac12\sum_{\alpha=1}^{2n} \big( (D_Y\zeta)(\tilde{e}_{\alpha}) +\mathcal{L}_{Y}(\zeta(\tilde{e}_{\alpha})) +\zeta([\tilde{e}_{\alpha},Y]) +{\rm d}\zeta(\tilde{e}_{\alpha},Y) -\zeta(D_{\tilde{e}_{\alpha}}Y) \big) \tilde{e}_{\alpha} \\
&= \tfrac12\sum_{\alpha=1}^{2n} \big( 2(D_Y\zeta)(\tilde{e}_{\alpha}) -{\rm d}\zeta(Y,\tilde{e}_{\alpha}) \big) \tilde{e}_{\alpha} \\
&= D_Y(\zeta^{\sharp}) -\tfrac12(Y\lrcorner {\rm d}\zeta)^{\sharp}
\end{align*}
and so, by \eqref{def:dtwisted} and \eqref{def:Weyl}, we get
\begin{align*}
(\delta^*\zeta)(Y) &= \nabla_Y(\zeta^{\sharp}) +\tfrac12\big(\theta(Y)\zeta^{\sharp} + \theta(\zeta^{\sharp})Y - \zeta(Y)\theta^{\sharp} \big) -\tfrac12(Y\lrcorner {\rm d}_{\theta}\zeta)^{\sharp} -\tfrac12\big(\theta(Y)\zeta^{\sharp} -\zeta(Y)\theta^{\sharp} \big) \\
&= \nabla_Y(\zeta^{\sharp}) -\tfrac12(Y\lrcorner {\rm d}_{\theta}\zeta)^{\sharp}
+\tfrac12g(\zeta,\theta)Y \,\, ,
\end{align*}
which concludes the proof of \eqref{eq:delta*}.
\end{proof}

We also denote by $\nabla^{\rm Ch}$ and ${\rm scal}^{\rm Ch}$ the {\it Chern connection} and the {\it Chern-scalar curvature} of $(M,J,g)$, respectively. For the convenience of the reader, we collected in Appendix \ref{appendixA} some properties of both the connections $\nabla$ and $\nabla^{\rm Ch}$ that will play a role afterwards. \smallskip

Finally, we recall that a (real) smooth vector field $X \in \mathfrak{X}(M)$ is said to be {\it $J$-holomorphic} if $\mathcal{L}_XJ=0$. Notice that, as $J$ is integrable, by \eqref{eq:propertiesweyl} we directly get

\begin{lemma} \label{lem:holvf1}
For any $X \in \mathfrak{X}(M)$ we have
\begin{equation}
\mathcal{L}_XJ = [J,\nabla X] \,\, , \quad \mathcal{L}_{JX}J = J \circ \mathcal{L}_XJ \,\, , \quad \mathcal{L}_XJ \circ J + J \circ \mathcal{L}_XJ = 0 \,\, . \label{eq:holvf1} \end{equation}
In particular, the following three conditions are equivalent: \begin{itemize}
\item[$\bcdot$] $X$ is $J$-holomorphic,
\item[$\bcdot$] $JX$ is $J$-holomorphic,
\item[$\bcdot$] $[J,\nabla X] = 0$.
\end{itemize}
\end{lemma}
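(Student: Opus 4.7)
The proof naturally splits into verifying the three identities in \eqref{eq:holvf1}, after which the equivalence of the three bulleted conditions will follow essentially as a formal corollary. My plan is to exploit the defining properties of the Weyl connection recalled in \eqref{eq:propertiesweyl}, namely that $\nabla$ is torsion-free and satisfies $\nabla J = 0$, together with the integrability of $J$. Throughout, I view $\nabla X$ as the endomorphism field $Y \mapsto \nabla_Y X$.

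For the first identity, I would compute directly by expanding
\[
(\mathcal{L}_X J)(Y) = [X, JY] - J\,[X, Y]
\]
and rewriting each bracket via the torsion-free formula $[U,V] = \nabla_U V - \nabla_V U$. Using $\nabla J = 0$ to slide $J$ past $\nabla_X$, the terms involving $\nabla_X$ cancel, and what remains is exactly $J\,\nabla_Y X - \nabla_{JY} X = [J,\nabla X](Y)$. This is a short, routine calculation.

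The third identity is immediate from the fact that $\mathcal{L}_X$ is a derivation of the tensor algebra: applying $\mathcal{L}_X$ to the relation $J\circ J = -{\rm Id}$ yields $(\mathcal{L}_X J)\circ J + J\circ(\mathcal{L}_X J) = 0$, requiring nothing beyond $J^2 = -{\rm Id}$. The second identity is the only step where integrability of $J$ is genuinely needed, and is the technical heart of the proof. The standard manipulation of the Nijenhuis tensor shows
\[
N_J(X,Y) = (\mathcal{L}_{JX} J)(Y) - J\,(\mathcal{L}_X J)(Y),
\]
so the Newlander--Nirenberg hypothesis $N_J \equiv 0$ yields $\mathcal{L}_{JX} J = J \circ \mathcal{L}_X J$. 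Verifying this identity is a matter of expanding both sides and regrouping brackets, but this is the one place where the integrability of $J$ has to be invoked carefully.

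Finally, the equivalence of the three bulleted conditions is read off from these identities. By definition, $X$ is $J$-holomorphic iff $\mathcal{L}_X J = 0$, which by the first identity is equivalent to $[J,\nabla X] = 0$, i.e.\ $\nabla X$ being $J$-invariant. By the second identity, combined with the invertibility of $J$, the vanishing of $\mathcal{L}_{JX} J$ is equivalent to the vanishing of $\mathcal{L}_X J$, so $JX$ is $J$-holomorphic iff $X$ is. No further difficulty should arise.
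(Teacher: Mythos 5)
Your proposal is correct and matches the paper's (unwritten) argument: the paper simply asserts the lemma as a direct consequence of the integrability of $J$ and the properties \eqref{eq:propertiesweyl} of the Weyl connection, and your computations — torsion-freeness plus $\nabla J=0$ for the first identity, the Nijenhuis identity $N_J(X,Y)=(\mathcal{L}_{JX}J)(Y)-J(\mathcal{L}_XJ)(Y)$ for the second, and differentiating $J^2=-{\rm Id}$ for the third — are exactly the standard details being elided. The deduction of the three equivalences from the identities is also as intended.
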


\medskip
\section{Twisted Hamiltonian diffeomorphisms and torus actions} \label{sect:twH-torus}
\setcounter{equation} 0

Let $(M^{2n},\omega)$ be a compact, connected, smooth, locally conformally symplectic manifold of real dimension $2n$ with non-exact Lee form $\theta$. We denote by $(\widehat{M},\widehat{\omega}_o)$ its minimal symplectic cover and by $\Gamma$ the deck transformation group of the projection $\pi: \widehat{M} \to M$. For the sake of notation, we denote by $\mathfrak{X}(\widehat{M})^{\Gamma}$ the closed subspace of $\Gamma$-invariant, smooth vector fields on the total space $\widehat{M}$ and by
\begin{equation} \label{def:simplecticLiealg} \begin{gathered}
\mathfrak{aut}(\widehat{M},\widehat{\omega}_o)^{\Gamma} \coloneqq \big\{ X \in \mathfrak{X}(\widehat{M})^{\Gamma} : \mathcal{L}_{X}\widehat{\omega}_o = 0 \big\} = \big\{ X \in \mathfrak{X}(\widehat{M})^{\Gamma} : {\rm d}(X \lrcorner \widehat{\omega}_o) = 0 \big\} \,\, ,\\
\mathfrak{ham}(\widehat{M},\widehat{\omega}_o)^{\Gamma} \coloneqq \big\{ X \in \mathfrak{X}(\widehat{M})^{\Gamma} : \text{$X \lrcorner \widehat{\omega}_o$ is ${\rm d}$-exact} \big\} 
\end{gathered} \end{equation}
the Lie algebra of $\Gamma$-invariant infinitesimal automorphisms of $(\widehat{M},\widehat{\omega}_o)$ and the Lie algebra of $\Gamma$-invariant Hamiltonian vector fields of $(\widehat{M},\widehat{\omega}_o)$, respectively.

\subsection{A remark on infinite dimensional transformation groups} \label{sect:defG1} \hfill \par

We denote by ${\rm Diff}(M)$ the smooth diffeomorphism group of $M$ endowed with the compact-open $\mathcal{C}^{\infty}$-topology \cite[Section 2.1]{Hirsch} and we recall that it has a natural structure of infinite dimensional {\it strong ILH-Lie group} in the sense of Omori \cite[Definition III.3.1]{omori-TranslAMS}, with Lie algebra isomorphic to $\mathfrak{X}(M)$ \cite[Theorem 2.1.5]{omori-LNM}.
Accordingly, we denote by
\begin{equation} \label{def:expDiff}
\exp: \mathfrak{X}(M) \to {\rm Diff}(M) \,\, , \quad \exp(X) \coloneqq \Theta^X_1
\end{equation}
its exponential map and by
\begin{equation} \label{def:AdDiff}
{\rm Ad}: {\rm Diff}(M) \times \mathfrak{X}(M) \to \mathfrak{X}(M)  \,\, , \quad ({\rm Ad}(\varphi)X)_x \coloneqq (\varphi_*X)_x = {\rm d}\varphi\,|_{\varphi^{-1}(x)}(X_{\varphi^{-1}(x)})
\end{equation}
its adjoint representation.
Here and in the following, we will not go into details of the theory of infinite-dimensional Lie groups. As a good reference, we recommend \cite{omori-LNM, omori-TranslAMS}.

We also recall that an {\it isotopy to the identity} is a smooth map
$$
\phi: [0,1] \times M \to M \quad \text{ such that } \quad \phi_t \coloneqq \phi(t,\_) \in {\rm Diff}(M) \text{ for any } t \in [0,1] \,\, , \quad \phi_0 = {\rm Id}
$$
and that $\varphi \in {\rm Diff}(M)$ is said to be {\it isotopic to the identity} if there exists an isotopy to the identity $(\phi_t)_{t \in [0,1]}$ such that $\phi_1 = \varphi$. Since ${\rm Diff}(M)$ is locally connected by smooth arcs, it follows that its identity component ${\rm Diff}(M)_0$ coincides with the subset of diffeomorphisms of $M$ that are isotopic to the identity (see {\it e.g.\ }\cite[Section I.1]{banyaga}). We also recall that there exists a bijective correspondence between the set of all the isotopies to the identity and the set of {\it time-dependent vector fields} on $M$, {\it i.e.\ }the smooth maps $Z: [0,1] \times M \to TM$ such that $Z_t|_x \coloneqq Z(t,x) \in T_xM$ for any $t \in [0,1]$, $x \in M$ (see {\it e.g.\ }\cite[Theorem 9.48, Exercise 9-20, Exercise 9-21]{Leebook}). This relation is explicitly given by the following system of ODE's:
\begin{equation} \label{eq:diffisotopy}
\bigg\{\!\! \begin{array}{l}
\tfrac{\rm d}{{\rm d} t} \phi_t = Z_t \circ \phi_t \\
\phi_0 = {\rm Id}
\end{array} \,\, .
\end{equation}

Let now $\mathfrak{g} \subset \mathfrak{X}(M)$ be a closed Lie subalgebra of vector fields on $M$. In general, it is not possible to find a strong ILH-Lie subgroup in the sense of Omori whose Lie algebra is $\mathfrak{g}$ (see \cite[Section III.5]{omori-TranslAMS}). However, this problem of integrability can be overcome by considering the weaker notion of {\it generalized Lie group} in the sense of Omori \cite[Definition I.3.1, Definition I.3.2]{omori-TranslAMS} in the following way. First of all, we call {\it $\mathfrak{g}$-isotopy to the identity} any isotopy to the identity $(\phi_t)_{t \in [0,1]}$ such that the corresponding time-dependent vector field $(Z_t)_{t \in [0,1]}$ verifies $Z_t \in \mathfrak{g}$ for any $t \in [0,1]$. Then, the following result holds true.

\begin{proposition} \label{prop:integralgroup}
Let $\mathfrak{g} \subset \mathfrak{X}(M)$ be a closed Lie subalgebra of vector fields on $M$ and assume that
\begin{equation} \label{eq:g-isotopy} \tag{$\triangle$}
{\rm Ad}(\phi_t)X \subset \mathfrak{g} \quad \text{ for any $\mathfrak{g}$-isotopy to the identity $(\phi_t)_{t \in [0,1]}$ }, \,\, \text{ for any $X \in \mathfrak{g}$ }.
\end{equation}
Then, the subset
\begin{equation} \label{def:G}
G \coloneqq \{\varphi \in {\rm Diff}(M) : \text{there exists a $\mathfrak{g}$-isotopy to the identity $(\phi_t)_{t \in [0,1]}$ such that $\phi_1 = \varphi$} \} \,\, .
\end{equation}
is a generalized Lie group in the sense of Omori, with Lie algebra $\mathfrak{g}$, whose exponential map and adjoint representation are given by the restrictions of \eqref{def:expDiff} and \eqref{def:AdDiff}, respectively.
\end{proposition}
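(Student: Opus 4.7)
The plan is to proceed in three stages: (1) verify that the subset $G$ defined in \eqref{def:G} is a subgroup of ${\rm Diff}(M)$; (2) identify the exponential map and adjoint representation of $G$ as the restrictions of \eqref{def:expDiff} and \eqref{def:AdDiff}; (3) upgrade the resulting algebraic subgroup to a generalized Lie group in the sense of Omori by transferring the ILH-structure from the ambient ${\rm Diff}(M)$.

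The algebraic heart of the argument lies in stage (1), and it is here that hypothesis $(\triangle)$ enters essentially. For closure under composition, given $\varphi_i \in G$ realized by $\mathfrak{g}$-isotopies $(\phi^{(i)}_t)_{t\in[0,1]}$ with time-dependent vector fields $Z^{(i)}_t \in \mathfrak{g}$ for $i=1,2$, I would consider $\psi_t := \phi^{(1)}_t \circ \phi^{(2)}_t$. Using the chain rule together with the ODE \eqref{eq:diffisotopy}, one obtains
\begin{equation*}
\tfrac{\rm d}{{\rm d}t} \psi_t = \bigl( Z^{(1)}_t + {\rm Ad}(\phi^{(1)}_t) Z^{(2)}_t \bigr) \circ \psi_t \,\, .
\end{equation*}
For each $t \in [0,1]$, the rescaled path $s \mapsto \phi^{(1)}_{st}$ is itself a $\mathfrak{g}$-isotopy to the identity with endpoint $\phi^{(1)}_t$, so condition $(\triangle)$ applied to $Z^{(2)}_t \in \mathfrak{g}$ ensures ${\rm Ad}(\phi^{(1)}_t) Z^{(2)}_t \in \mathfrak{g}$. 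Hence $\psi_t$ is a $\mathfrak{g}$-isotopy with $\psi_1 = \varphi_1 \circ \varphi_2$. For closure under inverse, given $\varphi = \phi_1 \in G$, the reversed path $\psi_s := \phi_{1-s} \circ \phi_1^{-1}$ satisfies $\psi_0 = {\rm Id}$, $\psi_1 = \varphi^{-1}$, and a direct computation shows it is driven by $-Z_{1-s} \in \mathfrak{g}$.

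Stage (2) is then nearly immediate. For $X \in \mathfrak{g}$, the constant time-dependent vector field $Z_t \equiv X$ yields the $\mathfrak{g}$-isotopy $\phi_t = \Theta^X_t$, whence $\exp(X) = \Theta^X_1 \in G$, so \eqref{def:expDiff} restricts to a map $\mathfrak{g} \to G$. The restriction of \eqref{def:AdDiff} to $G \times \mathfrak{g}$ takes values in $\mathfrak{g}$ precisely by hypothesis $(\triangle)$, applied to the $\mathfrak{g}$-isotopy whose time-$1$ endpoint is the given $\varphi \in G$.

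Stage (3) is expected to be the main obstacle and the most technical piece. Omori's axioms \cite[Definition I.3.1, Definition I.3.2]{omori-TranslAMS} for a generalized Lie group require local chart data at the identity and compatibility/smoothness properties of the group operations and of $\exp$ with respect to the ILH-structure. I would build these by restricting the corresponding data from ${\rm Diff}(M)$: since $\mathfrak{g}$ is by assumption a \emph{closed} Lie subalgebra of $\mathfrak{X}(M)$, the map $\exp|_{\mathfrak{g}}$ provides the desired chart at ${\rm Id} \in G$, and the group operations inherit the required smoothness from those on ${\rm Diff}(M)$. While the algebraic skeleton (closure under products and inverses, plus well-definedness of $\exp$ and ${\rm Ad}$) reduces cleanly to the ODE arguments of stages (1)--(2) empowered by $(\triangle)$, carrying this through all of Omori's ILH chart axioms is the technical bulk of the proof and the most likely source of difficulty.
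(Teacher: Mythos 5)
Your proposal is correct and follows essentially the same route as the paper: the key step in both is the composition formula $Z_t = Z^{(1)}_t + {\rm Ad}(\phi^{(1)}_t)Z^{(2)}_t$ for the time-dependent vector field of $\phi^{(1)}_t\circ\phi^{(2)}_t$, combined with hypothesis $(\triangle)$ to conclude that $G$ is a subgroup of ${\rm Diff}(M)_0$. The paper then simply declares the verification of Omori's axioms straightforward, so your additional explicit treatment of inverses, $\exp$, and ${\rm Ad}$ only fills in details the authors omit.
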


\begin{proof}
Let $\varphi^{(1)}, \varphi^{(2)} \in G$, take two $\mathfrak{g}$-isotopy to the identity $(\phi^{(i)}_t)_{t \in [0,1]}$ such that $\phi^{(i)}_1 = \varphi^{(i)}$ and denote by $(Z^{(i)})_{t \in [0,1]}$ the corresponding time-dependent vector field, with $i =1,2$. Then, it is immediate to observe that $(\phi_t)_{t \in [0,1]}$, with $\phi_t \coloneqq \phi^{(1)}_t \circ \phi^{(2)}_t$ for any $t \in [0,1]$, is an isotopy to the identity and that the corresponding time-dependent vector field $(Z_t)_{t \in [0,1]}$ is given by
\begin{equation} \label{eq:compisotopies}
Z_t = Z^{(1)}_t +{\rm Ad}(\phi^{(1)}_t)Z^{(2)}_t \,\, , \quad t \in [0,1] \,\, .
\end{equation}
By \eqref{eq:g-isotopy} and \eqref{eq:compisotopies}, it follows that $(\phi_t)_{t \in [0,1]}$ is a $\mathfrak{g}$-isotopy to the identity and so, since $\phi_1 = \varphi^{(1)} \circ \varphi^{(2)}$, it follows that $G$ is a subgroup of ${\rm Diff}(M)_0$. It is straightforward now to check that $G$ verifies all the properties listed in
\cite[Definition I.3.1, Definition I.3.2]{omori-TranslAMS}.
\end{proof}

According to Proposition \ref{prop:integralgroup}, we refer to the group $G$ defined in \eqref{def:G} as the {\it generalized Lie transformation group generated by $\mathfrak{g}$}.

\subsection{The twisted-Hamiltonian diffeomorphism group} \label{sect:defG2} \hfill \par

Following \cite{mamopi17, bgp}, we consider the set of {\it special conformal vector fields} of $(M,\omega)$
\begin{equation} \label{def:specconfaut}
\mathfrak{aut}^{\star}(M,[\omega]) \coloneqq \big\{ X \in \mathfrak{X}(M) : \mathcal{L}_X\omega = \theta(X)\omega \big\} \overset{\eqref{eq:Lomega}}{=} \big\{ X \in \mathfrak{X}(M) : {\rm d}_{\theta}(X \lrcorner \omega) = 0 \big\}
\end{equation}
and the subset of {\it twisted-Hamiltonian vector fields of $(M,\omega)$}
\begin{equation} \label{def:twham}
\mathfrak{ham}(M,[\omega]) \coloneqq \big\{ X \in \mathfrak{X}(M) : X \lrcorner \omega = {\rm d}_{\theta}h \text{ for some } h \in \mathcal{C}^{\infty}(M,\mathbb{R}) \big\} \,\, .
\end{equation}
By \eqref{eq:commutator}, it is easy to observe that both \eqref{def:specconfaut} and \eqref{def:twham} are closed Lie subalgebras of $\mathfrak{X}(M)$ and that 
$$
\big[\mathfrak{aut}^{\star}(M,[\omega]),\mathfrak{aut}^{\star}(M,[\omega])\big] \subset \mathfrak{ham}(M,[\omega]) \subset \mathfrak{aut}^{\star}(M,[\omega]) \,\, .
$$
Since $\omega$ is non degenerate, from Lemma \ref{lem:cohom0} it follows that there exists a unique linear isomorphism
\begin{equation} \label{eq:kappa}
\kappa: \mathfrak{ham}(M,[\omega]) \to \mathcal{C}^{\infty}(M,\mathbb{R}) \quad \text{ such that } \quad X \lrcorner \omega = {\rm d}_\theta (\kappa(X)) \,\, \text{ for any } X \in \mathfrak{ham}(M,[\omega]) \,\, .
\end{equation}
Moreover, by \eqref{eq:commutator}, the isomorphism $\kappa$ verifies
$$
\kappa([X,Y]) = -\omega(X,Y) \quad \text{ for any } X, Y \in \mathfrak{aut}^{\star}(M,[\omega]) \,\, .
$$
It is remarkable to notice that, by \cite{bgp}, the Lie algebras \eqref{def:specconfaut} and \eqref{def:twham} are related to the Lie algebras defined in \eqref{def:simplecticLiealg} by the following

\begin{proposition}[{\cite[Proposition 3.3, Corollary 3.10]{bgp}}] \label{prop:bgp}
The covering map $\pi: \widehat{M} \to M$ induces the following two Lie algebra isomorphisms:
$$
\pi_* : \mathfrak{aut}(\widehat{M},\widehat{\omega}_o)^{\Gamma} \to \mathfrak{aut}^{\star}(M,[\omega]) \,\, , \quad \pi_* : \mathfrak{ham}(\widehat{M},\widehat{\omega}_o)^{\Gamma} \to \mathfrak{ham}(M,[\omega]) \,\, ,
$$
where $\Gamma$ is the deck transformation group of $\pi$.
\end{proposition}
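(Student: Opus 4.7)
The plan is to proceed via the canonical lift. Because $\pi\colon\widehat{M}\to M$ is a covering map, each $X\in\mathfrak{X}(M)$ has a unique $\Gamma$-invariant lift $\widehat{X}\coloneqq\pi^*X\in\mathfrak{X}(\widehat{M})^{\Gamma}$, and $\pi_*$ is the inverse of this bijection. Since $\pi$ is a local diffeomorphism, this bijection is automatically a Lie algebra isomorphism $\mathfrak{X}(\widehat{M})^{\Gamma}\cong\mathfrak{X}(M)$, so it will suffice to verify that the subalgebras on each side correspond under this lift.

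The central computational input consists of two pointwise identities. First, differentiating $\pi^*\omega=e^w\widehat{\omega}_o$ and using that $\widehat{\omega}_o$ is symplectic gives $\pi^*\theta={\rm d}w$. Second, starting from $\widehat{X}\lrcorner\widehat{\omega}_o=e^{-w}\pi^*(X\lrcorner\omega)$, a direct computation produces
\begin{equation*}
{\rm d}(\widehat{X}\lrcorner\widehat{\omega}_o)=e^{-w}\,\pi^*\!\big({\rm d}_\theta(X\lrcorner\omega)\big).
\end{equation*}
Since $\pi$ is a local diffeomorphism, this identity together with \eqref{def:simplecticLiealg} and \eqref{def:specconfaut} immediately yields the first isomorphism $\pi_*\colon\mathfrak{aut}(\widehat{M},\widehat{\omega}_o)^{\Gamma}\to\mathfrak{aut}^{\star}(M,[\omega])$. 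Moreover, if $X\lrcorner\omega={\rm d}_{\theta}h$ for some $h\in\mathcal{C}^{\infty}(M,\mathbb{R})$, then recognising $e^{-w}\pi^*({\rm d}h-h\theta)={\rm d}(e^{-w}\pi^*h)$ yields $\widehat{X}\lrcorner\widehat{\omega}_o={\rm d}(e^{-w}\pi^*h)$, establishing the forward direction of the second isomorphism.

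The hard part is the converse for the Hamiltonian algebras. Given $\widehat{X}\lrcorner\widehat{\omega}_o={\rm d}F$ with $F\in\mathcal{C}^{\infty}(\widehat{M},\mathbb{R})$, I observe that $\gamma^*\widehat{\omega}_o=e^{-\rho(\gamma)}\widehat{\omega}_o$ (a consequence of $w\circ\gamma-w=\rho(\gamma)$) and $\Gamma$-invariance of $\widehat{X}$ force the $1$-form $\widehat{X}\lrcorner\widehat{\omega}_o$ to transform by the factor $e^{-\rho(\gamma)}$ under each $\gamma\in\Gamma$, so ${\rm d}(F\circ\gamma-e^{-\rho(\gamma)}F)=0$ and the difference is a constant $c(\gamma)\in\mathbb{R}$. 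The key step, and principal obstacle, is to correct $F$ by a single constant $a\in\mathbb{R}$ so that the resulting function becomes truly equivariant, i.e.\ $c(\gamma)=a(1-e^{-\rho(\gamma)})$ for all $\gamma$. Here the non-exactness hypothesis enters crucially: it guarantees $\Gamma\neq\{e\}$ and that $\rho\colon\Gamma\to(\mathbb{R},+)$ is injective, so $\Gamma$ is abelian as a subgroup of $\mathbb{R}$. Combined with the twisted cocycle identity $c(\gamma_1\gamma_2)=c(\gamma_1)+e^{-\rho(\gamma_1)}c(\gamma_2)$, commutativity forces $c(\gamma_1)(1-e^{-\rho(\gamma_2)})=c(\gamma_2)(1-e^{-\rho(\gamma_1)})$, so $a\coloneqq c(\gamma)/(1-e^{-\rho(\gamma)})$ is independent of $\gamma\neq e$. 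Replacing $F$ by $F-a$, the function $e^w(F-a)$ becomes $\Gamma$-invariant and descends to the required $h\in\mathcal{C}^{\infty}(M,\mathbb{R})$ satisfying $X\lrcorner\omega={\rm d}_{\theta}h$, completing the second isomorphism.
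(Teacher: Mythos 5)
Your argument is correct, and it is worth noting that the paper itself offers no proof of this statement at all: it is imported verbatim from \cite[Proposition 3.3, Corollary 3.10]{bgp}, so what you have written is a genuinely self-contained replacement for that citation. The two computational identities you isolate, namely $\pi^*\theta={\rm d}w$ and ${\rm d}(\widehat{X}\lrcorner\widehat{\omega}_o)=e^{-w}\pi^*({\rm d}_\theta(X\lrcorner\omega))$, are exactly the right bridge between the twisted calculus downstairs and the untwisted one upstairs, and they dispose of the $\mathfrak{aut}$-isomorphism and of the forward Hamiltonian inclusion in one line each. The only genuinely delicate point is the one you correctly identify as such: a primitive $F$ of $\widehat{X}\lrcorner\widehat{\omega}_o$ need not be $\rho$-equivariant a priori, and the obstruction is the twisted cocycle $c(\gamma_1\gamma_2)=c(\gamma_1)+e^{-\rho(\gamma_1)}c(\gamma_2)$; your observation that commutativity of $\Gamma$ (forced by the injectivity of $\rho$ into $(\mathbb{R},+)$) together with $\rho\not\equiv 0$ (forced by non-exactness of $\theta$) kills this cocycle by the explicit coboundary $a(1-e^{-\rho(\gamma)})$ is precisely the standard vanishing of $H^1(\Gamma,\mathbb{R}_{e^{-\rho}})$ for a nontrivial character of an abelian group, and the normalized function $e^w(F-a)$ then descends as claimed. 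Two small points you leave implicit but should state: the constancy of $F\circ\gamma-e^{-\rho(\gamma)}F$ uses connectedness of $\widehat M$ (which holds for the minimal covering), and the identification $\pi^*\theta={\rm d}w$ via cancellation of $\wedge\,\pi^*\omega$ requires $n\geq 2$, the only setting in which the Lee form is determined anyway. Neither affects the validity of the proof.
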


As a direct corollary of Proposition \ref{prop:bgp}, by using a lifting argument, one can prove the following

\begin{corollary} \label{cor:bgp}
Both the Lie algebras $\mathfrak{aut}^{\star}(M,[\omega])$ and $\mathfrak{ham}(M,[\omega])$ verify the property \eqref{eq:g-isotopy}.
\end{corollary}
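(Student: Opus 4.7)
The plan is to lift the entire picture to the minimal symplectic covering $\pi \colon \widehat{M} \to M$ and reduce to the analogous, classical statement for genuine symplectic and Hamiltonian vector fields, using Proposition \ref{prop:bgp} as the dictionary. Let $\mathfrak{g}$ denote either $\mathfrak{aut}^{\star}(M,[\omega])$ or $\mathfrak{ham}(M,[\omega])$, let $\widehat{\mathfrak{g}}$ denote its $\Gamma$-invariant symplectic counterpart on $\widehat{M}$, fix a $\mathfrak{g}$-isotopy to the identity $(\phi_t)_{t \in [0,1]}$ with time-dependent generator $(Z_t)$, and take $X \in \mathfrak{g}$. The goal is to show $(\phi_t)_* X \in \mathfrak{g}$ for every $t \in [0,1]$.

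First, since $\pi$ is a local diffeomorphism, I would lift $X$ and each $Z_t$ to $\Gamma$-invariant vector fields $\widehat{X}$, $\widehat{Z}_t$ on $\widehat{M}$; by Proposition \ref{prop:bgp} these lifts lie in $\widehat{\mathfrak{g}}$. Integrating $(\widehat{Z}_t)$ via the analog of \eqref{eq:diffisotopy} on $\widehat{M}$ yields an isotopy to the identity $(\widehat{\phi}_t)$ in ${\rm Diff}(\widehat{M})$ that projects onto $(\phi_t)$. Because $\widehat{Z}_t$ is $\Gamma$-invariant, uniqueness of solutions to the Cauchy problem forces each $\widehat{\phi}_t$ to be $\Gamma$-equivariant, i.e.\ $\widehat{\phi}_t \circ \gamma = \gamma \circ \widehat{\phi}_t$ for every $\gamma \in \Gamma$.

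Second, on the symplectic side, since $\widehat{Z}_t \in \mathfrak{aut}(\widehat{M},\widehat{\omega}_o)$ for every $t$, each $\widehat{\phi}_t$ is a symplectomorphism of $(\widehat{M},\widehat{\omega}_o)$, whence
\[
\mathcal{L}_{(\widehat{\phi}_t)_* \widehat{X}} \widehat{\omega}_o \,=\, (\widehat{\phi}_t)_* \mathcal{L}_{\widehat{X}} \widehat{\omega}_o \,=\, 0, \qquad (\widehat{\phi}_t)_* \widehat{X} \lrcorner \widehat{\omega}_o \,=\, (\widehat{\phi}_t)_* \bigl(\widehat{X} \lrcorner \widehat{\omega}_o\bigr).
\]
This shows that $(\widehat{\phi}_t)_* \widehat{X}$ is symplectic, and that in the Hamiltonian case, if $\widehat{X} \lrcorner \widehat{\omega}_o = {\rm d}\widehat{h}$, then $(\widehat{\phi}_t)_* \widehat{X} \lrcorner \widehat{\omega}_o = {\rm d}(\widehat{h} \circ \widehat{\phi}_t^{-1})$ is again exact. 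Combined with the $\Gamma$-equivariance of $\widehat{\phi}_t$ and the $\Gamma$-invariance of $\widehat{X}$, one gets $(\widehat{\phi}_t)_* \widehat{X} \in \widehat{\mathfrak{g}}$.

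Finally, I would project back: from $\pi \circ \widehat{\phi}_t = \phi_t \circ \pi$ it follows that $\pi_* \bigl((\widehat{\phi}_t)_* \widehat{X}\bigr) = (\phi_t)_* X$, and invoking Proposition \ref{prop:bgp} once more places $(\phi_t)_* X$ in $\mathfrak{g}$. The most delicate point to handle carefully is the integrability of the lifted time-dependent field $(\widehat{Z}_t)$ on the potentially noncompact cover $\widehat{M}$ for the full interval $[0,1]$; this is where compactness of $M$ is used, since the downstairs flow $\phi_t$ exists globally and the $\Gamma$-equivariant lift can be constructed by lifting locally on a fundamental domain and extending by deck transformations. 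Once this technical point is settled, every remaining step is a formal manipulation of Lie derivatives and pullbacks.
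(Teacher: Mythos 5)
Your argument is correct and is precisely the ``lifting argument'' that the paper itself only sketches when it states that Corollary \ref{cor:bgp} follows directly from Proposition \ref{prop:bgp}: lift the $\mathfrak{g}$-isotopy and the vector field to $\Gamma$-invariant data on $(\widehat{M},\widehat{\omega}_o)$, use the classical invariance of symplectic and Hamiltonian vector fields under symplectic isotopies, and project back via the isomorphisms $\pi_*$. The only technical point you flag --- global existence of the lifted isotopy --- is handled even more directly by the homotopy lifting property of the covering $\pi$, but your fundamental-domain construction works as well.
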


According to Proposition \ref{prop:integralgroup} and Corollary \ref{cor:bgp}, we denote by ${\rm Aut}^{\star}(M,[\omega])$ and ${\rm Ham}(M,[\omega])$ the ge\-ne\-ra\-lized Lie transformation groups generated by $\mathfrak{aut}^{\star}(M,[\omega])$ and $\mathfrak{ham}(M,[\omega])$, respectively. We call them {\it special conformal automorphism group} and {\it twisted-Hamiltonian diffeomorphism group of $(M,\omega)$}, respectively.

\subsection{A note on twisted-Hamiltonian group actions} \label{sect:actionK} \hfill \par

We denote by $V \in \mathfrak{X}(M)$ the {\it symplectic dual of $\theta$}, {\it i.e.\ }the unique vector field on $M$ defined by the condition $V \lrcorner \omega = \theta$. Notice that from \eqref{def:dtwisted} and the very definition, we get
\begin{equation} \label{eq:Lham}
V \lrcorner \omega = {\rm d}_{\theta}(-1) \,\, , \quad \theta(V)=0
\end{equation}
and therefore, by \eqref{eq:Lomega}, it follows that
\begin{equation} \label{eq:LVomega}
\mathcal{L}_V\omega = 0 \,\, .
\end{equation}

Let now $\mathsf{K}$ be a compact, connected Lie group that acts effectively on $M$ and $\mathfrak{k} \coloneqq {\rm Lie}(\mathsf{K}) \subset \mathfrak{X}(M)$ its Lie algebra. By compactness, $\mathsf{K}$ can be written as the product $\mathsf{K} = \mathsf{Z}(\mathsf{K}) \cdot [\mathsf{K},\mathsf{K}]$, where $\mathsf{Z}(\mathsf{K})$ and $[\mathsf{K},\mathsf{K}]$ denote the center and the commutator of $\mathsf{K}$, respectively. At the Lie algebra level, this decomposition corresponds to $\mathfrak{k} = \mathfrak{z}(\mathfrak{k}) \oplus [\mathfrak{k},\mathfrak{k}]$. We make the following assumptions on the group $\mathsf{K}$:
\begin{enumerate}[label={\rm (k1)}, leftmargin=30pt]
\item\label{(k1)} the $\mathsf{K}$-action preserves $\omega$;
\end{enumerate} \begin{enumerate}[label={\rm (k2)}, leftmargin=30pt]
\item\label{(k2)} the $\mathsf{K}$-action is of {\it Lee type}, {\it i.e.\ }$V \in \mathfrak{z}(\mathfrak{k})$;
\end{enumerate} \begin{enumerate}[label={\rm (k3)}, leftmargin=30pt]
\item\label{(k3)} the $\mathsf{K}$-action is {\it twisted-Hamiltonian}, {\it i.e.\ }$\mathsf{K} \subset {\rm Ham}(M,[\omega])$.
\end{enumerate}
Let us notice that condition \ref{(k1)} is not too restrictive, in view of the following

\begin{lemma}
Let $\mathsf{K}$ be a compact, connected Lie group acting effectively on $M$. If $M$ admits a locally conformally symplectic form, then it also admits a $\mathsf{K}$-invariant locally conformally symplectic form.
\end{lemma}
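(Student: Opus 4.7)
The plan is to use Haar averaging, preceded by a conformal gauge-fix that renders the Lee form $\mathsf{K}$-invariant before averaging the $2$-form itself.

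Let $\omega$ be the given lcs form with Lee form $\theta$. Since $\mathsf{K}$ is compact and connected, every element is isotopic to the identity, so the natural $\mathsf{K}$-action on $H^{1}_{dR}(M,\mathbb{R})$ is trivial. Hence the Haar average
\[
\bar\theta := \int_{\mathsf{K}} k^{*}\theta\, dk
\]
is a $\mathsf{K}$-invariant closed $1$-form in the same de~Rham class as $\theta$, so $\bar\theta - \theta = df$ for some $f \in \mathcal{C}^{\infty}(M,\mathbb{R})$. I then set $\omega_{1} := e^{f}\omega$: a direct Leibniz computation gives $d\omega_{1} = (df + \theta) \wedge \omega_{1} = \bar\theta \wedge \omega_{1}$, so $\omega_{1}$ is an lcs form with Lee form $\bar\theta$, and is still pointwise non-degenerate.

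Next, because $\bar\theta$ is $\mathsf{K}$-invariant, every translate $k^{*}\omega_{1}$ also satisfies $d(k^{*}\omega_{1}) = \bar\theta \wedge k^{*}\omega_{1}$; equivalently, the linear subspace $V_{\bar\theta} := \{\alpha \in \Omega^{2}(M) : d\alpha = \bar\theta \wedge \alpha\}$ of $\Omega^{2}(M)$ is $\mathsf{K}$-invariant, and by linearity the candidate
\[
\omega^{\mathsf{K}} := \int_{\mathsf{K}} k^{*}\omega_{1}\, dk
\]
is a $\mathsf{K}$-invariant element of $V_{\bar\theta}$; that is, $\omega^{\mathsf{K}}$ is $\mathsf{K}$-invariant and satisfies $d\omega^{\mathsf{K}} = \bar\theta \wedge \omega^{\mathsf{K}}$.

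The hard step, and the reason the bare averaging does not suffice, is the pointwise non-degeneracy of $\omega^{\mathsf{K}}$: in dimension $\geq 4$, the Haar average of a family of non-degenerate $2$-forms can become degenerate. My strategy to handle this would be to combine the averaging with a $\mathsf{K}$-invariant auxiliary structure. Concretely, fix a $\mathsf{K}$-invariant Riemannian metric $g$ on $M$ (obtained by averaging any metric), pick a compatible almost complex structure $J_{1}$ for $\omega_{1}$, and apply the standard polar-decomposition trick in the fibre-contractible bundle of $g$-orthogonal almost complex structures to produce a $\mathsf{K}$-invariant almost complex structure $\bar J$ associated to $J_{1}$. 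The delicate final step would then be to combine $\bar J$ with a suitable $\mathsf{K}$-invariant conformal factor so that the resulting $\mathsf{K}$-invariant non-degenerate $2$-form lies in the linear subspace $V_{\bar\theta}$, i.e.\ is lcs with the prescribed invariant Lee form $\bar\theta$; this reconciliation of the pointwise compatibility viewpoint with the linear constraint $d(\cdot) = \bar\theta \wedge (\cdot)$ is where I expect the argument to require the most care.
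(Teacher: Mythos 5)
Your gauge-fixing step is exactly the paper's first step: the paper likewise uses the triviality of the $\mathsf{K}$-action on de Rham cohomology to conformally change $\omega$ so that the Lee form becomes $\mathsf{K}$-invariant, and then appeals to ``standard averaging methods''. Your observation that, once $\bar\theta$ is invariant, the condition ${\rm d}\alpha=\bar\theta\wedge\alpha$ cuts out a $\mathsf{K}$-stable \emph{linear} subspace of $\Omega^2(M)$, so that the Haar average $\omega^{\mathsf{K}}$ of $\omega_1$ still satisfies it, is precisely the content of that phrase, spelled out in more detail than the paper gives. Up to the construction of $\omega^{\mathsf{K}}$ you have therefore reproduced the paper's argument.

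Where your proposal stops short is the non-degeneracy of $\omega^{\mathsf{K}}$, which the paper takes for granted and you rightly do not: for $2n\ge 4$ the non-degenerate $2$-forms on a vector space, even those inducing a fixed orientation, are not convex (e.g.\ on $\mathbb{R}^4$ the average of $e^1\wedge e^2+e^3\wedge e^4$ over the circle rotating the $(e_2,e_3)$-plane is zero), so the worry is genuine at the level of the linear isotropy representation at a fixed point. The trouble is that the repair you sketch does not close this gap. Averaging a metric and applying polar decomposition does produce a $\mathsf{K}$-invariant almost complex structure $\bar J$ and hence a $\mathsf{K}$-invariant non-degenerate $2$-form $\bar g(\bar J\cdot,\cdot)$, but nothing forces that form to satisfy ${\rm d}\alpha=\bar\theta\wedge\alpha$, and a conformal factor cannot arrange it: for a general non-degenerate $2$-form $\alpha$ the equation ${\rm d}\alpha=\eta\wedge\alpha$ with $\eta$ closed and cohomologous to $\bar\theta$ is a nontrivial constraint on $\alpha$ itself (in dimension $\ge 6$ the Lee form of $\alpha$, when it exists at all, is determined by $\alpha$; rescaling by $e^u$ only shifts it by ${\rm d}u$). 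So the step you defer as ``requiring the most care'' is in fact the entire remaining content of the lemma, and the route you indicate does not lead to it. To be fair, the paper's own one-line invocation of ``standard averaging methods'' does not address this point either; a complete treatment would either have to justify non-degeneracy of the average (for instance in the settings where the lemma is actually used, such as actions by biholomorphisms of an lcK structure, where one averages the positive-definite Hermitian metric rather than the $2$-form and convexity applies), or argue differently. As submitted, your proposal correctly identifies the difficulty but does not resolve it.
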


\begin{proof}
Fix $\mathsf{K}$ a locally conformally symplectic form on $M$ with Lee form $\theta$. Since the cohomology of $M$ is $\mathsf{K}$-invariant, see {\it e.g.} \cite[Proposition 1.28]{felix-oprea-tanre}, we can perform a conformal change in order to make $\theta$ $\mathsf{K}$-invariant. Then, the statement follows by standard averaging methods.
\end{proof}

Notice that, by condition \ref{(k2)}, the group $\mathsf{K}$ cannot be semisimple. For example, the standard actions of $\mathsf{SU}(2)$ on the Hirzebruch surfaces are not of Lee-type. A less trivial example of a non-Lee-type action is given by the homogenous action of $\mathsf{U}(2)$ on the linear Hopf manifold. Indeed one can directly check that the vector $V$ turns out to be tangent to the semisimple part of $\mathsf{U}(2)$, see \cite[Prop. 3.17]{angella-pediconi}.

\begin{remark} \label{rem:Lclosure}
Let $\mathsf{L}$ be the flow of $V$, {\it i.e.\ }the 1-parameter subgroup of ${\rm Diff}(M)$ defined as
\begin{equation}\label{eq:def-L}
\mathsf{L} \coloneqq \{\Theta^V_t : t \in \mathbb{R} \} \,\, .
\end{equation}
We notice that $\mathsf{L} \subset {\rm Ham}(M)$ thanks to \eqref{eq:Lham}.
Consider now its closure $\overline{\mathsf{L}}$ in ${\rm Diff}(M)$ with respect to the compact-open topology. Notice that, if there exists an almost complex structure $J$ compatible with $\omega$ such that
\begin{equation}\label{eq:LVJ}
\mathcal{L}_V J=0 \,\, ,
\end{equation}
then, by \eqref{eq:LVomega}, $V$ is a Killing vector field for the Riemannian metric $g_{J} = \omega(\_,J\_)$. Therefore, by the Myers-Steenrod Theorem, it follows that $\overline{\mathsf{L}} = \mathsf{T}$ is a torus, whose action on $M$ clearly satisfies conditions \ref{(k1)} and \ref{(k2)}.
We stress that, if $(M,\omega)$ admits a compatible complex structure $\hat J$ whose associated Riemannian metric is Vaisman, then condition \eqref{eq:LVJ} is automatically satisfied and thus it admits a torus action of Lee type.
On the other hand, in \cite{moroianu-moroianu-ornea}, the authors construct examples of compact locally conformally K\"ahler manifolds
satisfying \eqref{eq:LVJ} which are not Vaisman.
\end{remark}

Concerning Remark \ref{rem:Lclosure} it is worth mentioning that, in general, even when $\mathsf{T}=\overline{\mathsf{L}}$ is compact, it is not obvious that condition \ref{(k3)} is satisfied, {\it i.e.} that $\mathsf{T}$ is contained in ${\rm Ham}(M,[\omega])$. Indeed, by means of Proposition \ref{prop:bgp}, it follows that the special conformal automorphism group ${\rm Aut}^{\star}(M,[\omega])$ is closed in ${\rm Diff}(M)$ and so, since ${\rm d}_{\theta}(V \lrcorner \omega) = 0$, we get $\mathfrak{t} \coloneqq {\rm Lie}(\mathsf{T}) \subset \mathfrak{aut}^{\star}(M,[\omega])$. On the other hand, it is not known, to the best of our knowledge, whether the twisted-Hamiltonian diffeomorphism group ${\rm Ham}(M,[\omega])$ is closed or not, and therefore we do not know whether the inclusion $\mathfrak{t} \subset \mathfrak{ham}(M,[\omega])$ holds in general or not. We remark that the corresponding problem in the classical setting has been solved in \cite{ono}, where the author proved, as a corollary of his main theorem, that the Hamiltonian diffeomorphism group of a compact, connected symplectic manifold is closed in the diffeomorphism group with the $\mathcal C^\infty$-topology.

\medskip
\section{\texorpdfstring{$\mathsf{K}$}{K}-invariant, compatible complex structures} \label{sect:Jomega}
\setcounter{equation} 0

Let $(M^{2n},\omega)$ be a compact, connected, smooth, locally conformally symplectic manifold of real dimension $2n$ with non-exact Lee form $\theta$. Assume that $\mathsf{K}$ is a compact, connected Lie group that acts effectively on $M$ and verifies \ref{(k1)}, \ref{(k2)}, \ref{(k3)} as in Section \ref{sect:actionK}. In the following, for any vector bundle of tensors $E \to M$ over $M$, we denote by $\mathcal{C}^{\infty}(M;E)^{\mathsf{K}}$ the closed subspace of smooth sections of $E$ that are $\mathsf{K}$-invariant.

\subsection{The space of \texorpdfstring{$\mathsf{K}$}{K}-invariant, compatible almost complex structures} \hfill \par

Let us define
\begin{equation} \label{def:JalmomegaT}
\mathcal{J}_{\rm alm}(\omega)^{\mathsf{K}} \coloneqq \big\{ \text{ $J$ almost complex structure on $M$ compatible with $\omega$ as in \eqref{def:compatibilty} and $\mathsf{K}$-invariant } \big\} \,\, .
\end{equation}
Following \cite[Remark 4.3]{fujiki}, the space $\mathcal{J}_{\rm alm}(\omega)^{\mathsf{K}}$ has a natural structure of {\it smooth ILH manifold} in the sense of Omori \cite[Definition I.1.9]{omori-TranslAMS}, since it can be regarded as the space of $\mathsf{K}$-invariant smooth sections of a fibre bundle with typical fibre the Siegel upper half-space, see also \cite[Theorem A]{koiso}. Note indeed that, in the classical picture for almost K\"ahler structures, one does only need $\omega$ to be non-degenerate.

\begin{remark}
By Remark \ref{rem:Lclosure}, it follows that if the closure $\overline{\mathsf{L}}$ of the 1-parameter group generated by $V$ is non-compact, then the space of $\mathsf{\mathsf{L}}$-invariant almost complex structure on $M$ compatible with $\omega$ is empty.
\end{remark}

We recall now the characterization of the tangent space of $\mathcal{J}_{\rm alm}(\omega)^{\mathsf{K}}$. In the following, we denote by $\mathfrak{sp}(TM,\omega) \subset {\rm End}(TM)$ the subbundle defined fiber-wise by
$$
\mathfrak{sp}(T_xM,\omega_x) \coloneqq \big\{ a \in {\rm End}(T_xM) : \omega_x(a\_,\_)+\omega_x(\_,a\_) = 0 \big\} \,\, , \quad x \in M \,\, .
$$
 
\begin{lemma}
The tangent space of $\mathcal{J}_{\rm alm}(\omega)^{\mathsf{K}}$ at $J$ is given by
\begin{equation} \label{eq:tangent-space} \begin{aligned}
T_J\mathcal{J}_{\rm alm}(\omega)^{\mathsf{K}} &= \big\{ u \in \mathcal{C}^{\infty}(M; \mathfrak{sp}(TM,\omega))^{\mathsf{K}} : Ju+uJ=0 \big\} \\
&= \big\{ \hat{a}_J \coloneqq [J,a] : a \in \mathcal{C}^{\infty}(M;\mathfrak{sp}(TM,\omega))^{\mathsf{K}} \big\} \,\, .
\end{aligned} \end{equation}
\end{lemma}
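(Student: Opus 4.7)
My plan is to characterize $T_J\mathcal{J}_{\rm alm}(\omega)^{\mathsf{K}}$ by linearizing the defining constraints of the manifold $\mathcal{J}_{\rm alm}(\omega)^{\mathsf{K}}$, and then to identify the resulting linear space with the image of $a \mapsto [J,a]$ via an explicit algebraic splitting. Throughout, I will use two simple facts: first, $J$ itself lies in $\mathfrak{sp}(TM,\omega)$, because the compatibility $\omega(J\cdot,J\cdot)=\omega$ implies $\omega(JX,Y)+\omega(X,JY)=0$; second, $\mathfrak{sp}(TM,\omega)$ is a Lie subalgebra of $\mathrm{End}(TM)$ under the commutator bracket.

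\textbf{First characterization.} Given a smooth curve $(J_t)_{t}$ in $\mathcal{J}_{\rm alm}(\omega)^{\mathsf{K}}$ with $J_0=J$ and $u \coloneqq \frac{\rm d}{{\rm d}t}|_{t=0} J_t$, I differentiate the three pointwise conditions in \eqref{def:compatibilty} together with the $\mathsf{K}$-equivariance at $t=0$. The identity $J_t^2=-\mathrm{Id}$ gives $Ju+uJ=0$; the identity $\omega(J_t\cdot,J_t\cdot)=\omega$ gives $\omega(u\cdot,J\cdot)+\omega(J\cdot,u\cdot)=0$; positivity is an open condition and contributes no linear constraint; and $\mathsf{K}$-invariance of $u$ is inherited from the curve. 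Substituting $Y \mapsto JY$ in the second identity and using $uJ=-Ju$ together with $\omega(J\cdot,J\cdot)=\omega$ yields $\omega(uX,Y)+\omega(X,uY)=0$, i.e.\ $u \in \mathcal{C}^{\infty}(M;\mathfrak{sp}(TM,\omega))^{\mathsf{K}}$. Conversely, given such a $u$, I form the pointwise endomorphism $a \coloneqq -\tfrac{1}{2}Ju$. A short computation using $uJ+Ju=0$, $J \in \mathfrak{sp}(TM,\omega)$ and $u \in \mathfrak{sp}(TM,\omega)$ shows that $a \in \mathcal{C}^{\infty}(M;\mathfrak{sp}(TM,\omega))^{\mathsf{K}}$, and then the curve $J_t \coloneqq \exp(ta)\, J\, \exp(-ta)$ is pointwise well-defined, preserves $\omega$, squares to $-\mathrm{Id}$, is $\mathsf{K}$-invariant (since $a$ is), and satisfies the positivity condition for small $|t|$ by continuity; hence it is a curve in $\mathcal{J}_{\rm alm}(\omega)^{\mathsf{K}}$. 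Differentiating at $t=0$ gives $\dot J_0=[a,J]=-[J,a]$, and the relation $(Ju)J=-J(Ju)\cdot(-1)=u$ (from $uJ=-Ju$) yields $[J,a]=-u$, so $\dot J_0=u$.

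\textbf{Second characterization.} Since $J \in \mathfrak{sp}(TM,\omega)$ and $\mathfrak{sp}(TM,\omega)$ is closed under commutator, $[J,a] \in \mathfrak{sp}(TM,\omega)$ for every $a \in \mathfrak{sp}(TM,\omega)$; the identity $J[J,a]+[J,a]J = J^2 a - a J^2 = 0$ shows $[J,a]$ anticommutes with $J$; and $\mathsf{K}$-invariance is preserved because $J$ is $\mathsf{K}$-invariant. This establishes the inclusion $\{\hat a_J : a \in \mathcal{C}^{\infty}(M;\mathfrak{sp}(TM,\omega))^{\mathsf{K}}\} \subset T_J\mathcal{J}_{\rm alm}(\omega)^{\mathsf{K}}$. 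The reverse inclusion is witnessed by the same splitting as above: for $u \in T_J\mathcal{J}_{\rm alm}(\omega)^{\mathsf{K}}$ the choice $a = -\tfrac{1}{2}Ju \in \mathcal{C}^{\infty}(M;\mathfrak{sp}(TM,\omega))^{\mathsf{K}}$ satisfies $\hat a_J=[J,a]=u$.

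The main substantive point is verifying that $a = -\tfrac{1}{2}Ju$ actually lies in $\mathfrak{sp}(TM,\omega)$; this is a short algebraic check that crucially uses both $uJ+Ju=0$ and the fact that $J$ itself is $\omega$-skew. Everything else is bookkeeping: the openness of positivity justifies the passage from the infinitesimal to the finite picture, and $\mathsf{K}$-invariance propagates automatically through the construction because $J$ is $\mathsf{K}$-invariant and $\mathsf{K}$ acts by symplectic diffeomorphisms commuting with the exponential. The underlying ILH-manifold structure on $\mathcal{J}_{\rm alm}(\omega)^{\mathsf{K}}$ (so that the linearization indeed describes the tangent space) is the one provided by the Siegel-fibre construction recalled just before the statement.
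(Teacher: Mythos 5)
Your argument follows the paper's proof essentially verbatim: differentiate the pointwise identities $J_t^2=-{\rm Id}$ and $\omega(J_t\_,J_t\_)=\omega$ for the first inclusion, use the algebraic splitting $a=-\tfrac12 Ju$ for the second, and exhibit the conjugation curve generated by $a$ for surjectivity of $a\mapsto[J,a]$. The one flaw is a sign slip at the end of your first characterization: with $a=-\tfrac12 Ju$ one computes $[J,a]=Ja-aJ=-\tfrac12 J^2u+\tfrac12 JuJ=\tfrac12 u+\tfrac12 u=u$ (since $uJ=-Ju$ gives $JuJ=u$), not $[J,a]=-u$ as you assert; consequently your curve $J_t=\exp(ta)\,J\,\exp(-ta)$ has $\dot J_0=[a,J]=-u$ rather than $u$. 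This is harmless for the set equality being proved (replace $a$ by $-a$, or use the paper's convention $J_t=\exp(-ta)\,J\,\exp(ta)$, whose derivative at $t=0$ is $[J,a]=u$), but as written the last displayed chain of equalities in that step is inconsistent.
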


\begin{proof}
Let $(J_t)_t \subset \mathcal{J}_{\rm alm}(\omega)^{\mathsf{K}}$ be a curve starting at $J = J_0$ with initial tangent vector $u = J_0' \in T_J\mathcal{J}_{\rm alm}(\omega)^{\mathsf{K}}$. In particular, $J_t$ satisfies $J_t^2=-{\rm Id}$ and $\omega(J_t\_,J_t\_) = \omega$, and so
\begin{align*}
0 &= \tfrac{\rm d}{{\rm d}t} \,J_t^2\, \big|_{t=0} = J_0 J_0' +J_0' J_0 = Ju +uJ \,\, , \\
0 &= -\tfrac{\rm d}{{\rm d}t} \,\omega(J_t\_,J_tJ\_)\, \big|_{t=0} = \omega(u\_,\_)-\omega(J\_,uJ\_) = \omega(u\_,\_)+\omega(\_,u\_) \,\, ,
\end{align*}
which proves the first inclusion
$$
T_J\mathcal{J}_{\rm alm}(\omega)^{\mathsf{K}} \subset \big\{ u \in \mathcal{C}^{\infty}(M; \mathfrak{sp}(TM,\omega))^{\mathsf{K}} : Ju+uJ=0 \big\} \,\, .
$$
Notice now that any $u \in \mathcal{C}^{\infty}(M; \mathfrak{sp}(TM,\omega))$ satisfying $Ju+uJ=0$ can be written as
$$
u = \big[J, -\tfrac12Ju\big]
$$
with
$$
\omega(-\tfrac12Ju\_,\_)+\omega(\_,-\tfrac12Ju\_) = \tfrac12\omega(u\_,J\_) +\tfrac12\omega(\_,uJ\_) = 0 \,\, ,
$$
which proves the second inclusion
$$
\big\{ u \in \mathcal{C}^{\infty}(M; \mathfrak{sp}(TM,\omega))^{\mathsf{K}} : Ju+uJ=0 \big\} \subset \big\{ \hat{a}_J \coloneqq [J,a] : a \in \mathcal{C}^{\infty}(M;\mathfrak{sp}(TM,\omega))^{\mathsf{K}} \big\} \,\, .
$$
Finally, for any $a \in \mathcal{C}^{\infty}(M;\mathfrak{sp}(TM,\omega))^{\mathsf{K}}$, we define $J_t \coloneqq \exp(-ta) J \exp(ta)$ and we observe that $J_t \in \mathcal{J}_{\rm alm}(\omega)^{\mathsf{K}}$ for any $t \in \mathbb{R}$, $J_0 = J$ and $J_0' = [J,a] = \hat{a}_J$, which proves the remaining inclusion.
\end{proof}

As a consequence of \eqref{eq:tangent-space}, any $\mathsf{K}$-invariant tensor field $a \in \mathcal{C}^{\infty}(M;\mathfrak{sp}(TM,\omega))^{\mathsf{K}}$ determines a vector field on $\mathcal{J}_{\rm alm}(\omega)^{\mathsf{K}}$ by 
\begin{equation} \label{def:basicvf}
\hat{a} \in \mathcal{C}^{\infty}\big(\mathcal{J}_{\rm alm}(\omega)^{\mathsf{K}}; T\mathcal{J}_{\rm alm}(\omega)^{\mathsf{K}}\big) \,\, , \quad \hat{a}_J \coloneqq [J,a] \,\, .
\end{equation}
Any vector field of the form \eqref{def:basicvf} will be called {\it basic}. As in \cite[Equation 9.2.7]{gauduchon-book}, we prove the following

\begin{lemma}
The Lie bracket of two basic vector fields $\hat{a}, \hat{b}$ on $\mathcal{J}_{\rm alm}(\omega)^{\mathsf{K}}$ is basic and given by
\begin{equation} \label{eq:liebr}
[\hat{a},\hat{b}] = \widehat{[a,b]} \,\, , \quad \text{ with } \,\, [a,b] = ab-ba \,\, .
\end{equation}
\end{lemma}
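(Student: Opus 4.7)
The plan is to compute $[\hat a, \hat b]$ using the natural affine structure on $\mathcal{J}_{\rm alm}(\omega)^{\mathsf{K}}$ inherited from the ambient vector space of $\mathsf{K}$-invariant sections of ${\rm End}(TM)$. In this affine chart, the tangent vectors at every point of $\mathcal{J}_{\rm alm}(\omega)^{\mathsf{K}}$ are themselves sections of a fixed vector bundle, so the Lie bracket of two vector fields $X, Y$ on $\mathcal{J}_{\rm alm}(\omega)^{\mathsf{K}}$ is given by the usual formula
\[
[X,Y]_J = (\mathrm{d} Y)_J (X_J) - (\mathrm{d} X)_J (Y_J),
\]
where $\mathrm{d} X$, $\mathrm{d} Y$ denote the ordinary directional derivatives of $X, Y$ as maps from $\mathcal{J}_{\rm alm}(\omega)^{\mathsf{K}}$ into the space of $\mathsf{K}$-invariant sections of ${\rm End}(TM)$.

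First I would verify that the formula $\widehat{[a,b]}_J := [J,[a,b]]$ indeed defines a basic vector field, that is, $[a,b] \in \mathcal{C}^{\infty}(M;\mathfrak{sp}(TM,\omega))^{\mathsf{K}}$. This is immediate: $\mathfrak{sp}(TM,\omega)$ is a Lie subalgebra of ${\rm End}(TM)$ under the commutator, and $\mathsf{K}$-invariance is preserved by the bracket since the $\mathsf{K}$-action acts by Lie algebra automorphisms on ${\rm End}(TM)$. Next I would compute the differential of the basic vector field $\hat a \colon J \mapsto [J,a] = Ja - aJ$. Since $a$ does not depend on $J$, this map is affine-linear in $J$, so for any tangent vector $u \in T_J \mathcal{J}_{\rm alm}(\omega)^{\mathsf{K}}$ one has
\[
(\mathrm{d} \hat a)_J (u) = ua - au = [u,a].
\]

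Plugging in $u = \hat b_J = [J,b]$ and doing the same with $\hat a$ and $\hat b$ swapped, I would obtain
\[
[\hat a, \hat b]_J = [\,[J,b]\,,\,a\,] - [\,[J,a]\,,\,b\,].
\]
The final step is a single application of the Jacobi identity in the associative algebra ${\rm End}(T_xM)$ with commutator bracket:
\[
[[J,a],b] + [[a,b],J] + [[b,J],a] = 0,
\]
which yields $[\,[J,b]\,,\,a\,] - [\,[J,a]\,,\,b\,] = [J,[a,b]] = \widehat{[a,b]}_J$, proving the claim. I do not expect any serious obstacle here beyond a careful bookkeeping of signs, since the subtlety (extending the computation to the infinite-dimensional ILH setting) is already absorbed by the fact that $\mathcal{J}_{\rm alm}(\omega)^{\mathsf{K}}$ is realized as a smooth submanifold of an affine space and that the vector fields in question are pointwise algebraic expressions in $J$.
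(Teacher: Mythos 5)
Your approach is correct in substance but genuinely different from the paper's. The paper computes the bracket dynamically, as $\tfrac{\partial^2}{\partial t\,\partial s}\big(\Theta^{\hat a}_{-t}\circ\Theta^{\hat b}_{s}\circ\Theta^{\hat a}_{t}\big)(J)\big|_{(t,s)=(0,0)}$ using the explicit flow $\Theta^{\hat a}_t(J)=\exp(-ta)\,J\exp(ta)$ and expanding the conjugations; you instead use the flat-connection formula $[X,Y]_J=(\mathrm{d}Y)_J(X_J)-(\mathrm{d}X)_J(Y_J)$, available because $\hat a_J=[J,a]$ is the restriction of an affine map of the ambient space of $\mathsf K$-invariant sections of ${\rm End}(TM)$. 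Both are legitimate; yours trades the explicit flow for a single application of the Jacobi identity. (Strictly speaking, since $\mathcal J_{\rm alm}(\omega)^{\mathsf K}$ is a submanifold rather than an open subset of that affine space, you should add the standard remark that the bracket of the ambient extensions, which are tangent to the submanifold along it, restricts to the intrinsic bracket.) One bookkeeping issue: applying your own formula with $X=\hat a$, $Y=\hat b$ and $(\mathrm{d}\hat a)_J(u)=[u,a]$ gives
\[
[\hat a,\hat b]_J=(\mathrm{d}\hat b)_J(\hat a_J)-(\mathrm{d}\hat a)_J(\hat b_J)=[[J,a],b]-[[J,b],a]\,,
\]
which by the Jacobi identity equals $-[[a,b],J]=[J,[a,b]]$, as desired. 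Your displayed intermediate expression $[[J,b],a]-[[J,a],b]$ has the two terms in the wrong order and in fact equals $-[J,[a,b]]$; the asserted equality of that expression with $[J,[a,b]]$ then conceals a second, compensating sign slip in the Jacobi step. The final statement you reach is correct, but each of the two intermediate steps is off by a sign and should be fixed.
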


\begin{proof}
Take $a, b \in \mathcal{C}^{\infty}(M;\mathfrak{sp}(TM,\omega))^{\mathsf{K}}$ and notice that the flow of the basic vector field $\hat{a}$ is
$$
\Theta^{\hat{a}}: \mathbb{R} \times \mathcal{J}_{\rm alm}(\omega)^{\mathsf{K}} \to \mathcal{J}_{\rm alm}(\omega)^{\mathsf{K}} \,\, , \quad \Theta^{\hat{a}}_t(J) = \exp(-ta) J \exp(ta) \,\, .
$$
Therefore, for any $J \in \mathcal{J}_{\rm alm}(\omega)^{\mathsf{K}}$ we get
\begin{align*}
[\hat{a},\hat{b}]_J &= \tfrac{\partial^2}{\partial t \, \partial s} \big(\Theta^{\hat{a}}_{-t} \circ \Theta^{\hat{b}}_{s} \circ \Theta^{\hat{a}}_{t}\big)(J) \, \big|_{(t,s)=(0,0)} \\
&= \tfrac{\partial^2}{\partial t \, \partial s} \exp(ta)\exp(-sb)\exp(-ta)J\exp(ta)\exp(sb)\exp(-ta) \, \big|_{(t,s)=(0,0)} \\
&= [J,ab-ba]
\end{align*}
which proves \eqref{eq:liebr}.
\end{proof}

Following \cite[page 179]{fujiki} (see also \cite[page 227]{gauduchon-book}), we define the {\it tautological complex structure $\mathbb{J}$} and the {\it tautological symplectic structure $\mathbb{\Omega}$} by setting
\begin{align}
\mathbb{J} \in \mathcal{C}^{\infty}\big(\mathcal{J}_{\rm alm}(\omega)^{\mathsf{K}}; {\rm End} (T\mathcal{J}_{\rm alm}(\omega)^{\mathsf{K}})\big) \,\, &, \quad \mathbb{J}_J \coloneqq J \,\, , \nonumber \\
\mathbb{G} \in \mathcal{C}^{\infty}\big(\mathcal{J}_{\rm alm}(\omega)^{\mathsf{K}};S^2_+(T^*\mathcal{J}_{\rm alm}(\omega)^{\mathsf{K}})\big) \,\, &, \quad \mathbb{G}_J(u,v) \coloneqq \langle u, v \rangle_{L^2(M,g_J)} = \int_M {\rm tr}(uv)\, \omega^n \,\, ,  \label{eq:tautK} \\
\mathbb{\Omega} \in \mathcal{C}^{\infty}\big(\mathcal{J}_{\rm alm}(\omega)^{\mathsf{K}};\Lambda^2(T^*\mathcal{J}_{\rm alm}(\omega)^{\mathsf{K}})\big) \,\, &, \quad \mathbb{\Omega}_J(u,v) \coloneqq \mathbb{G}_J(\mathbb J_Ju,v) = \int_M {\rm tr}(Juv)\, \omega^n \,\, . \nonumber
\end{align}
Here, $\langle \_, \_ \rangle_{L^2(M,g_J)}$ denotes the standard $L^2$-pairing  induced by $g_J$. We refer to the triple $(\mathbb{J}, \mathbb{G}, \mathbb{\Omega})$ as the {\it tautological K\"ahler structure of $\mathcal{J}_{\rm alm}(\omega)^{\mathsf{K}}$}. This nomenclature is due to the following result (compare with \cite[Theorem 4.2]{fujiki}).

\begin{proposition} \label{prop:almkahler}
The tautological K\"ahler structure $(\mathbb{J}, \mathbb{G}, \mathbb{\Omega})$ defined in \eqref{eq:tautK} is a K\"ahler structure on the ILH manifold $\mathcal{J}_{\rm alm}(\omega)^{\mathsf{K}}$ with respect to which the basic vector fields \eqref{def:basicvf} are holomorphic Killing.
\end{proposition}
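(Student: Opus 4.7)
The plan is to establish the three constituent properties of a K\"ahler structure — almost Hermitian compatibility, integrability of $\mathbb{J}$, and closedness of $\mathbb{\Omega}$ — and then check the holomorphic Killing property of basic vector fields. The $\mathsf{K}$-invariance plays no essential role beyond restricting to the appropriate subspace, since every piece of the construction is pointwise tensorial and $\mathsf{K}$-equivariant, so the whole argument runs parallel to Fujiki's for $\mathcal{J}_{\rm alm}(\omega)$.

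First I would verify the almost Hermitian compatibility. Given $u \in T_J \mathcal{J}_{\rm alm}(\omega)^{\mathsf{K}}$, a direct computation using the characterization \eqref{eq:tangent-space} (i.e.\ $Ju+uJ=0$ and $u \in \mathfrak{sp}(TM,\omega)$) and the $J$-invariance of $\omega$ shows that $\mathbb{J}_J u \coloneqq Ju$ again lies in $T_J\mathcal{J}_{\rm alm}(\omega)^{\mathsf{K}}$, with $\mathbb{J}_J^2 = -{\rm Id}$. Since tangent vectors $u$ turn out to be $g_J$-symmetric (from the sp-condition and $g_J = \omega(\_,J\_)$), the integrand $\mathrm{tr}(u^2)$ equals the pointwise squared norm of $u$, whence $\mathbb{G}$ is positive definite. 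The trace identity $\mathrm{tr}(Ju\,Jv) = \mathrm{tr}(uv)$ (which follows from $Ju=-uJ$) gives $\mathbb{J}$-invariance of $\mathbb{G}$, and hence antisymmetry of $\mathbb{\Omega} = \mathbb{G}(\mathbb{J}\_,\_)$.

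Next I would address integrability of $\mathbb{J}$ and closedness of $\mathbb{\Omega}$ by reducing to the fibrewise picture. Concretely, $\mathcal{J}_{\rm alm}(\omega)^{\mathsf{K}}$ is the space of $\mathsf{K}$-invariant sections of a fibre bundle $\mathcal{Z} \to M$ whose fibre at $x$ is the Siegel upper half-space ${\rm Sp}(T_xM,\omega_x)/{\rm U}(T_xM,\omega_x,J_x)$, a Hermitian symmetric space with canonical K\"ahler form $(u,v) \mapsto \mathrm{tr}(J_x u v)$. The tautological triple $(\mathbb{J},\mathbb{G},\mathbb{\Omega})$ is simply the integration of this fibrewise K\"ahler structure against the volume form $\omega^n$. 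Integrability of $\mathbb{J}$ reduces to the vanishing of the Nijenhuis tensor of the fibrewise $J$, a standard property of the Siegel domain. For $d\mathbb{\Omega}=0$, I would use the Cartan-type formula on three basic vector fields $\hat{a},\hat{b},\hat{c}$ together with the bracket identity \eqref{eq:liebr}: both the terms $\hat{a}_J\!\cdot\!\mathbb{\Omega}(\hat{b},\hat{c})$ and the terms $\mathbb{\Omega}([\hat{a},\hat{b}],\hat{c})$ are integrals over $M$ of algebraic expressions in $J$ and the $a_i$'s, and Cartan's formula for the section-space form equals the integral against $\omega^n$ of the corresponding Cartan formula on the fibres — which vanishes by the K\"ahlerness of the Siegel domain.

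Finally, to prove that basic vector fields are holomorphic Killing, I would write the flow explicitly as $\Psi^{\hat{a}}_t(J) = e^{-ta} J e^{ta}$, with tangent action $(d\Psi^{\hat{a}}_t)_J(u) = e^{-ta} u e^{ta}$. Since $e^{ta}$ acts fibrewise on $TM$ and not on $M$ itself, the top form $\omega^n$ is untouched, so conjugation-invariance of trace together with $\mathbb{J}_{\Psi^{\hat{a}}_t(J)} = e^{-ta} J e^{ta}$ immediately gives $(\Psi^{\hat{a}}_t)^*\mathbb{J} = \mathbb{J}$ and $(\Psi^{\hat{a}}_t)^*\mathbb{G} = \mathbb{G}$, hence $\mathcal{L}_{\hat{a}}\mathbb{J} = 0$ and $\mathcal{L}_{\hat{a}}\mathbb{G} = 0$. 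The main obstacle is the closedness of $\mathbb{\Omega}$: in the infinite-dimensional setting one must justify interchanging the exterior derivative with integration over $M$, but since basic vector fields span each tangent space and their brackets stay basic, the identity reduces cleanly to its pointwise (finite-dimensional) counterpart on the Siegel upper half-space.
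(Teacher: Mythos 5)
Your proposal is correct, but it is organized differently from the paper's proof. The paper works entirely on the section space by hand: it computes the flows $\Theta^{\hat a}_t(J)=\exp(-ta)J\exp(ta)$ and $\Theta^{\mathbb J\hat a}_t(J)=\exp(-tJa)J\exp(tJa)$, derives the bracket identities $[\hat a,\mathbb J\hat b]=\mathbb J[\hat a,\hat b]$ (hence $\mathcal L_{\hat a}\mathbb J=\mathcal L_{\mathbb J\hat a}\mathbb J=0$, so $N_{\mathbb J}=0$), and then checks $\mathcal L_{\hat a}\mathbb\Omega=0$ and ${\rm d}\mathbb\Omega=0$ via the trace identity ${\rm tr}(J[J,a][J,b])=2\,{\rm tr}(J[a,b])$ and the Jacobi identity under the trace. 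You instead recognize $(\mathbb J,\mathbb G,\mathbb\Omega)$ as the fibrewise integration of the canonical K\"ahler structure of the Siegel upper half-space and push integrability and closedness down to the finite-dimensional fibre; your verification of the holomorphic Killing property via the explicit flow preserving both $\mathbb J$ and $\mathbb G$ (using that conjugation acts fibrewise and leaves $\omega^n$ untouched) is arguably cleaner than the paper's Lie-derivative computations, and even gives $\mathcal L_{\hat a}\mathbb\Omega=0$ for free. What your route loses in self-containedness it gains in conceptual clarity; what the paper's route buys is that every identity is exhibited explicitly. One point you should make explicit: in this infinite-dimensional ILH setting the implication ``$N_{\mathbb J}=0$ (or fibrewise integrability) $\Rightarrow$ $\mathbb J$ integrable'' is not Newlander--Nirenberg and must be justified; both your reduction to the fibre and the paper's argument ultimately rest on Koiso's results for manifolds of sections (the paper cites \cite[Proposition 1.4]{koiso} precisely here), so you should cite that, or Koiso's Theorem A, at that step.
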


\begin{proof}
By the very definitions, it is straightforward to realize that $(\mathbb{J}_J,\mathbb{G}_J)$ is a linear Hermitian structure on $T_J\mathcal{J}_{\rm alm}(\omega)^{\mathsf{K}}$ for any $J \in \mathcal{J}_{\rm alm}(\omega)^{\mathsf{K}}$. Since the flow $\Theta^{\mathbb{J}\hat{a}}$ of the vector field $\mathbb{J}\hat{a}$ is given by
$$
\Theta^{\mathbb{J}\hat{a}}_t(J) = \exp(-tJa) J \exp(tJa) \,\, ,
$$
a direct computation shows that for any $J \in \mathcal{J}_{\rm alm}(\omega)^{\mathsf{K}}$
\begin{align*}
[\hat{a},\mathbb{J}\hat{b}]_J &= \tfrac{\partial^2}{\partial t \, \partial s} \big(\Theta^{\hat{a}}_{-t} \circ \Theta^{\mathbb{J}\hat{b}}_{s} \circ \Theta^{\hat{a}}_{t}\big)(J) \, \big|_{(t,s)=(0,0)} \\
&= \tfrac{\partial^2}{\partial t \, \partial s} \exp(ta)\exp\!\big({-}s\exp({-}ta)J\exp(ta)b\big)\exp({-}ta) \cdot J \cdot \\
&\qquad\qquad\qquad\qquad \exp(ta)\exp\!\big(s\exp({-}ta)J\exp(ta)b\big)\exp({-}ta) \, \big|_{(t,s)=(0,0)} \\
&= \tfrac{\partial}{\partial t} \big( {-}J\exp(ta)b\exp({-}ta)J {-}\exp(ta)b\exp({-}ta) \big) \, \big|_{t=0} \\
&= -JabJ +JbaJ -ab +ba \\
&= J[J,[a,b]] \\
&= \big(\mathbb{J}[\hat{a},\hat{b}]\big)_J
\end{align*}
and so, by \eqref{eq:tangent-space}, it follows that
\begin{equation} \label{eq:almkahlerdim(1)}
\mathcal{L}_{\hat{a}}\mathbb{J} = 0 \,\, .
\end{equation}
An analogous computation shows that $\mathcal{L}_{\mathbb{J}\hat{a}}\mathbb{J} = 0$ and so
\begin{equation} \label{eq:almkahlerdim(2)}
N_{\mathbb{J}}(\hat{a},\hat{b}) \coloneqq
[\mathbb{J}\hat{a},\mathbb{J}\hat{b}] -[\hat{a},\hat{b}] -\mathbb{J}[\mathbb{J}\hat{a},\hat{b}] -\mathbb{J}[\hat{a},\mathbb{J}\hat{b}] = 0 \,\, .
\end{equation}
By \cite[Proposition 1.4]{koiso}, the integrability of $\mathbb{J}$ is equivalent to the vanishing of the Nijenhuis tensor $N_{\mathbb{J}}$ and so, by \eqref{eq:tangent-space} and \eqref{eq:almkahlerdim(2)}, it follows that $\mathbb{J}$ is integrable.

A straightforward computation shows that for any $J \in \mathcal{J}_{\rm alm}(\omega)^{\mathsf{K}}$
$$
{\rm tr}(J[J,a][J,b]) = 2{\rm tr}(J[a,b])
$$
and so
$$
\mathcal{L}_{\hat{a}}\big(\mathbb{\Omega}(\hat{b},\hat{c})\big)_J = \tfrac{\rm d}{{\rm d} t}\, \mathbb{\Omega}_{\Theta^{\hat{a}}_t(J)}\big(\hat{b}_{\Theta^{\hat{a}}_t(J)},\hat{c}_{\Theta^{\hat{a}}_t(J)}\big)\, \big|_{t=0} = 2 \frac{\rm d}{{\rm d} t} \int_M {\rm tr}(\Theta^{\hat{a}}_t(J)[b,c])\, \omega^n\, \Big|_{t=0} = 2 \int_M {\rm tr}(\hat{a}_J[b,c])\, \omega^n \,\, .
$$
Therefore
\begin{align*}
\big(\mathcal{L}_{\hat{a}}\mathbb{\Omega}\big)(\hat{b},\hat{c})_J &=
\mathcal{L}_{\hat{a}}\big(\mathbb{\Omega}(\hat{b},\hat{c})\big)_J
-\mathbb{\Omega}([\hat{a},\hat{b}],\hat{c})_J
-\mathbb{\Omega}(\hat{b},[\hat{a},\hat{c}])_J \\
&= 2 \int_M \big( {\rm tr}(\hat{a}_J[b,c]) -{\rm tr}(J[[a,b],c]) -{\rm tr}(J[b,[a,c]]) \big)\, \omega^n \\
&= 0 \,\, .
\end{align*}
Notice that, in virtue of \eqref{eq:tangent-space}, this implies that
\begin{equation} \label{eq:almkahlerdim(3)}
\mathcal{L}_{\hat{a}}\mathbb{\Omega} = 0
\end{equation}
and so, from \eqref{eq:almkahlerdim(1)} and \eqref{eq:almkahlerdim(3)} it follows that the basic vector fields are holomorphic Killing. Finally
\begin{align*}
{\rm d}\mathbb{\Omega}(\hat{a},\hat{b},\hat{c})_J &=
\big( \mathcal{L}_{\hat{a}}\big(\mathbb{\Omega}(\hat{b},\hat{c})\big)
+\mathcal{L}_{\hat{b}}\big(\mathbb{\Omega}(\hat{c},\hat{a})\big)
+\mathcal{L}_{\hat{c}}\big(\mathbb{\Omega}(\hat{a},\hat{b})\big)
-\mathbb{\Omega}([\hat{a},\hat{b}],\hat{c})
-\mathbb{\Omega}([\hat{b},\hat{c}],\hat{a})
-\mathbb{\Omega}([\hat{c},\hat{a}],\hat{b}) \big)_J \\
&= \mathbb{\Omega}([\hat{a},\hat{b}],\hat{c})_J
+\mathbb{\Omega}([\hat{b},\hat{c}],\hat{a})_J
+\mathbb{\Omega}([\hat{c},\hat{a}],\hat{b})_J \\
&= 2 \int_M \big({\rm tr}(J[[a,b],c]) +{\rm tr}(J[[b,c],a]) +{\rm tr}(J[[c,a],b])\big) \,\omega^n \\
&= 0
\end{align*}
and this concludes the proof.
\end{proof}

\begin{remark}
By \eqref{eq:tangent-space} and \eqref{eq:liebr}, it follows that the infinite dimensional Lie algebra $\mathcal{C}^{\infty}(M; \mathfrak{sp}(TM,\omega))^{\mathsf{K}}$ acts effectively and transitively on $\mathcal{J}_{\rm alm}(\omega)^{\mathsf{K}}$ (see {\it e.g.\ }\cite[Section 2]{Alek-Mich} for the notation). Moreover, by Proposition \ref{prop:almkahler}, the K\"ahler stucture defined in \eqref{eq:tautK} is invariant with respect to such Lie algebra action.
\end{remark}

Finally, we introduce the subset
\begin{equation} \label{def:JomegaT}
\mathcal{J}(\omega)^{\mathsf{K}} \coloneqq \big\{ J \in \mathcal{J}_{\rm alm}(\omega)^{\mathsf{K}} : \text{$J$ is integrable} \big\} \,\, .
\end{equation}
In the same spirit as \cite[Theorem 4.2]{fujiki}, one can show that $\mathcal{J}(\omega)^{\mathsf{K}}$ is an analytic subset of $\mathcal{J}_{\rm alm}(\omega)^{\mathsf{K}}$.
As already mentioned in Section \ref{sect:prel}, any element $J \in \mathcal{J}(\omega)^{\mathsf{K}}$ yields a locally conformally K\"ahler structure $(J,g_J = \omega(\_,J\_))$ on $M$, with fundamental $(1,1)$-form $\omega$, such that $\mathsf{K} \subset {\rm Aut}(M,J,g_J)$. 

\subsection{Linearization formulas} \hfill \par

The aim of this subsection is to compute the linearization of some geometric quantities related to the locally conformally K\"ahler structures induced by elements in $\mathcal{J}(\omega)^{\mathsf{K}}$. \smallskip

Fix $J \in \mathcal{J}(\omega)^{\mathsf{K}}$ and a direction $\hat{a}_J \in T_J\mathcal{J}_{\rm alm}(\omega)^{\mathsf{K}}$, for some $a \in \mathcal{C}^{\infty}(M;\mathfrak{sp}(TM,\omega))^{\mathsf{K}}$. For the sake of shortness, we set $g \coloneqq g_J$ and 
$$
\mathring{a} \coloneqq -J\hat{a}_J \,\, .
$$
An easy computation shows that the endomorphism $\mathring{a}$ is $g$-symmetric, $J$-anti-invariant and trace-free, {\it i.e.\ }
\begin{equation} \label{eq:propringa}
g(\mathring{a}(X),Y) = g(X,\mathring{a}(Y)) \,\, , \quad J\mathring{a} + \mathring{a}J=0 \,\, , \quad {\rm tr}(\mathring{a})=0 \,\, .
\end{equation}
In the following, for the sake of shortness, given any function $F$ defined on $\mathcal{J}_{\rm alm}(\omega)^{\mathsf{K}}$, we will denote by $F'$ the differential of $F$ at $J$ in the direction of $\hat{a}_J$. A straightforward computation proves the following

\begin{lemma} \label{lem:lin-g-theta}
The following equalities hold true:
\begin{align}
g'(X,Y) &= g(\mathring{a}(X),Y) \,\, , \label{eq:g'} \\
(\theta^{\sharp})' &= -\mathring{a}(\theta^{\sharp}) \,\, , \label{eq:theta'} \\
(|\theta|^2)' &= -g(\mathring{a}, \theta\otimes\theta^{\sharp}) \,\, . \label{eq:normth'}
\end{align}
\end{lemma}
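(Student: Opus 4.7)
The three identities follow from direct differentiation, so the plan is almost entirely computational: pick a curve $J_t \subset \mathcal{J}_{\rm alm}(\omega)^{\mathsf{K}}$ with $J_0 = J$ and $J_0' = \hat{a}_J$, express each object in terms of $J_t$, differentiate at $t=0$, and simplify using the relation $\hat{a}_J = J\mathring{a}$ (which is immediate from $\mathring{a} = -J\hat{a}_J$ together with $J^2 = -{\rm Id}$).

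For \eqref{eq:g'}: from $g_t(X,Y) = \omega(X, J_tY)$ I would directly get $g'(X,Y) = \omega(X,\hat{a}_JY) = \omega(X, J\mathring{a} Y) = g(X,\mathring{a}(Y))$, and by $g$-symmetry of $\mathring{a}$ (the first identity in \eqref{eq:propringa}) this equals $g(\mathring{a}(X),Y)$.

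For \eqref{eq:theta'}: the Lee form $\theta$ is intrinsic to $\omega$ and does not depend on $J$, so only the musical dual varies. I would differentiate the defining identity $\theta(X) = g(\theta^{\sharp},X)$ to obtain
\[
0 = g'(\theta^{\sharp},X) + g((\theta^{\sharp})',X) = g(\mathring{a}(\theta^{\sharp}),X) + g((\theta^{\sharp})',X),
\]
from which $(\theta^{\sharp})' = -\mathring{a}(\theta^{\sharp})$ follows by non-degeneracy of $g$.

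For \eqref{eq:normth'}: writing $|\theta|^2 = \theta(\theta^{\sharp})$ and differentiating gives $(|\theta|^2)' = \theta((\theta^{\sharp})') = -\theta(\mathring{a}(\theta^{\sharp}))$ by the previous step. Unpacking the pairing $g(\mathring{a},\theta\otimes\theta^{\sharp})$ in a local $g$-orthonormal frame $(\tilde e_\alpha)$ yields
\[
g(\mathring{a},\theta\otimes\theta^{\sharp}) = \sum_\alpha \theta(\tilde e_\alpha)\,g(\mathring{a}(\tilde e_\alpha),\theta^{\sharp}) = g(\mathring{a}(\theta^{\sharp}),\theta^{\sharp}) = \theta(\mathring{a}(\theta^{\sharp})),
\]
which gives exactly $(|\theta|^2)' = -g(\mathring{a},\theta\otimes\theta^{\sharp})$.

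There is no real obstacle here; the only point requiring any care is bookkeeping of the sign and placement of $J$ relating $\hat{a}_J$ and $\mathring{a}$, and confirming that the bilinear pairing in \eqref{eq:normth'} is the natural Hilbert--Schmidt pairing on $(1,1)$-tensors induced by $g$ (since $\theta \otimes \theta^{\sharp}$ is viewed as the endomorphism $X \mapsto \theta(X)\theta^{\sharp}$). Once these conventions are fixed, each of the three formulas is a two-line differentiation.
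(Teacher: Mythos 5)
Your computation is correct and is precisely the direct differentiation that the paper omits (it labels the lemma as following from ``a straightforward computation''): the sign conventions relating $\hat{a}_J = J\mathring{a}$, the $g$-symmetry of $\mathring{a}$ from \eqref{eq:propringa}, and the Hilbert--Schmidt reading of $g(\mathring{a},\theta\otimes\theta^{\sharp})$ are all handled consistently with the rest of the text (compare $g(\mathring{a},{\rm Id})={\rm tr}(\mathring{a})$ in \eqref{eq:lin-scal-dim(5)}). Nothing is missing.
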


As for the Riemannian scalar curvature, we have

\begin{proposition} 
The linearization of the Riemannian scalar curvature reads as
\begin{equation} \label{eq:lin-scal}
{\rm scal}' = {\rm d}^*(\delta \mathring{a}) +\tfrac{n-1}2 g(\mathring{a},\theta \otimes \theta^{\sharp})
\,\, .
\end{equation}
\end{proposition}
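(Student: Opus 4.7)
The plan is to apply the classical first-variation formula for the Riemannian scalar curvature and then to exploit the symmetry hypotheses \ref{(k1)}--\ref{(k3)} to eliminate a spurious $D(\theta^{\sharp})$-term that would otherwise appear. Concretely, let $t \mapsto J_t$ be a curve in $\mathcal J_{\rm alm}(\omega)^{\mathsf{K}}$ with $J_0 = J$ and $\dot J_0 = \hat a_J$, and set $g_t := g_{J_t}$ and $h := \dot g_0$. By \eqref{eq:g'} we have $h(X,Y) = g(\mathring a X, Y)$ with $\mathring a$ symmetric, $J$-anti-invariant and trace-free (by \eqref{eq:propringa}). The classical first variation, in Besse form, reads
$$
{\rm scal}' \,=\, -\Delta({\rm tr}_g h) + \delta\delta h - g({\rm Ric}, h).
$$
The first term vanishes since ${\rm tr}_g h = {\rm tr}\,\mathring a = 0$, and an elementary check using the symmetry of $\mathring a$ shows that $\delta\delta h$ rewrites as ${\rm d}^*(\delta \mathring a)$ with the paper's convention for $\delta$ on symmetric endomorphism fields. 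This produces the first summand of the claim.

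Next, I would evaluate $-g({\rm Ric}, \mathring a)$. Since $\mathring a$ is $J$-anti-invariant, only the $J$-anti-invariant part of ${\rm Ric}$ contributes. The locally conformally K\"ahler structure is exploited here: locally, $\omega = e^f \omega_o$ with $\omega_o$ K\"ahler and $\theta = {\rm d} f$, so $g = e^f g_o$; the K\"ahler Ricci ${\rm Ric}(g_o)$ is $J$-invariant, so the $J$-anti-invariant part of ${\rm Ric}(g)$ comes entirely from the conformal correction. Applying the standard conformal-change formula for Ricci (with $\phi = f/2$), identifying the local K\"ahler Levi-Civita connection with the Weyl connection $\nabla$ of \eqref{def:Weyl}, and expanding $\nabla\theta = D\theta + \theta\otimes\theta - \tfrac12|\theta|^2 g$, one obtains
$$
({\rm Ric})^{\rm anti} \,=\, -(n-1)(D\theta)^{\rm anti} - \tfrac{n-1}{2}(\theta\otimes\theta)^{\rm anti},
$$
so that $-g({\rm Ric}, \mathring a) = (n-1)\,g(D\theta, \mathring a) + \tfrac{n-1}{2}\,g(\theta\otimes\theta^{\sharp}, \mathring a)$.

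Finally, I would show that $g(D\theta, \mathring a) = 0$: this is where the hypotheses \ref{(k1)}--\ref{(k3)} enter, and where the intermediate equations \eqref{eq:lin-scal-dim(4)}--\eqref{eq:lin-scal-dim(5)} are recorded. By \ref{(k2)}, $V = -J\theta^{\sharp}$ lies in $\mathfrak z(\mathfrak k)$; since $\mathsf K$ preserves $\omega$ and the invariant $J$, it also preserves $g = g_J$, so $V$ is both Killing and $J$-holomorphic, and Lemma \ref{lem:holvf1} gives that $\nabla V$ is $J$-invariant. Expanding $D_X\theta^{\sharp} = (D_X J)V + J(D_X V)$, computing $D_X J$ by differentiating the Weyl identity $\nabla J = 0$, and simplifying using $\theta^{\sharp} = JV$ and $\theta(V) = 0$, a direct algebraic manipulation shows that $D\theta^{\sharp}$ commutes with $J$. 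Since $\mathring a$ anti-commutes with $J$, the pointwise trace ${\rm tr}(D\theta^{\sharp}\cdot \mathring a)$ vanishes, which kills the $D\theta$-contribution and yields the stated formula.

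The main obstacle will be the precise sign-and-coefficient bookkeeping in the conformal-change computation of $({\rm Ric})^{\rm anti}$, combined with the verification that $D\theta^{\sharp}$ commutes with $J$ in the $\mathsf K$-equivariant setting; the latter is the geometric heart of the whole framework, recording the orthogonality of $D(\theta^{\sharp})$ to $T_J\mathcal J_{\rm alm}(\omega)^{\mathsf K}$ that is flagged in the Introduction and reused throughout the paper.
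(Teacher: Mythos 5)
Your proposal is correct and follows essentially the same route as the paper: Besse's first-variation formula, reduction of $-g(\mathrm{Ric},\mathring a)$ to its $J$-anti-invariant part via the relation between $\mathrm{Ric}$ and the (local K\"ahler, i.e.\ Weyl) Ricci tensor --- which is exactly the paper's formula \eqref{eq:Ric-Ric} --- and the vanishing of the $D\theta^{\sharp}$-term because $\theta^{\sharp}=JV$ is $J$-holomorphic, so that $\nabla(\theta^{\sharp})$, hence $D(\theta^{\sharp})=\nabla(\theta^{\sharp})+\tfrac12|\theta|^2\mathrm{Id}$, commutes with $J$ while $\mathring a$ anti-commutes with it. Your local conformal-change phrasing and the paper's global Weyl-connection computation in Appendix \ref{appendixA} are the same argument, since $\nabla$ is by construction the glued Levi-Civita connection of the local K\"ahler metrics.
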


\begin{proof}
By \cite[Theorem 1.174]{besse}, \eqref{eq:propringa} and \eqref{eq:g'}, we know that
\begin{equation} \label{eq:lin-scal-dim(1)}
{\rm scal}' =  {\rm d}^*(\delta \mathring{a}) -g(\mathring{a},({\rm Ric})^{\sharp}) \,\, .
\end{equation}
Moreover, if we denote by ${\rm Ric}^\nabla$ the Weyl-Ricci tensor of $(M,J,g)$, then it follows that
$$
({\rm Ric}^\nabla)^{\sharp} -({\rm Ric})^{\sharp} = (n-1)(\delta^*\theta) +\tfrac{n-1}2 \theta \otimes \theta^{\sharp} -\tfrac12({\rm d}^*\theta){\rm Id} -\tfrac{n-1}2|\theta|^2{\rm Id}
$$
(see \eqref{eq:Ric-Ric} in Appendix \ref{appendixA}). Therefore, since ${\rm d}_{\theta}\theta=0$, by \eqref{eq:delta*} it follows that
\begin{equation} \label{eq:lin-scal-dim(2)}
-g(\mathring{a},({\rm Ric})^{\sharp}) = 
-g(\mathring{a},({\rm Ric}^\nabla)^{\sharp})
+(n-1)g(\mathring{a},\nabla(\theta^{\sharp}))
+\tfrac{n-1}2 g(\mathring{a},\theta \otimes \theta^{\sharp})
-\tfrac12({\rm d}^*\theta)g(\mathring{a},{\rm Id}) \,\, .
\end{equation}
We recall that the endomorphisms $({\rm Ric}^\nabla)^{\sharp}$ is $g$-symmetric and $J$-invariant (see Corollary \ref{cor:RicJinv} in Appendix \ref{appendixA}) and so, by \eqref{eq:propringa}, we get
\begin{equation} \label{eq:lin-scal-dim(3)}
g(\mathring{a},({\rm Ric}^\nabla)^{\sharp})=0 \,\, .
\end{equation}
In the same way, since $\mathcal{L}_VJ=0$ and $\theta^{\sharp} = JV$, by Lemma \ref{lem:holvf1} it follows that $\nabla(\theta^{\sharp})$ is $J$-invariant and so, by \eqref{eq:propringa}, we get
\begin{equation} \label{eq:lin-scal-dim(4)}
g(\mathring{a},\nabla(\theta^{\sharp}))=0 \,\, .
\end{equation}
Finally, again by \eqref{eq:propringa}, we get
\begin{equation} \label{eq:lin-scal-dim(5)}
g(\mathring{a},{\rm Id}) = {\rm tr}(\mathring{a}) = 0 \,\, .
\end{equation}
Therefore, \eqref{eq:lin-scal} follows from \eqref{eq:lin-scal-dim(1)}, \eqref{eq:lin-scal-dim(2)}, \eqref{eq:lin-scal-dim(3)}, \eqref{eq:lin-scal-dim(4)} and \eqref{eq:lin-scal-dim(5)}.
\end{proof}

Finally, regarding the codifferential of $\theta$, we get

\begin{lemma}
The linearization of ${\rm d}^*\theta$ reads as
\begin{equation} \label{eq:lin-deltaomega}
({\rm d}^*\theta)' = -(\delta\mathring{a})(\theta^{\sharp}) \,\, .
\end{equation}
\end{lemma}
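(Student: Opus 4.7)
\textit{Proof proposal.} The plan is to exploit the crucial fact that the Riemannian volume form of $g=g_J$ coincides with $\omega^n$, which is itself $J$-independent. As a consequence, for any $J$-independent vector field $X \in \mathfrak{X}(M)$, the Riemannian divergence $\mathrm{div}_g(X)$, characterised by $\mathrm{div}_g(X)\,\omega^n = \mathcal{L}_X \omega^n$, does not depend on $J$ either. This observation bypasses the need to linearise the Levi-Civita connection altogether.

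Starting from the standard identity $\mathrm{d}^*\theta = -\mathrm{div}_g(\theta^{\sharp})$ and recalling that $\theta$ is $J$-independent, \eqref{eq:theta'} would immediately give
\[
(\mathrm{d}^*\theta)' = -\mathrm{div}_g\bigl((\theta^{\sharp})'\bigr) = \mathrm{div}_g\bigl(\mathring{a}(\theta^{\sharp})\bigr).
\]
To match this with the desired right-hand side, I would expand via the Leibniz rule and the definition of $\delta$, obtaining
\[
-(\delta \mathring{a})(\theta^{\sharp}) = \sum_{\alpha=1}^{2n} g\bigl((D_{\tilde{e}_\alpha}\mathring{a})(\theta^{\sharp}), \tilde{e}_\alpha\bigr) = \mathrm{div}_g\bigl(\mathring{a}(\theta^{\sharp})\bigr) - \mathrm{tr}\bigl(\mathring{a} \circ D\theta^{\sharp}\bigr),
\]
where the last trace makes use of the $g$-symmetry of both $\mathring{a}$ and $D\theta^{\sharp}=\delta^*\theta$. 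Hence it is enough to prove that $\mathrm{tr}(\mathring{a} \circ D\theta^{\sharp}) = 0$.

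For this last vanishing, applying \eqref{eq:delta*} to the closed $1$-form $\zeta = \theta$ (so that $\mathrm{d}_\theta \theta = 0$) simplifies to $\delta^*\theta = \nabla \theta^{\sharp} + \tfrac{1}{2}|\theta|^2\,\mathrm{Id}$, and therefore $D\theta^{\sharp}$ is $J$-invariant as soon as $\nabla \theta^{\sharp}$ is. The latter follows from Lemma \ref{lem:holvf1} applied to the $J$-holomorphic vector field $\theta^{\sharp} = JV$, since the Lee-type assumption forces $\mathcal{L}_V J = 0$, exactly as already exploited in \eqref{eq:lin-scal-dim(4)}. Since $\mathring{a}$ is $J$-anti-invariant by \eqref{eq:propringa}, the standard identity $\mathrm{tr}(AB) = 0$ for $A$ $J$-invariant and $B$ $J$-anti-invariant closes the argument.

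The main obstacle is essentially bookkeeping: the only substantive geometric input is the $J$-invariance of $D\theta^{\sharp}$, which is precisely the ingredient already used in the proof of \eqref{eq:lin-scal}; the volume-form rigidity completely sidesteps any linearisation of $D$ or of the dual metric on $T^*M$.
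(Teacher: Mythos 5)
Your argument is correct, and it diverges from the paper's proof in one genuinely useful way. The paper starts from ${\rm d}^*\theta = -{\rm tr}(D(\theta^{\sharp}))$ and splits the derivative as $-{\rm tr}(D'(\theta^{\sharp})) - {\rm tr}(D(\theta^{\sharp})')$, which forces it to invoke the variation formula for the Levi-Civita connection (\cite[Theorem 1.174]{besse}) and then check in \eqref{eq:lin-deltaomega-dim(2)} that ${\rm tr}(D'(\theta^{\sharp})) = {\rm tr}(D_{\theta^{\sharp}}\mathring{a}) = \mathcal{L}_{\theta^{\sharp}}({\rm tr}\,\mathring{a}) = 0$. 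Your observation that ${\rm div}_g$, characterised by ${\rm div}_g(X)\,\omega^n = \mathcal{L}_X\omega^n$, is a $J$-independent operator because the volume form $\omega^n$ is fixed, makes that whole term disappear by construction: only $(\theta^{\sharp})' = -\mathring{a}(\theta^{\sharp})$ from \eqref{eq:theta'} enters, and you land directly on ${\rm div}_g(\mathring{a}(\theta^{\sharp}))$, which is exactly the quantity ${\rm tr}(D(\mathring{a}(\theta^{\sharp})))$ appearing in the paper's \eqref{eq:lin-deltaomega-dim(3)}. From there the two proofs coincide: the Leibniz expansion produces the correction term ${\rm tr}(\mathring{a}\circ D(\theta^{\sharp})) = g(D(\theta^{\sharp}),\mathring{a})$, and its vanishing rests on the identity $D(\theta^{\sharp}) = \nabla(\theta^{\sharp}) + \tfrac12|\theta|^2{\rm Id}$ of \eqref{eq:lin-deltaomega-dim(4)} together with the $J$-invariance of $\nabla(\theta^{\sharp})$ (from $\mathcal{L}_VJ=0$ and $\theta^{\sharp}=JV$, as in \eqref{eq:lin-scal-dim(4)}) paired against the $J$-anti-invariance of $\mathring{a}$ in \eqref{eq:propringa}. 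What your route buys is economy -- no linearisation of $D$ and no appeal to Besse's formula for $D'$; what it costs is nothing, since the volume-form rigidity is already recorded in the paper's preliminaries.
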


\begin{proof}
Since ${\rm d}^*\theta = -{\rm tr}(D(\theta^{\sharp}))$, we get
\begin{equation} \label{eq:lin-deltaomega-dim(1)}
({\rm d}^*\theta)' = -{\rm tr}(D'(\theta^{\sharp})) -{\rm tr}(D(\theta^{\sharp})') \,\, .
\end{equation}
Fix a local $g$-orthonormal frame $(\tilde{e}_{\alpha})_{\alpha \in \{1,\ldots,2n\}}$ on $M$. Then, by \cite[Theorem 1.174]{besse} and \eqref{eq:g'}, it follows that
\begin{align*}
2{\rm tr}(D'(\theta^{\sharp}))
&= \sum_{\alpha=1}^{2n} 2g(D'(\tilde{e}_{\alpha},\theta^{\sharp}),\tilde{e}_{\alpha}) \\
&= \sum_{\alpha=1}^{2n} \big( g((D_{\tilde{e}_{\alpha}}\mathring{a})(\theta^{\sharp}),\tilde{e}_{\alpha}) +g((D_{\theta^{\sharp}}\mathring{a})(\tilde{e}_{\alpha}),\tilde{e}_{\alpha}) -g((D_{\tilde{e}_{\alpha}}\mathring{a})(\tilde{e}_{\alpha}),\theta^{\sharp}) \big) \\
&= {\rm tr}(D_{\theta^{\sharp}}\mathring{a}) \\
&= \mathcal{L}_{\theta^{\sharp}}\big({\rm tr}(\mathring{a})\big)
\end{align*}
and so, by \eqref{eq:propringa},
\begin{equation} \label{eq:lin-deltaomega-dim(2)}
{\rm tr}(D'(\theta^{\sharp})) = 0 \,\, .
\end{equation}
On the other hand, by \eqref{eq:theta'}, we have
\begin{equation} \label{eq:lin-deltaomega-dim(3)}
-{\rm tr}(D(\theta^{\sharp})') = {\rm tr}\big( D(\mathring{a}(\theta^{\sharp})) \big) = -(\delta\mathring{a})(\theta^{\sharp}) +g(D(\theta^{\sharp}),\mathring{a}) \,\, .
\end{equation}
Finally, a direct computation based on \eqref{def:Weyl} shows that
\begin{equation} \label{eq:lin-deltaomega-dim(4)}
D(\theta^{\sharp}) = \nabla(\theta^{\sharp}) +\tfrac12|\theta|^2{\rm Id}
\end{equation}
and so \eqref{eq:lin-deltaomega} follows by \eqref{eq:lin-scal-dim(4)}, \eqref{eq:lin-scal-dim(5)}, \eqref{eq:lin-deltaomega-dim(1)}, \eqref{eq:lin-deltaomega-dim(2)}, \eqref{eq:lin-deltaomega-dim(3)} and \eqref{eq:lin-deltaomega-dim(4)}.
\end{proof}

\medskip
\section{The moment map and the Futaki invariant} \label{sect:main}
\setcounter{equation} 0

Let $(M^{2n},\omega)$ be a compact, connected, smooth, locally conformally symplectic manifold of real dimension $2n$, with non-exact Lee form $\theta$, and assume that $\mathsf{K}$ is a compact, connected Lie group that acts effectively on $M$ and verifies \ref{(k1)}, \ref{(k2)}, \ref{(k3)} as in Section \ref{sect:actionK}. \smallskip

Let us consider the group of $\mathsf{K}$-invariant, special conformal automorphisms of $(M,\omega)$
\begin{equation} \label{def:specconfTGroup}
{\rm Aut}^{\star}(M,[\omega])^{\mathsf{K}} \coloneqq {\rm Aut}^{\star}(M,[\omega]) \cap {\rm Diff}(M)^{\mathsf{K}}
\end{equation}
and the subgroup of $\mathsf{K}$-invariant, twisted-Hamiltonian diffeomorphisms of $(M,\omega)$
\begin{equation} \label{def:twistedHamTGroup}
{\rm Ham}(M,[\omega])^{\mathsf{K}} \coloneqq {\rm Ham}(M,[\omega]) \cap {\rm Diff}(M)^{\mathsf{K}} \,\, ,
\end{equation}
with ${\rm Diff}(M)^{\mathsf{K}} \coloneqq \{\varphi \in {\rm Diff}(M) : \varphi \circ \psi = \psi \circ \varphi \text{ for any $\psi \in \mathsf{K}$} \}$. Analogously, we consider the Lie algebra of $\mathsf{K}$-invariant, special conformal vector fields of $(M,\omega)$
\begin{equation} \label{def:specconfTLie}
\mathfrak{aut}^{\star}(M,[\omega])^{\mathsf{K}} \coloneqq \mathfrak{aut}^{\star}(M,[\omega]) \cap \mathfrak{X}(M)^{\mathsf{K}}
\end{equation}
together with the Lie subalgebra of $\mathsf{K}$-invariant, twisted-Hamiltonian vector fields of $(M,\omega)$
\begin{equation} \label{def:twistedHamTLie}
\mathfrak{ham}(M,[\omega])^{\mathsf{K}} \coloneqq \mathfrak{ham}(M,[\omega]) \cap \mathfrak{X}(M)^{\mathsf{K}} \,\, .
\end{equation}
By the very definitions, it is immediate to check that both $\mathfrak{aut}^{\star}(M,[\omega])^{\mathsf{K}}$, $\mathfrak{ham}(M,[\omega])^{\mathsf{K}}$ verify the property \eqref{eq:g-isotopy} and that ${\rm Aut}^{\star}(M,[\omega])^{\mathsf{K}}$, ${\rm Ham}(M,[\omega])^{\mathsf{K}}$ are the generalized Lie transformation groups generated by them. The first ingredient to construct a moment map in our setting is the following

\begin{proposition} \label{prop:importante}
The following properties hold true.
\begin{itemize}
\item[a)] The Lie algebra $\mathfrak{aut}^{\star}(M,[\omega])^{\mathsf{K}}$ preserves $\omega$, {\it i.e.\ }
\begin{equation} \label{eq:autTpresomega}
\mathcal{L}_X\omega = 0 \quad \text{ for any }\,\, X \in \mathfrak{aut}^{\star}(M,[\omega])^{\mathsf{K}} \,\, .
\end{equation}
\item[b)] The map $\kappa$ defined in \eqref{eq:kappa} restricts to a linear isomorphism $\mathfrak{ham}(M,[\omega])^{\mathsf{K}} \simeq \mathcal{C}^{\infty}(M,\mathbb{R})^{\mathsf{K}}$.
\end{itemize}
\end{proposition}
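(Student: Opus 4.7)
The plan is to treat parts (a) and (b) in sequence, in both cases exploiting the triviality of $H^0_{{\rm d}_\theta}(M,\mathbb{R})$ from Lemma~\ref{lem:cohom0} to convert a ${\rm d}_\theta$-closed real function into zero, and using assumptions \ref{(k1)}--\ref{(k2)} to produce the requisite symmetry.

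For part (a), let $X\in\mathfrak{aut}^{\star}(M,[\omega])^{\mathsf{K}}$. By \eqref{eq:Lomega} and \eqref{def:specconfaut} we have $\mathcal{L}_X\omega=\theta(X)\,\omega$, so the claim reduces to showing that the smooth function $\theta(X)$ vanishes identically. The key observation is that, thanks to \ref{(k2)}, one has $V\in\mathfrak{z}(\mathfrak{k})$, and since $X$ is $\mathsf{K}$-invariant this forces $[V,X]=0$. Feeding the pair $(V,X)$ into the commutator formula \eqref{eq:commutator}, the first term vanishes by the hypothesis $X\in\mathfrak{aut}^{\star}(M,[\omega])$, the third by $[V,X]=0$, and the second because $V\lrcorner\omega=\theta$ is itself ${\rm d}_\theta$-closed (indeed ${\rm d}\theta=0$ and $\theta\wedge\theta=0$). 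Since $\theta(X)=\omega(V,X)$, this reads ${\rm d}_\theta(\theta(X))=0$, placing $\theta(X)$ inside $H^0_{{\rm d}_\theta}(M,\mathbb{R})$, which is trivial by Lemma~\ref{lem:cohom0}.

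For part (b), I first note that for any $Y\in\mathfrak{k}$ one has $\mathcal{L}_Y\theta=0$: indeed $\mathcal{L}_Y(V\lrcorner\omega)=[Y,V]\lrcorner\omega+V\lrcorner\mathcal{L}_Y\omega$, and both terms vanish by \ref{(k2)} and \ref{(k1)}, respectively. Hence $\mathcal{L}_Y$ commutes with ${\rm d}_\theta={\rm d}-\theta\wedge$ on forms of every degree. Given $X\in\mathfrak{ham}(M,[\omega])^{\mathsf{K}}$, applying $\mathcal{L}_Y$ to $X\lrcorner\omega={\rm d}_\theta\kappa(X)$ and using \ref{(k1)} together with $[Y,X]=0$ gives ${\rm d}_\theta(Y(\kappa(X)))=0$, so Lemma~\ref{lem:cohom0} forces $Y(\kappa(X))=0$ for every $Y\in\mathfrak{k}$; connectedness of $\mathsf{K}$ then yields $\kappa(X)\in\mathcal{C}^{\infty}(M,\mathbb{R})^{\mathsf{K}}$. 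Conversely, given $h\in\mathcal{C}^{\infty}(M,\mathbb{R})^{\mathsf{K}}$ and $X\coloneqq\kappa^{-1}(h)$, applying $\mathcal{L}_Y$ to $X\lrcorner\omega={\rm d}_\theta h$ and using $Y(h)=0$ and \ref{(k1)} yields $[Y,X]\lrcorner\omega=0$; non-degeneracy of $\omega$ forces $[Y,X]=0$, and connectedness of $\mathsf{K}$ again gives $X\in\mathfrak{X}(M)^{\mathsf{K}}$. Since $\kappa$ is already a linear bijection on $\mathfrak{ham}(M,[\omega])$ by \eqref{eq:kappa}, these two inclusions upgrade it to a linear isomorphism on $\mathsf{K}$-invariant parts.

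I do not foresee any real obstacle: both halves ultimately rest on the same two tools, the commutator identity \eqref{eq:commutator} (equivalently the commutation $[\mathcal{L}_Y,{\rm d}_\theta]=0$) together with Lemma~\ref{lem:cohom0}. The only place that requires some care is the verification that $\mathcal{L}_Y\theta=0$ for $Y\in\mathfrak{k}$, where the Lee-type condition \ref{(k2)} is essential; once this is in hand the remainder of the argument is a formal book-keeping exercise.
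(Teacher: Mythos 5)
Your proof is correct and follows essentially the same route as the paper: part (a) is identical (the commutator identity \eqref{eq:commutator} applied to the pair $(V,X)$, with $[V,X]=0$ from $\mathsf{K}$-invariance, followed by Lemma \ref{lem:cohom0}), and part (b) is the infinitesimal version of the paper's argument, which works with group elements $\psi\in\mathsf{K}$ and pullbacks rather than with $Y\in\mathfrak{k}$, Lie derivatives and connectedness of $\mathsf{K}$. One minor remark: the invariance $\mathcal{L}_Y\theta=0$ already follows from \ref{(k1)} alone, since $\theta$ is determined by $\omega$ through ${\rm d}\omega=\theta\wedge\omega$, so \ref{(k2)} is not actually essential at that step, although your derivation via $V\lrcorner\omega=\theta$ is of course valid.
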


\begin{proof}
Let $X \in \mathfrak{aut}^{\star}(M,[\omega])^{\mathsf{K}}$. Then, by \eqref{eq:commutator} we get
$$
{\rm d}_{\theta}(\theta(X)) = {\rm d}_{\theta}(\omega(V,X)) = -[V,X] \lrcorner \omega = 0 \,\, .
$$
Therefore, by Lemma \ref{lem:cohom0}, this means that $\theta(X) = 0$ and so \eqref{eq:autTpresomega} follows by \eqref{eq:Lomega}.

Fix now $X \in \mathfrak{ham}(M,[\omega])$. By definition, the action of $\mathsf{K}$ preserves $\omega$, and hence it preserves $\theta$ as well. Therefore, for any $\psi \in \mathsf{K}$, we get
$$
(\psi_* X) \lrcorner \omega = (\psi^{-1})^*(X\lrcorner \omega) = (\psi^{-1})^*{\rm d}_{\theta}(\kappa(X)) = {\rm d}_{\theta}(\kappa(X) \circ \psi^{-1})
$$
which implies, together with Lemma \ref{lem:cohom0}, that $X$ is $\mathsf{K}$-invariant if and only if $\kappa(X)$ is $\mathsf{K}$-invariant.
\end{proof}

In the following proposition, we introduce a natural action of the group ${\rm Aut}^{\star}(M,[\omega])^{\mathsf{K}}$ on the space $\mathcal{J}_{\rm alm}(\omega)^{\mathsf{K}}$ of $\mathsf{K}$-invariant, compatible, almost complex structures and we prove that it preserves the tau\-tolo\-gi\-cal symplectic structure $\mathbb{\Omega}$ defined in \eqref{eq:tautK} and the subset $\mathcal{J}(\omega)^{\mathsf{K}}$ defined in \eqref{def:JomegaT}.

\begin{proposition} \label{prop:twHamaction}
The generalized Lie transformation group ${\rm Aut}^{\star}(M,[\omega])^{\mathsf{K}}$ acts on $\mathcal{J}_{\rm alm}(\omega)^{\mathsf{K}}$ by pull-back
\begin{equation} \label{def:action}
{\rm Aut}^{\star}(M,[\omega])^{\mathsf{K}} \times \mathcal{J}_{\rm alm}(\omega)^{\mathsf{K}} \to \mathcal{J}_{\rm alm}(\omega)^{\mathsf{K}} \,\, , \quad (\varphi, J) \mapsto \varphi^*J \coloneqq ({\rm d}\varphi)^{-1} \circ J \circ {\rm d}\varphi
\end{equation}
and this action preserves both the subset $\mathcal{J}(\omega)^{\mathsf{K}} \subset \mathcal{J}_{\rm alm}(\omega)^{\mathsf{K}}$ and the symplectic form $\mathbb{\Omega}$. Moreover, for any $X \in \mathfrak{aut}^{\star}(M,[\omega])^{\mathsf{K}}$, the fundamental vector field $X^* \in \mathcal{C}^{\infty}\big(\mathcal{J}_{\rm alm}(\omega)^{\mathsf{K}}; T\mathcal{J}_{\rm alm}(\omega)^{\mathsf{K}}\big)$ associated to $X$ is
\begin{equation} \label{eq:fundvectors}
X^* = -\mathcal{L}_XJ \,\, .
\end{equation}
\end{proposition}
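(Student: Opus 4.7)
The plan is to handle the four claims (well-definedness of \eqref{def:action} on $\mathcal{J}_{\rm alm}(\omega)^{\mathsf{K}}$, preservation of $\mathcal{J}(\omega)^{\mathsf{K}}$, invariance of $\mathbb{\Omega}$, and the formula \eqref{eq:fundvectors}) separately, but to observe first that the first three all collapse to the single finite-group invariance $\varphi^*\omega=\omega$ for every $\varphi \in {\rm Aut}^{\star}(M,[\omega])^{\mathsf{K}}$. To obtain this invariance I would use Proposition \ref{prop:integralgroup} to write $\varphi=\phi_1$ for some $\mathfrak{aut}^{\star}(M,[\omega])^{\mathsf{K}}$-isotopy to the identity $(\phi_t)_{t\in[0,1]}$, whose time-dependent generator $(Z_t)_{t\in[0,1]}$ stays in $\mathfrak{aut}^{\star}(M,[\omega])^{\mathsf{K}}$. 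Proposition \ref{prop:importante}(a) then gives $\mathcal{L}_{Z_t}\omega=0$ for every $t$, and the standard identity
$$
\tfrac{{\rm d}}{{\rm d}t}(\phi_t^*\omega)=\phi_t^*(\mathcal{L}_{Z_t}\omega)=0
$$
together with $\phi_0^*\omega=\omega$ integrates to $\varphi^*\omega=\omega$.

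Given $\varphi^*\omega=\omega$, well-definedness of \eqref{def:action} is routine: the endomorphism $\varphi^*J$ squares to $-{\rm Id}$ by functoriality, it is $\mathsf{K}$-invariant because for every $\psi\in\mathsf{K}$ the commutation $\varphi\circ\psi=\psi\circ\varphi$ gives $\psi^*(\varphi^*J)=(\psi\circ\varphi)^*J=\varphi^*(\psi^*J)=\varphi^*J$, and it is $\omega$-compatible since $g_{\varphi^*J}=\varphi^*g_J$ is positive definite. Preservation of $\mathcal{J}(\omega)^{\mathsf{K}}$ is then immediate from the naturality $N_{\varphi^*J}=\varphi^*N_J$ of the Nijenhuis tensor. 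For the invariance of $\mathbb{\Omega}$ I would linearise \eqref{def:action}: the differential at $J$ sends $u \in T_J\mathcal{J}_{\rm alm}(\omega)^{\mathsf{K}}$ to $\varphi^*u=({\rm d}\varphi)^{-1}\circ u\circ {\rm d}\varphi$, and combining ${\rm tr}(\varphi^*A)=({\rm tr}\,A)\circ\varphi$ with a change of variables and $\varphi^*\omega^n=\omega^n$ yields
$$
\mathbb{\Omega}_{\varphi^*J}(\varphi^*u,\varphi^*v)=\int_M{\rm tr}(Juv)\circ\varphi\;\omega^n=\int_M{\rm tr}(Juv)\,(\varphi^{-1})^*\omega^n=\mathbb{\Omega}_J(u,v).
$$

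The formula \eqref{eq:fundvectors} is a definitional computation: regarding pullback as a left action via $\varphi\cdot J\coloneqq(\varphi^{-1})^*J$, which is the convention making $X\mapsto X^*$ a Lie-algebra homomorphism, the fundamental vector field at $J$ is $X^*_J=\tfrac{{\rm d}}{{\rm d}t}\big|_{t=0}(\Theta^X_{-t})^*J=-\mathcal{L}_XJ$. The principal obstacle in the whole argument is the passage from the infinitesimal statement $\mathcal{L}_X\omega=0$ of Proposition \ref{prop:importante}(a) to the finite one $\varphi^*\omega=\omega$: everything else is routine naturality, and this step is precisely where the isotopy-based construction of Proposition \ref{prop:integralgroup} becomes essential, since ${\rm Aut}^{\star}(M,[\omega])^{\mathsf{K}}$ is only a generalized Lie group in the sense of Omori and its elements do not come equipped with a genuine exponential parametrisation.
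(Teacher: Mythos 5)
Your proof is correct, but it diverges from the paper's at the one point where there is real content, namely the invariance of $\mathbb{\Omega}$. The paper never passes to the finite statement $\varphi^*\omega=\omega$ inside this proof: it observes that ${\rm Aut}^{\star}(M,[\omega])^{\mathsf{K}}$ is connected by smooth arcs, reduces the claim to the infinitesimal identity $\mathcal{L}_{X^*}\mathbb{\Omega}=0$, and verifies that identity on basic vector fields via the bracket relation $[X^*,\hat{a}]=\widehat{\mathcal{L}_Xa}$ and the infinite-dimensional Cartan formula; the integrand then collapses to $\mathcal{L}_X\big({\rm tr}(J[a,b])\,\omega^n\big)$, an exact Lie derivative of a top form, precisely because \eqref{eq:autTpresomega} gives $\mathcal{L}_X\omega=0$. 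You instead upgrade \eqref{eq:autTpresomega} to the finite invariance $\varphi^*\omega=\omega$ by integrating $\tfrac{{\rm d}}{{\rm d}t}(\phi_t^*\omega)=\phi_t^*(\mathcal{L}_{Z_t}\omega)=0$ along a $\mathfrak{g}$-isotopy, and then get $\mathbb{\Omega}$-invariance by pure naturality and a change of variables. Both arguments are sound; yours has the advantage of making explicit the finite statement $g_{\varphi^*J}=\varphi^*g_J$ that the paper silently invokes later in the proof of Theorem \ref{thm:moment-map}, while the paper's infinitesimal route avoids having to linearise the action and exhibits the computation (Cartan formula on basic vector fields) that is reused in Proposition \ref{prop:almkahler} and in the moment-map identity itself. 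One small caveat: your opening remark that the first three claims all ``collapse'' to $\varphi^*\omega=\omega$ slightly overstates the point, since preservation of $\mathcal{J}(\omega)^{\mathsf{K}}$ rests only on the naturality of the Nijenhuis tensor and $\mathsf{K}$-invariance only on the commutation with $\mathsf{K}$ --- but you treat both correctly in the body, so this is cosmetic.
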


\begin{proof}
The action \eqref{def:action} is clearly well defined. Moreover, since the integrability of any $J \in \mathcal{J}_{\rm alm}(\omega)^{\mathsf{K}}$ is equivalent to the vanishing of the Nijenhuis tensor $N_J$, it follows that the subset $\mathcal{J}(\omega)^{\mathsf{K}}$ is ${\rm Aut}^{\star}(M,[\omega])^{\mathsf{K}}$-invariant. Formula \eqref{eq:fundvectors} follows directly from the definition of fundamental vector field and \eqref{def:action}. Finally, since ${\rm Aut}^{\star}(M,[\omega])^{\mathsf{K}}$ is connected by smooth arcs, in order to check that it preserves the symplectic form $\mathbb{\Omega}$, it is sufficient to prove that $\mathcal{L}_{X^*}\mathbb{\Omega} = 0$ for any $X \in \mathfrak{aut}^{\star}(M,[\omega])^{\mathsf{K}}$.

Therefore, fix $X \in \mathfrak{aut}^{\star}(M,[\omega])^{\mathsf{K}}$, $J \in \mathcal{J}_{\rm alm}(\omega)^{\mathsf{K}}$ and take two basic vector fields $\hat{a},\hat{b}$ on $\mathcal{J}_{\rm alm}(\omega)^{\mathsf{K}}$. A straightforward computation shows that
$$
[X^*,\hat{a}] = \widehat{\mathcal{L}_Xa}
$$
and then, by the infinite dimensional Cartan formula, we compute
\begin{align*}
(\mathcal{L}_{X^*}\mathbb{\Omega})(\hat{a},\hat{b})_J &= \big({\rm d}(X^* \lrcorner \mathbb{\Omega})\big)(\hat{a},\hat{b})_J \\
&= \mathcal{L}_{\hat{a}}\big(\mathbb{\Omega}(X^*,\hat{b})\big)
-\mathcal{L}_{\hat{b}}\big(\mathbb{\Omega}(X^*,\hat{a})\big)
-\mathbb{\Omega}(X^*,[\hat{a},\hat{b}]) \,\big|_J \\
&= -\mathbb{\Omega}(\widehat{\mathcal{L}_Xa},\hat{b})
-\mathbb{\Omega}(\hat{a},\widehat{\mathcal{L}_Xb})
+\mathbb{\Omega}(X^*,[\hat{a},\hat{b}]) \,\big|_J \\
&= -2\int_M \big( {\rm tr}(J[(\mathcal{L}_Xa),b]) +{\rm tr}(J[a,(\mathcal{L}_Xb)])
+{\rm tr}((\mathcal{L}_XJ)[a,b]) \big)\, \omega^n \\
&= -2\int_M \mathcal{L}_X \big( {\rm tr}(J[a,b]) \big) \, \omega^n \,\, .\\
\end{align*}
Therefore, by \eqref{eq:autTpresomega}, we get
$$
(\mathcal{L}_{X^*}\mathbb{\Omega})(\hat{a},\hat{b})_J = -2\int_M \mathcal{L}_X \big( {\rm tr}(J[a,b]) \big) \, \omega^n = -2\int_M \mathcal{L}_X \big( {\rm tr}(J[a,b]) \, \omega^n \big) =0
$$
and so the thesis follows from \eqref{eq:tangent-space}.
\end{proof}

Finally, in virtue of \eqref{eq:tangent-space}, Proposition \ref{prop:importante} and Proposition \ref{prop:twHamaction}, we are ready to prove the main result of this paper, that is the following

\begin{theorem} \label{thm:moment-map}
The map $\mu \colon \mathcal{J}(\omega)^{\mathsf{K}} \to \mathcal{C}^{\infty}(M;\mathbb{R})^{\mathsf{K}}$ defined by
\begin{equation} \label{eq:mu}
\mu \coloneqq {\rm scal}^{\rm Ch} +n\,{\rm d}^*\theta
\end{equation}
admits a smooth extension to an ${\rm Aut}^{\star}(M,[\omega])^{\mathsf{K}}$-equivariant map on the whole space $\mathcal{J}_{\rm alm}(\omega)^{\mathsf{K}}$ that verifies
$$
\int_M {\rm d}\mu|_J(\hat{a}_J)\,\kappa(X)\,\omega^n = -\tfrac12\,\mathbb{\Omega}_J(X^*_J,\hat{a}_J)
$$
for any $J \in \mathcal{J}(\omega)^{\mathsf{K}}$, $a \in \mathcal{C}^{\infty}(M;\mathfrak{sp}(TM,\omega))^{\mathsf{K}}$, $X \in \mathfrak{ham}(M,[\omega])^{\mathsf{K}}$.
\end{theorem}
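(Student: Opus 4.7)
The plan is to prove the three assertions — smooth extension, ${\rm Aut}^{\star}(M,[\omega])^{\mathsf{K}}$-equivariance, and the moment map identity — in this order. For the first two, the Chern/Weyl comparison of Appendix \ref{appendixA} expresses ${\rm scal}^{\rm Ch}$, at any integrable $J$, as an explicit affine combination of ${\rm scal}$, ${\rm d}^*\theta$ and $|\theta|^2$ with constant coefficients (depending only on $n$); since each of these quantities is defined on the whole of $\mathcal{J}_{\rm alm}(\omega)^{\mathsf{K}}$, the same formula extends $\mu$ smoothly. Equivariance follows from Proposition \ref{prop:importante}(a): any $\varphi\in{\rm Aut}^{\star}(M,[\omega])^{\mathsf{K}}$ preserves $\omega$, and pulling back ${\rm d}\omega=\theta\wedge\omega$ together with non-degeneracy of $\omega$ forces $\varphi^*\theta=\theta$; hence $\varphi$ carries the triple $(g_J,J,\theta)$ to $(g_{\varphi^*J},\varphi^*J,\theta)$ and both scalar invariants transform equivariantly under pull-back.

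For the moment map identity, I differentiate the extended $\mu$ at $J$ in the direction $\hat{a}_J$. Combining \eqref{eq:lin-scal}, \eqref{eq:lin-deltaomega}, \eqref{eq:normth'} with the appendix comparison, all the pointwise-algebraic contributions involving $({\rm Ric}^\nabla)^\sharp$, $\nabla(\theta^\sharp)$ and the identity endomorphism pair trivially against $\mathring{a}$ by the orthogonality arguments already used in \eqref{eq:lin-scal-dim(3)}--\eqref{eq:lin-scal-dim(5)}: $\mathring{a}$ is $g$-symmetric, $J$-anti-invariant and trace-free, whereas those three tensors are either $J$-invariant or a multiple of the identity. What survives is a second-order expression in $\mathring{a}$ whose live ingredients are ${\rm d}^*(\delta\mathring{a})$, $(\delta\mathring{a})(\theta^{\sharp})$, and at most an algebraic remainder proportional to $g(\mathring{a},\theta\otimes\theta^{\sharp})$.

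I then pair this linearization against $h\coloneqq\kappa(X)\in \mathcal{C}^{\infty}(M,\mathbb{R})^{\mathsf{K}}$ and integrate by parts twice, moving the divergences onto $h$. The coefficient $n$ in front of ${\rm d}^*\theta$ in \eqref{eq:mu} is what recombines the two divergence contributions into the twisted codifferential applied against $\mathring{a}$, so that the resulting integrand sees $h$ only through ${\rm d}_\theta h=X\lrcorner\omega$. Using \eqref{eq:holvf2} with ${\rm d}_\theta^2 h=0$ identifies $\nabla X$ as a section of $\mathfrak{sp}(TM,\omega)$, and the integrand collapses to $\int_M g(\mathring{a}, S_J(X))\,\omega^n$ with $S_J(X)$ the $g$-symmetric, $J$-anti-invariant component of $\nabla X$ (automatically trace-free, since $\nabla X\in\mathfrak{sp}(TM,\omega)$). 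For the right-hand side, I substitute $X^*_J=-\mathcal{L}_XJ=-[J,\nabla X]$ from \eqref{eq:fundvectors} and Lemma \ref{lem:holvf1} into $\mathbb{\Omega}_J(X^*_J,\hat{a}_J)=\int_M {\rm tr}(JX^*_J\hat{a}_J)\,\omega^n$, and invoke the algebraic identity ${\rm tr}(J[J,B]\,[J,a])=-2\,{\rm tr}(B\,[J,a])$ — valid for any endomorphism $B$ by cyclicity and $J^2=-{\rm Id}$ — to bring $-\tfrac12\,\mathbb{\Omega}_J(X^*_J,\hat{a}_J)$ to the same integral, matching signs and the $\tfrac12$ normalization.

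The main obstacle is the $\theta$-bookkeeping in the third step: several correction terms produced by linearizing the Chern/Weyl comparison and by the twist in ${\rm d}_\theta$ must cancel against one another, and it is precisely the coefficient $n$ in front of ${\rm d}^*\theta$ that makes this possible. Verifying that no residual $h\theta$-type remainder survives — so that the final pairing depends on $X$ only through $\kappa(X)$ — is the heart of the argument and pins down the correct form of the moment map.
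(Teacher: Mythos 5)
Your proposal is correct and follows essentially the same route as the paper: extend $\mu$ via the Chern--Riemannian scalar curvature comparison \eqref{eq:scalCh-scal} to $\mu={\rm scal}+{\rm d}^*\theta+\tfrac{n-1}{2}|\theta|^2$, get equivariance from the $\omega$-preservation of ${\rm Aut}^{\star}(M,[\omega])^{\mathsf{K}}$, cancel the $\tfrac{n-1}{2}\,g(\mathring{a},\theta\otimes\theta^{\sharp})$ terms in the linearization, integrate by parts to land on $\langle\delta^*({\rm d}_{\theta}h),\mathring{a}\rangle_{L^2(M,g)}$, and identify this with $-\tfrac12\,\mathbb{\Omega}_J(X^*_J,\hat{a}_J)$ via \eqref{eq:delta*}, \eqref{eq:holvf2} and \eqref{eq:holvf1}. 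The only cosmetic difference is that the paper closes the argument with the symmetrization identity $2\,{\rm Sym}_g(\nabla X)=-J\circ\mathcal{L}_XJ$ rather than your equivalent trace identity ${\rm tr}(J[J,B][J,a])=-2\,{\rm tr}(B[J,a])$.
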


\begin{proof}
We recall that, when $J$ is integrable, the Chern-scalar curvature can be expressed in terms of the Riemannian scalar curvature as
$$
{\rm scal}^{\rm Ch} = {\rm scal} -(n-1)({\rm d}^*\theta) +\tfrac{n-1}2|\theta|^2
$$
(see \eqref{eq:scalCh-scal} in Appendix \ref{appendixA}). Therefore, the function
$$
\mu \colon \mathcal{J}_{\rm alm}(\omega)^{\mathsf{K}} \to \mathcal{C}^{\infty}(M;\mathbb{R})^{\mathsf{K}} \,\, , \quad \mu \coloneqq {\rm scal} +{\rm d}^*\theta +\tfrac{n-1}2|\theta|^2
$$
extends \eqref{eq:mu} to the whole $\mathcal{J}_{\rm alm}(\omega)^{\mathsf{K}}$ and one can check that it is smooth by a direct computation in local coordinates. Moreover, since ${\rm Aut}^{\star}(M,[\omega])^{\mathsf{K}}$ preserves the $2$-form $\omega$, it follows that $g_{\varphi^*J} = \varphi^*g_J$ for any $J \in \mathcal{J}_{\rm alm}(\omega)^{\mathsf{K}}$, $\varphi \in {\rm Aut}^{\star}(M,[\omega])^{\mathsf{K}}$ and so the extended map $\mu$ is ${\rm Aut}^{\star}(M,[\omega])^{\mathsf{K}}$-equivariant.

Fix $J \in \mathcal{J}(\omega)^{\mathsf{K}}$, $a \in \mathcal{C}^{\infty}(M;\mathfrak{sp}(TM,\omega))^{\mathsf{K}}$, $X \in \mathfrak{ham}(M,[\omega])^{\mathsf{K}}$ and let $h \coloneqq \kappa(X) \in \mathcal{C}^{\infty}(M;\mathbb{R})^{\mathsf{K}}$. Set $g \coloneqq g_J$, $\mathring{a} \coloneqq -J\hat{a}_J$ and, for any function $F$ defined on $\mathcal{J}_{\rm alm}(\omega)^{\mathsf{K}}$, denote by $F'$ the differential of $F$ at $J$ in the direction of $\hat{a}_J$. Then, by \eqref{eq:normth'},\eqref{eq:lin-scal} and \eqref{eq:lin-deltaomega}, we get
\begin{align}
\langle h, \mu' \rangle_{L^2(M,g)}
&= \langle h, ({\rm scal})' \rangle_{L^2(M,g)} +\langle h, ({\rm d}^*\theta)' \rangle_{L^2(M,g)} +\tfrac{n-1}2\langle h, (|\theta|^2)' \rangle_{L^2(M,g)} \nonumber \\
&= \langle h, {\rm d}^*(\delta \mathring{a}) \rangle_{L^2(M,g)} +\tfrac{n-1}2\langle h, g(\mathring{a},\theta \otimes \theta^{\sharp}) \rangle_{L^2(M,g)} -\langle h, (\delta\mathring{a})(\theta^{\sharp}) \rangle_{L^2(M,g)} \nonumber \\
&\qquad -\tfrac{n-1}2\langle h, g(\mathring{a},\theta \otimes \theta^{\sharp}) \rangle_{L^2(M,g)} \nonumber \\
&= \langle \delta^*({\rm d}h), \mathring{a} \rangle_{L^2(M,g)} -\langle \delta^*(h\theta), \mathring{a} \rangle_{L^2(M,g)} \nonumber \\
&= \langle \delta^*({\rm d}_{\theta}h), \mathring{a} \rangle_{L^2(M,g)} \,\, . \label{eq:dimmomentum(1)}
\end{align}
Since ${\rm d}_\theta h = X\lrcorner\omega = g(JX,\_)$, it follows that $({\rm d}_\theta h)^{\sharp}=JX$ and so, from \eqref{eq:delta*} and ${\rm d}_\theta^2h=0$, we get
\begin{equation} \label{eq:dimmomentum(2)}
\delta^*({\rm d}_{\theta}h) = J \circ \nabla X +\tfrac12\theta(JX){\rm Id} \,\, .
\end{equation}
Moreover, since ${\rm d}_{\theta}(X \lrcorner \omega)=0$, by \eqref{eq:holvf1} and \eqref{eq:holvf2}
\begin{align*}
0 &= \omega(\nabla_YX,Z) +\omega(Y,\nabla_ZX) \\
&= g(J\nabla_YX,Z) +g(JY,\nabla_ZX) \\
&= g(\nabla_{JY}X,Z) +g(JY,\nabla_ZX) +g((\mathcal{L}_XJ)Y,Z)
\end{align*}
and so, using again \eqref{eq:holvf1}, we obtain
\begin{equation} \label{eq:dimmomentum(3)}
2\,{\rm Sym}_{g}(\nabla X) = -J \circ \mathcal{L}_XJ \,\, .
\end{equation}
Therefore, by \eqref{eq:tautK}, \eqref{eq:propringa}, \eqref{eq:dimmomentum(1)}, \eqref{eq:dimmomentum(2)}, \eqref{eq:dimmomentum(3)}, \eqref{eq:fundvectors} and the fact that $\hat{a}_J$ is $g$-symmetric, we obtain
$$
\langle h, \mu' \rangle_{L^2(M,g)}
= -\langle \nabla X, \hat{a}_J \rangle_{L^2(M,g)}
= -\tfrac12\langle \mathcal{L}_{X}J, J\hat{a}_J \rangle_{L^2(M,g)}
= -\tfrac12\,\mathbb{\Omega}_J(X^*_J,\hat{a}_J) \,\, ,
$$
which concludes the proof.
\end{proof}

As a direct consequence of Theorem \ref{thm:moment-map}, we get the following 

\begin{corollary} \label{cor:futaki}
The value
$$
\underline{\mu} \coloneqq \frac{\int_M \mu(J) \, \omega^n}{\int_M \omega^n}
$$
and the map
$$
\mathcal{F} \colon \mathfrak{z}(\mathfrak{k}) \to \mathbb{R} \,\, , \quad
\mathcal{F}(X) \coloneqq \int_M (\mu(J)-\underline{\mu}) \kappa(X) \, \omega^n
$$
are independent of $J$, in the connected components of ${\mathcal J}(\omega)^{\mathsf{K}}$.
\end{corollary}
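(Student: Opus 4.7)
The plan is to differentiate $\underline{\mu}$ and $\mathcal{F}(X)$ along a smooth path $(J_t)_t$ lying in a connected component of $\mathcal{J}(\omega)^{\mathsf{K}}$ with tangent $\dot J_t = \hat{a}_{J_t}$, and to show that both derivatives vanish by applying Theorem \ref{thm:moment-map} to carefully chosen twisted-Hamiltonian vector fields. The ILH manifold structure on $\mathcal{J}_{\rm alm}(\omega)^{\mathsf{K}}$ described in Section \ref{sect:Jomega} ensures that the connected components of the analytic subset $\mathcal{J}(\omega)^{\mathsf{K}}$ can be joined by such smooth paths, so this reduction is enough.

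For the constancy of $\underline{\mu}$, the factor $\int_M \omega^n$ is manifestly $J$-independent. For the numerator, the key observation is that the symplectic dual $V$ of $\theta$ satisfies $V \lrcorner \omega = \theta = -{\rm d}_\theta 1$, so $\kappa(V) = -1$; moreover, by \ref{(k2)} one has $V \in \mathfrak{z}(\mathfrak{k}) \subset \mathfrak{k}$, which makes $V$ automatically $\mathsf{K}$-invariant, hence $V \in \mathfrak{ham}(M,[\omega])^{\mathsf{K}}$. Applying Theorem \ref{thm:moment-map} with $X = V$ along the path gives
\[
-\int_M {\rm d}\mu|_{J_t}(\hat{a}_{J_t})\,\omega^n \,=\, -\tfrac12\,\mathbb{\Omega}_{J_t}(V^*_{J_t},\hat{a}_{J_t}),
\]
and by \eqref{eq:fundvectors} combined with the $\mathsf{K}$-invariance of $J_t$ we have $V^*_{J_t} = -\mathcal{L}_V J_t = 0$, since $V \in \mathfrak{k}$. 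Thus $\frac{d}{dt}\int_M \mu(J_t)\,\omega^n = 0$ and $\underline{\mu}$ is constant on each connected component.

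For $\mathcal{F}$, note that for any $X \in \mathfrak{z}(\mathfrak{k})$ the centrality makes $X$ fixed under the adjoint $\mathsf{K}$-action, so $X$ is $\mathsf{K}$-invariant and, by \ref{(k3)}, lies in $\mathfrak{ham}(M,[\omega])^{\mathsf{K}}$; moreover $\kappa(X)$ is determined by the relation $X\lrcorner\omega = {\rm d}_\theta \kappa(X)$, which does not involve $J$. Combining the constancy of $\underline{\mu}$ from the previous step with Theorem \ref{thm:moment-map},
\[
\tfrac{d}{dt}\,\mathcal{F}(X) \,=\, \int_M {\rm d}\mu|_{J_t}(\hat{a}_{J_t})\,\kappa(X)\,\omega^n \,=\, -\tfrac12\,\mathbb{\Omega}_{J_t}(X^*_{J_t},\hat{a}_{J_t}) \,=\, 0,
\]
the last equality because $X \in \mathfrak{k}$ and the $\mathsf{K}$-invariance of $J_t$ yield $X^*_{J_t} = -\mathcal{L}_X J_t = 0$.

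The only non-routine ingredient is spotting the correct twisted-Hamiltonians to feed into the moment map identity so that the right-hand side is annihilated: namely $V$ itself (which produces the constant function $-1$ needed for the average) for the first claim, and the central elements $X \in \mathfrak{z}(\mathfrak{k})$ (so that $X$ is $\mathsf{K}$-invariant and $\mathcal{L}_X J = 0$ along the path) for the second. Once this is seen, the corollary follows formally from Theorem \ref{thm:moment-map} and \eqref{eq:fundvectors}, in complete analogy with the classical derivation of the K\"ahler Futaki invariant.
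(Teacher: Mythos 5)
Your proposal is correct and follows essentially the same route as the paper: the paper's proof likewise reduces to the moment-map identity (via its internal formulas \eqref{eq:dimmomentum(1)}--\eqref{eq:dimmomentum(2)}) and concludes from $\mathcal{L}_XJ=0$ for $X\in\mathfrak{z}(\mathfrak{k})$, using $\kappa(V)=-1$ with $V\in\mathfrak{z}(\mathfrak{k})$ to handle $\underline{\mu}$. Citing the stated conclusion of Theorem \ref{thm:moment-map} together with \eqref{eq:fundvectors}, as you do, is just a repackaging of the same argument.
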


\begin{proof}
We use the same notation as in the Proof of Theorem \ref{thm:moment-map}.
If $X \in \mathfrak{z}(\mathfrak{k})$ and $h\coloneqq\kappa(X)$, then by Lemma \ref{lem:holvf1}, \eqref{eq:dimmomentum(1)}, \eqref{eq:dimmomentum(2)}, it follows that
$$
\frac{{\rm d}}{{\rm d}t} \langle \mu(J_t), h \rangle_{L^2(M,g)}\big\vert_{t=0} = \langle \delta^*({\rm d}_{\theta}h), \mathring{a} \rangle_{L^2(M,g)} = -\langle \nabla X, \hat{a}_J \rangle_{L^2(M,g)} = 0 \,\, .
$$
In particular, notice that, since $\kappa(V)=-1$, we get the thesis.
\end{proof}

Notice that the existence of a complex structure $J \in {\mathcal J}(\omega)^{\mathsf{K}}$ such that $\mu(J)=\underline{\mu}$ forces $\mathcal{F}$ to vanish and thus it is an obstruction to the existence of these special locally conformally K\"ahler metrics.

\begin{remark}\label{rem:csck-covering}
Fix $J \in \mathcal J(\omega)$, and let $\widehat{g}_o=\widehat{\omega}_o(\_,\pi^*J\_)$ be the corresponding K\"ahler metric on the minimal symplectic covering $(\widehat{M},\widehat{\omega}_o)$, see Section \ref{sect:prel1}.
Then, by the standard formulas for conformal changes (see {\it e.g.\ }\cite[Theorem 1.159]{besse}) and \eqref{eq:mu} (see also \eqref{eq:scalCh-scal} in Appendix \ref{appendixA}), the Riemannian scalar curvature ${\rm scal}(\widehat{g}_o)$ can be computed in terms of $\mu(J)$ as
$$
{\rm scal}(\widehat{g}_o) = e^{w} \big( \mu(J) - 2n\,{\rm d}^*\theta - n(n-1) |\theta|^2 \big) \circ \pi \,\, ,
$$
where $\pi: \widehat{M} \to M$ denotes the projection and $f: \widehat{M} \to \mathbb{R}$ is the smooth function that verifies $\pi^*\omega = e^f\,\widehat{\omega}_o$. Therefore, the moment map $\mu$ defined in \eqref{eq:mu} does not correspond to the scalar curvature map on the minimal symplectic covering.
\end{remark}

\appendix

\section{} \label{appendixA} \setcounter{equation} 0

Let $(M,J,g)$ be a compact, connected, locally conformally K\"ahler manifold of dimension ${\rm dim_{\mathbb{R}}}M=2n$ with fundamental $2$-form $\omega \coloneqq g(J\_,\_)$ and Lee form $\theta$.
In this Appendix, we report some useful formulas that we use in the text. These are well known by the experts, but we decided to collect them here for the sake of readibility.

\subsection{The Chern connection of a locally conformally K\"ahler manifold} \label{subsec:appCh} \hfill \par

The {\it Chern connection} of $(M,J,g)$ is the linear connection defined by
\begin{equation} \label{def:Chern}
\nabla^{\rm Ch}_X Y = D_XY - \tfrac12 A^{\rm Ch}(X,Y) \,\, ,
\end{equation}
where $D$ denotes the {\it Levi-Civita connection} of $(M,g)$ and $A^{\rm Ch}(X,Y) \coloneqq {\rm d}\omega(JX,Y,\_)^{\sharp}$. By the locally conformally K\"ahler condition ${\rm d}\omega = \theta \wedge \omega$, it follows that
\begin{equation} \label{eq:ACh}
A^{\rm Ch}(X,Y) = \theta(JX)JY+\theta(Y)X-g(X,Y)\theta^{\sharp} \,\, .
\end{equation}
The Chern connection is characterized by the following properties (see {\it e.g.\ }\cite[p. 273]{gauduchon-bumi})
\begin{equation} \label{eq:propertiesCh}
\nabla^{\rm Ch} g = 0 \,\, , \quad \nabla^{\rm Ch} J =0 \,\, , \quad J(T^{\rm Ch}(X,Y))=T^{\rm Ch}(JX,Y)=T^{\rm Ch}(X,JY) \,\, , 
\end{equation}
where $T^{\rm Ch}(X,Y) \coloneqq \nabla^{\rm Ch}_XY -\nabla^{\rm Ch}_YX - [X,Y]$ is the {\it torsion tensor of $\nabla^{\rm Ch}$}. By \eqref{def:Chern} and \eqref{eq:ACh}, the tensor $T^{\rm Ch}$ takes the form (compare with \cite[p. 500]{gauduchon-mathann})
\begin{equation} \label{eq:TCh}
T^{\rm Ch}(X,Y) = \tfrac12\big(\theta(X)Y -\theta(Y)X -\theta(JX)JY +\theta(JY)JX\big) \,\, .
\end{equation}
Moreover, by \eqref{def:Chern}, \eqref{eq:ACh} and \eqref{eq:propertiesCh}, the covariant derivative $DJ$ can be expressed in terms of $\theta$ as
\begin{equation} \label{eq:DJ}
(D_YJ)(X) = \tfrac12\big(\theta(JX)Y -\theta(X)JY -g(JX,Y)\theta^{\sharp} +g(X,Y)J\theta^{\sharp}\big) \,\, .
\end{equation}
For later use, we also mention that a straightforward computation based on \eqref{def:Chern}, \eqref{eq:ACh} and \eqref{eq:TCh} shows that the covariant derivative $\nabla^{\rm Ch}T^{\rm Ch}$ can be expressed in terms of $\theta$ as
\begin{equation} \label{eq:nablaChTCh} \begin{aligned}
2g((\nabla^{\rm Ch}_ZT^{\rm Ch})&(X,Y),W) = \\
=& -(D_Z\theta)(Y)g(X,W) -\tfrac12\big(\theta(Y)\theta(Z) +\theta(JY)\theta(JZ) -|\theta|^2g(Y,Z)\big)g(X,W) \\
& +(D_Z\theta)(X)g(Y,W) +\tfrac12\big(\theta(X)\theta(Z) +\theta(JX)\theta(JZ) -|\theta|^2g(X,Z)\big)g(Y,W) \\
& +(D_Z\theta)(JY)g(JX,W) +\tfrac12\big(\theta(JY)\theta(Z) -\theta(Y)\theta(JZ) -|\theta|^2g(JY,Z)\big)g(JX,W) \\
& -(D_Z\theta)(JX)g(JY,W) -\tfrac12\big(\theta(JX)\theta(Z) -\theta(X)\theta(JZ) -|\theta|^2g(JX,Z)\big)g(JY,W) \,\, .
\end{aligned} \end{equation}
We denote by
\begin{equation} \label{def:Chcurv}
\Omega^{\rm Ch}(X,Y) \coloneqq \nabla^{\rm Ch}_{[X,Y]}-[\nabla^{\rm Ch}_X,\nabla^{\rm Ch}_Y]
\end{equation}
the {\it Chern curvature tensor} of $(M,J,g)$, by
\begin{equation} \label{def:fakescalCh}
\widetilde{\rm scal}{}^{\rm Ch} \coloneqq \sum_{\alpha,\beta=1}^{2n} g(\Omega^{\rm Ch}(\tilde{e}_{\alpha},\tilde{e}_{\beta})\tilde{e}_{\alpha},\tilde{e}_{\beta})
\end{equation}
the real trace of $\Omega^{\rm Ch}$ and by 
\begin{equation} \label{def:scalCh}
{\rm scal}^{\rm Ch} \coloneqq 2\sum_{i,j=1}^{n} g(\Omega^{\rm Ch}(e_i,Je_i)e_j,Je_j)
\end{equation}
the {\it Chern-scalar curvature} of $(M,J,g)$. Here, $(\tilde{e}_{\alpha})_{\alpha \in \{1,\ldots,2n\}} = (e_i,Je_i)_{i \in \{1,\ldots,n\}}$ is any local $(J,g)$-unitary basis on $M$. \smallskip

On a locally conformally K\"ahler manifold, the function $\widetilde{\rm scal}{}^{\rm Ch}$ and the Riemannian scalar curvature are related as follows (compare with \cite[Eq. (32)]{gauduchon-mathann}).

\begin{lemma}
The function $\widetilde{\rm scal}{}^{\rm Ch}$ defined in \eqref{def:fakescalCh} can be expressed in terms of the Riemannian scalar curvature ${\rm scal}$ and $\theta$ as
\begin{equation} \label{eq:fakescalCh-scal}
\widetilde{\rm scal}{}^{\rm Ch} = {\rm scal} -2(n-1)({\rm d}^*\theta) -(n-\tfrac32)(n-1)|\theta|^2 \,\, .
\end{equation}
\end{lemma}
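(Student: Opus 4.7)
The plan is to compute $\Omega^{\rm Ch}$ by viewing $\nabla^{\rm Ch}$ as a perturbation of the Levi-Civita connection $D$ and then to contract with a local $g$-orthonormal frame $(\tilde{e}_\alpha)_{\alpha \in \{1,\ldots,2n\}}$. Setting $\Psi(X,Y) \coloneqq -\tfrac12 A^{\rm Ch}(X,Y)$, so that $\nabla^{\rm Ch}_X Y = D_X Y + \Psi(X,Y)$, the standard curvature-difference identity (using that $D$ is torsion-free together with the sign convention in \eqref{def:Riemcurv}) reads
$$\Omega^{\rm Ch}(X,Y)Z = {\rm Rm}(X,Y)Z - (D_X\Psi)(Y,Z) + (D_Y\Psi)(X,Z) - \Psi(X,\Psi(Y,Z)) + \Psi(Y,\Psi(X,Z)).$$
Upon taking the double trace defining $\widetilde{\rm scal}{}^{\rm Ch}$ in \eqref{def:fakescalCh}, the Riemannian piece contributes exactly ${\rm scal}$ by the usual symmetries of ${\rm Rm}$, leaving a \emph{derivative} trace built from $DA^{\rm Ch}$ and a \emph{quadratic} trace built from $A^{\rm Ch}(\cdot, A^{\rm Ch}(\cdot,\cdot))$ to evaluate.

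I would then expand these two remaining traces via the explicit formula \eqref{eq:ACh}, noting that $DA^{\rm Ch}$ involves both $D\theta$ and $DJ$, and that by \eqref{eq:DJ} the tensor $DJ$ is itself linear in $\theta$. Accordingly, the derivative trace naturally decomposes as a multiple of ${\rm d}^*\theta = -{\rm tr}(D\theta^\sharp)$ plus extra purely $|\theta|^2$-terms coming from the $DJ$-contribution. The quadratic trace $\sum_{\alpha,\beta}g(\Psi(\tilde{e}_\alpha, \Psi(\tilde{e}_\beta,\tilde{e}_\alpha)),\tilde{e}_\beta)$ and its partner collapse to a multiple of $|\theta|^2$ after the routine orthonormal-frame identities $\sum_\alpha \theta(\tilde{e}_\alpha)\tilde{e}_\alpha = \theta^\sharp$, $\sum_\alpha \theta(J\tilde{e}_\alpha)J\tilde{e}_\alpha = \theta^\sharp$ and $\sum_\alpha g(\tilde{e}_\alpha,\tilde{e}_\alpha) = 2n$.

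Assembling the contributions, the total coefficient of ${\rm d}^*\theta$ should come out to $-2(n-1)$: both the $\theta(Y)X$-branch and the $\theta(JX)JY$-branch of $A^{\rm Ch}$ contribute a factor of $-(n-1)$ through the antisymmetrization in $X,Y$ inherent in $\Omega^{\rm Ch}$. The various $|\theta|^2$-contributions, arising both from the $DJ$-corrections in the derivative trace and from the quadratic trace, are expected to combine, after numerous cancellations, into the stated coefficient $-(n-\tfrac{3}{2})(n-1)$. The main obstacle is precisely this $|\theta|^2$-bookkeeping: the half-integer $n-\tfrac{3}{2}$ is the fingerprint of the partial cancellation between the $DJ$-block and the quadratic $A^{\rm Ch}\circ A^{\rm Ch}$-block, so getting all signs and multiplicities right through the many frame contractions is where the computation is most fragile. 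As an external cross-check one has \cite[Eq. (32)]{gauduchon-mathann}; as an internal cross-check, one should automatically recover $\widetilde{\rm scal}{}^{\rm Ch} = {\rm scal}$ in the K\"ahler case $\theta \equiv 0$.
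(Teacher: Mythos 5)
Your plan is essentially the paper's own proof: the paper uses the same curvature-difference expansion $\Omega^{\rm Ch} = {\rm Rm} + \tfrac12 (D_X A^{\rm Ch})(Z,\_) - \tfrac12 (D_Z A^{\rm Ch})(X,\_) - \tfrac14[A^{\rm Ch}(X,\_),A^{\rm Ch}(Z,\_)]$ (merely passing through the Ricci-level difference \eqref{eq:fakeRicCh-Ric} before taking the final trace) and evaluates exactly the derivative and quadratic frame traces you describe via \eqref{eq:ACh}, \eqref{eq:DJ} and ${\rm d}^*\omega = -(n-1)J\theta$. What remains is only the bookkeeping you defer, and the coefficients you anticipate, $-2(n-1)$ for ${\rm d}^*\theta$ and $-(n-\tfrac32)(n-1)$ for $|\theta|^2$, are indeed what that computation yields.
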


\begin{proof}
For the sake of notation, we set
\begin{equation} \label{def:fakeRicciCh}
\widetilde{\rm Ric}{}^{\rm Ch}(X,Y) \coloneqq {\rm tr} ( Z \mapsto \Omega^{\rm Ch}(X,Z)Y ) \,\, .
\end{equation}
Then, by \eqref{def:Chern} and \eqref{def:Chcurv}
\begin{align*}
\Omega^{\rm Ch}(X,Z) &= \nabla^{\rm Ch}_{[X,Z]} -[\nabla^{\rm Ch}_X, \nabla^{\rm Ch}_Z] \\
&= D_{[X,Z]} -[D_X, D_Z] +\tfrac12[D_X, A^{\rm Ch}(Z,\_)] -\tfrac12[D_Z, A^{\rm Ch}(X,\_)] -\tfrac12A^{\rm Ch}([X,Z],\_) \\
&\qquad -\tfrac14[A^{\rm Ch}(X,\_), A^{\rm Ch}(Z,\_)] \\
&= {\rm Rm}(X,Z) +\tfrac12(D_XA^{\rm Ch})(Z,\_) -\tfrac12(D_ZA^{\rm Ch})(X,\_) -\tfrac14[A^{\rm Ch}(X,\_), A^{\rm Ch}(Z,\_)]
\end{align*}
and so, by \eqref{def:fakeRicciCh}, we get
\begin{align*}
\widetilde{\rm Ric}{}^{\rm Ch}(X,Y) -{\rm Ric}(X,Y) &= \tfrac12\sum_{\alpha=1}^{2n}g((D_XA^{\rm Ch})(\tilde{e}_{\alpha},Y),\tilde{e}_{\alpha}) -\tfrac12\sum_{\alpha=1}^{2n}g((D_{\tilde{e}_{\alpha}}A^{\rm Ch})(X,Y),\tilde{e}_{\alpha}) \\
&\qquad -\tfrac14\sum_{\alpha=1}^{2n}g(A^{\rm Ch}(X,A^{\rm Ch}(\tilde{e}_{\alpha},Y)),\tilde{e}_{\alpha}) +\tfrac14\sum_{\alpha=1}^{2n}g(A^{\rm Ch}(\tilde{e}_{\alpha},A^{\rm Ch}(X,Y)),\tilde{e}_{\alpha}) \,\, ,
\end{align*}
where $(\tilde{e}_{\alpha})_{\alpha \in \{1,\ldots,2n\}}$ is any local $g$-orthonormal basis on $M$. Since
\begin{equation} \label{eq:d*omega}
{\rm d}^*\omega = -(n-1)J\theta \,\, ,
\end{equation}
by \eqref{eq:ACh}, \eqref{eq:DJ} and \eqref{eq:d*omega} it follows that
\begin{align*}
\sum_{\alpha=1}^{2n}g((D_XA^{\rm Ch})(\tilde{e}_{\alpha},Y),\tilde{e}_{\alpha}) &= 2(n-1)(D_X\theta)(Y) \,\, , \\
\sum_{\alpha=1}^{2n}g((D_{\tilde{e}_{\alpha}}A^{\rm Ch})(X,Y),\tilde{e}_{\alpha}) &= (D_X\theta)(Y) +(D_{JY}\theta)(JX) +\tfrac12\theta(X)\theta(Y) +(n-\tfrac12)\theta(JX)\theta(JY) \,\, , \\
&\quad +({\rm d}^*\theta) g(X,Y) -\tfrac12|\theta|^2g(X,Y) \,\, ,\\
\sum_{\alpha=1}^{2n}g(A^{\rm Ch}(X,A^{\rm Ch}(\tilde{e}_{\alpha},Y)),\tilde{e}_{\alpha}) &= \theta(X)\theta(Y) +\theta(JX)\theta(JY) -|\theta|^2g(X,Y) \,\, , \\
\sum_{\alpha=1}^{2n}g(A^{\rm Ch}(\tilde{e}_{\alpha},A^{\rm Ch}(X,Y)),\tilde{e}_{\alpha}) &= 2(n-1)\theta(X)\theta(Y) +2(n-1)\theta(JX)\theta(JY) -2(n-1)|\theta|^2g(X,Y)
\end{align*}
and so
\begin{multline} \label{eq:fakeRicCh-Ric}
\widetilde{\rm Ric}{}^{\rm Ch}(X,Y) -{\rm Ric}(X,Y) = (n-\tfrac32)(D_X\theta)(Y) -\tfrac12(D_{JY}\theta)(JX) -\tfrac12({\rm d}^*\theta) g(X,Y) \\
+(\tfrac{n}2-1)\theta(X)\theta(Y) -\tfrac12\theta(JX)\theta(JY) -(\tfrac{n}2-1)|\theta|^2g(X,Y) \,\, .
\end{multline}
By taking the trace of \eqref{eq:fakeRicCh-Ric}, we get \eqref{eq:fakescalCh-scal}.
\end{proof}

Analogously, the function $\widetilde{\rm scal}{}^{\rm Ch}$ and the Chern-scalar curvature are related, in the locally conformally K\"ahler setting, as follows (compare with \cite[Eq. (19)]{gauduchon-mathann}).

\begin{lemma}
The function $\widetilde{\rm scal}{}^{\rm Ch}$ defined in \eqref{def:fakescalCh} can be expressed in terms of the Chern scalar curvature ${\rm scal}^{\rm Ch}$ and $\theta$ as
\begin{equation} \label{eq:fakescalCh-scalCh}
\widetilde{\rm scal}{}^{\rm Ch} = {\rm scal}^{\rm Ch} -(n-1)({\rm d}^*\theta) -(n-1)^2|\theta|^2 \,\, .
\end{equation}
\end{lemma}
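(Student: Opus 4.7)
The plan is to exploit the first Bianchi identity for the Chern connection, which because $\nabla^{\rm Ch}$ carries torsion takes the nontrivial form
\[
\sum_{\rm cyc} \Omega^{\rm Ch}(X,Y)Z \;=\; \sum_{\rm cyc} (\nabla^{\rm Ch}_X T^{\rm Ch})(Y,Z) \;+\; \sum_{\rm cyc} T^{\rm Ch}\bigl(T^{\rm Ch}(X,Y),Z\bigr).
\]
The two scalar curvatures $\widetilde{\rm scal}{}^{\rm Ch}$ and ${\rm scal}^{\rm Ch}$ are two different traces of the same tensor $\Omega^{\rm Ch}$, and their difference is algebraically measured by this Bianchi correction. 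This is the locally conformally K\"ahler version of a classical identity of Gauduchon for Hermitian manifolds.

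First, I would expand $\widetilde{\rm scal}{}^{\rm Ch}$ using a local $(J,g)$-unitary frame $(e_i,Je_i)_{i=1,\ldots,n}$ and use that $\nabla^{\rm Ch} J=0$ and $\nabla^{\rm Ch} g=0$ imply $\Omega^{\rm Ch}(X,Y)$ commutes with $J$ and is $g$-skew on its last two arguments. Pairing the four index types $(e_i,e_j),(e_i,Je_j),(Je_i,e_j),(Je_i,Je_j)$ and exploiting these symmetries, the double sum \eqref{def:fakescalCh} can be reorganized so that the piece that matches \eqref{def:scalCh} is singled out, leaving a remainder that is precisely a cyclic permutation of arguments of $\Omega^{\rm Ch}$. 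Applying the Bianchi identity above converts this remainder into traces involving $\nabla^{\rm Ch} T^{\rm Ch}$ and $T^{\rm Ch}\!\circ T^{\rm Ch}$.

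Second, I would substitute the explicit torsion expression \eqref{eq:TCh} and its covariant derivative \eqref{eq:nablaChTCh}. The $T^{\rm Ch}\!\circ T^{\rm Ch}$ piece is purely algebraic in $\theta$, and after the unitary trace the cross-products $\theta(X)\theta(Y)+\theta(JX)\theta(JY)$ collapse into $|\theta|^2$, contributing the $-(n-1)^2|\theta|^2$ term. The $\nabla^{\rm Ch} T^{\rm Ch}$ piece contains summands of the form $(D_Z\theta)(Y)$; tracing these over the $(e_i,Je_i)$ frame produces $\sum_\alpha (D_{\tilde e_\alpha}\theta)(\tilde e_\alpha)=-{\rm d}^*\theta$, with coefficient $(n-1)$, yielding the $-(n-1)({\rm d}^*\theta)$ term; the antisymmetric combinations $(D_{\tilde e_\alpha}\theta)(J\tilde e_\alpha)-(D_{J\tilde e_\alpha}\theta)(\tilde e_\alpha)$ that appear must vanish, which follows from ${\rm d}\theta=0$.

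The main obstacle is the bookkeeping in the final step: formula \eqref{eq:nablaChTCh} has eight terms, each with a $J$-twisted counterpart, and one needs to check that the contributions organize into exactly ${\rm d}^*\theta$ and $|\theta|^2$ with the right coefficients rather than producing spurious pieces like $\theta(\theta^\sharp)^{\otimes\_}$ or $(D\theta)$-components. As an independent sanity check one may compare with the already-established formula \eqref{eq:fakescalCh-scal}: subtracting \eqref{eq:fakescalCh-scalCh} from \eqref{eq:fakescalCh-scal} must reproduce the Hermitian identity ${\rm scal}^{\rm Ch}={\rm scal}-(n-1)({\rm d}^*\theta)+\tfrac{n-1}{2}|\theta|^2$ quoted as \eqref{eq:scalCh-scal}, which fixes every coefficient uniquely and validates the trace calculations.
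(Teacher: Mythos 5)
Your proposal follows essentially the same route as the paper's proof: expand both traces in a local $(J,g)$-unitary frame, use the symmetries of $\Omega^{\rm Ch}$ to reduce the difference to a cyclic sum, convert that via the first Bianchi identity with torsion into traces of $T^{\rm Ch}\circ T^{\rm Ch}$ and $\nabla^{\rm Ch}T^{\rm Ch}$, and then substitute \eqref{eq:TCh} and \eqref{eq:nablaChTCh}. The only caveats are that with the paper's convention $\Omega^{\rm Ch}(X,Y)=\nabla^{\rm Ch}_{[X,Y]}-[\nabla^{\rm Ch}_X,\nabla^{\rm Ch}_Y]$ the Bianchi identity you display carries an overall minus sign on the right-hand side, and that \eqref{eq:scalCh-scal} is itself deduced in the paper from \eqref{eq:fakescalCh-scal} together with \eqref{eq:fakescalCh-scalCh}, so it can serve only as an external (Gauduchon) consistency check rather than an independent one.
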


\begin{proof}
Fix a local $(J,g)$-unitary basis $(\tilde{e}_{\alpha})_{\alpha \in \{1,\ldots,2n\}} = (e_i,Je_i)_{i \in \{1,\ldots,n\}}$ on $M$. Then, by \eqref{eq:propertiesCh}, \eqref{eq:nablaChTCh}, \eqref{def:fakescalCh}, \eqref{def:scalCh} the first Bianchi Identity \cite[Theorem III.5.3]{KN1} and the properties of the Chern curvature tensor, one can compute
\begin{align*}
&\widetilde{\rm scal}{}^{\rm Ch} - {\rm scal}^{\rm Ch} = \\
&= \sum_{\alpha,\beta=1}^{2n} g(\Omega^{\rm Ch}(\tilde{e}_{\alpha},\tilde{e}_{\beta})\tilde{e}_{\alpha},\tilde{e}_{\beta}) -2 \sum_{i,j=1}^{n} g(\Omega^{\rm Ch}(e_i,Je_i)e_j,Je_j) \\
&= \sum_{i,j=1}^{n} \Big( g(\Omega^{\rm Ch}(e_i,e_j)e_i,e_j)
+g(\Omega^{\rm Ch}(e_i,Je_j)e_i,Je_j)
+g(\Omega^{\rm Ch}(Je_i,e_j)Je_i,e_j) \\
&\qquad\qquad +g(\Omega^{\rm Ch}(Je_i,Je_j)Je_i,Je_j)
-2g(\Omega^{\rm Ch}(e_i,Je_i)e_j,Je_j) \Big) \\
&= \sum_{i,j=1}^{n} \Big(-2g(\Omega^{\rm Ch}(Je_i,e_j)e_i,Je_j)
-2g(\Omega^{\rm Ch}(e_i,Je_i)e_j,Je_j)
-2g(\Omega^{\rm Ch}(e_j,e_i)Je_i,Je_j) \Big) \\
&= \sum_{i,j=1}^{n} \Big( 2g(T^{\rm Ch}(T^{\rm Ch}(Je_i,e_j),e_i),Je_j)
+2g(T^{\rm Ch}(T^{\rm Ch}(e_i,Je_i),e_j),Je_j)
+2g(T^{\rm Ch}(T^{\rm Ch}(e_j,e_i),Je_i),Je_j) \\
&\qquad\qquad +2g((\nabla^{\rm Ch}_{Je_i}T^{\rm Ch})(e_j,e_i),Je_j)
+2g((\nabla^{\rm Ch}_{e_i}T^{\rm Ch})(Je_i,e_j),Je_j)
+2g((\nabla^{\rm Ch}_{e_j}T^{\rm Ch})(e_i,Je_i),Je_j) \Big) \\
&= \sum_{i,j=1}^{n} \Big( 2g((\nabla^{\rm Ch}_{e_i}T^{\rm Ch})(e_i,e_j),e_j) -2g((\nabla^{\rm Ch}_{Je_i}T^{\rm Ch})(e_i,e_j),Je_j) \Big) \\
&= \sum_{i,j=1}^{n} \Big( \big((D_{e_i}\theta)(e_i) +(D_{Je_i}\theta)(Je_i)\big)g(e_j,e_j) +\big(\theta(e_i)\theta(e_i) +\theta(Je_i)\theta(Je_i) -|\theta|^2g(e_i,e_i)\big)g(e_j,e_j) \\
&\qquad\qquad -\big((D_{e_i}\theta)(e_j)+(D_{Je_i}\theta)(Je_j)\big)g(e_i,e_j) -\big(\theta(e_i)\theta(e_j) +\theta(Je_i)\theta(Je_j) -|\theta|^2g(e_i,e_j)\big)g(e_i,e_j) \Big) \\
&= -n({\rm d}^*\theta) -n(n-1)|\theta|^2 +({\rm d}^*\theta) +(n-1)|\theta|^2 \\
&= -(n-1)({\rm d}^*\theta) -(n-1)^2|\theta|^2
\end{align*}
which yields the proof.
\end{proof}

As a direct consequence of \eqref{eq:fakescalCh-scal} and \eqref{eq:fakescalCh-scalCh}, we obtain the following relation between the Riemannian scalar curvature and the Chern scalar curvature in the locally conformally K\"ahler setting (compare with \cite[Eq. (33)]{gauduchon-mathann})

\begin{corollary}
The difference between the Riemannian scalar curvature and the Chern scalar curvature can be expressed in terms of $\theta$ as
\begin{equation} \label{eq:scalCh-scal}
{\rm scal} -{\rm scal}^{\rm Ch} = (n-1)({\rm d}^*\theta) -\tfrac{n-1}2|\theta|^2 \,\, .
\end{equation}
\end{corollary}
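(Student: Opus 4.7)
The statement is the difference of the two identities \eqref{eq:fakescalCh-scal} and \eqref{eq:fakescalCh-scalCh} that have just been established immediately above it in the appendix. Both of these expressed the auxiliary quantity $\widetilde{\rm scal}{}^{\rm Ch}$ (the real trace of the Chern curvature operator, defined in \eqref{def:fakescalCh}) in terms of, respectively, the Riemannian scalar curvature and the Chern scalar curvature, plus lower-order terms in $\theta$. Since $\widetilde{\rm scal}{}^{\rm Ch}$ is an intermediate bookkeeping object, eliminating it between the two formulas gives a relation between the two scalar curvatures that actually matter.

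The plan is therefore simply to equate the two right-hand sides. From \eqref{eq:fakescalCh-scal} we have
\[
\widetilde{\rm scal}{}^{\rm Ch} = {\rm scal} -2(n-1)({\rm d}^*\theta) -(n-\tfrac{3}{2})(n-1)|\theta|^2,
\]
and from \eqref{eq:fakescalCh-scalCh} we have
\[
\widetilde{\rm scal}{}^{\rm Ch} = {\rm scal}^{\rm Ch} -(n-1)({\rm d}^*\theta) -(n-1)^2|\theta|^2.
\]
Subtracting, the $\widetilde{\rm scal}{}^{\rm Ch}$ cancels and one isolates
\[
{\rm scal} - {\rm scal}^{\rm Ch} = \bigl(2(n-1)-(n-1)\bigr)({\rm d}^*\theta) + \bigl((n-\tfrac{3}{2})(n-1) - (n-1)^2\bigr)|\theta|^2.
\]
The first coefficient is $(n-1)$, and the second simplifies via $(n-\tfrac32)(n-1)-(n-1)^2 = (n-1)\bigl((n-\tfrac32)-(n-1)\bigr) = -\tfrac{n-1}{2}$, giving exactly
\[
{\rm scal} - {\rm scal}^{\rm Ch} = (n-1)({\rm d}^*\theta) - \tfrac{n-1}{2}|\theta|^2,
\]
which is \eqref{eq:scalCh-scal}.

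There is essentially no obstacle here: all the analytic and tensorial work was already carried out in the two preceding lemmas, where one had to differentiate $A^{\rm Ch}$, use the first Bianchi identity for $\nabla^{\rm Ch}$, and invoke the formula \eqref{eq:nablaChTCh} for $\nabla^{\rm Ch}T^{\rm Ch}$ together with the computation of the various unitary-frame traces. The corollary is a purely algebraic consequence, and the only thing to double-check is the arithmetic of the coefficients of $({\rm d}^*\theta)$ and $|\theta|^2$, which indeed combine as required.
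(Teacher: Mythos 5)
Your proposal is correct and is exactly the paper's argument: the corollary is stated there as a direct consequence of \eqref{eq:fakescalCh-scal} and \eqref{eq:fakescalCh-scalCh}, obtained by eliminating $\widetilde{\rm scal}{}^{\rm Ch}$ between the two identities. Your coefficient arithmetic, including $(n-\tfrac32)(n-1)-(n-1)^2=-\tfrac{n-1}{2}$, checks out.
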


\subsection{The Weyl connection of a locally conformally K\"ahler manifold} \hfill \par

Let $\nabla$ be the Weyl connection of $(M,J,g)$ as in \eqref{def:Weyl}. We denote now by
\begin{equation} \label{def:Weylcurv}
\Omega^\nabla(X,Y) \coloneqq \nabla_{[X,Y]}-[\nabla_X,\nabla_Y]
\end{equation}
the {\it Weyl curvature tensor} of $(M,J,g)$. Here, we collect some well-known properties of the Weyl curvature and, for the sake of completeness, we give some details on the proof.

\begin{lemma} \label{lem:weyl-curvature}
The Weyl-curvature tensor $\Omega^\nabla$ satisfies the following properties:
\begin{gather}
\Omega^\nabla(X,Y)Z +\Omega^\nabla(Y,Z)X +\Omega^\nabla(Z,X)Y = 0 \,\, , \label{eq:omegabianchi1} \\
g(\Omega^\nabla(X,Y)Z,W) +g(Z,\Omega^\nabla(X,Y)W) = 0 \,\, , \label{eq:omegaskew} \\
\Omega^\nabla(X,Y) \circ J = J \circ \Omega^\nabla(X,Y) \,\, , \label{eq:omegaJinv} \\
\Omega^\nabla(JX,JY) = \Omega^\nabla(X,Y) \,\, . \label{eq:omega11}
\end{gather}
\end{lemma}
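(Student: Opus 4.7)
My plan is to prove the four identities in sequence, using one feature of the Weyl connection from \eqref{eq:propertiesweyl} at each step. For the first Bianchi identity \eqref{eq:omegabianchi1}, I would invoke the standard derivation for torsion-free connections: expanding the cyclic sum of $\Omega^\nabla(X,Y)Z$ and grouping pairs of the form $\nabla_A B - \nabla_B A$, each such pair equals $[A,B]$ by $T^\nabla=0$, and the surviving terms collapse to $[X,[Y,Z]]+[Y,[Z,X]]+[Z,[X,Y]]=0$ by the Jacobi identity. For the $J$-commutativity \eqref{eq:omegaJinv}, since $\nabla J=0$, every $\nabla_Z$ commutes with $J$ on tensor fields; hence so do $\nabla_{[X,Y]}$ and $[\nabla_X,\nabla_Y]$, and therefore their difference $\Omega^\nabla(X,Y)$.

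For the skew-symmetry \eqref{eq:omegaskew}, I would exploit the fact that $\Omega^\nabla(X,Y)$ acts as a derivation on $(0,2)$-tensors, so that
\begin{equation*}
(\Omega^\nabla(X,Y)\cdot g)(Z,W) = -g(\Omega^\nabla(X,Y)Z,W) - g(Z,\Omega^\nabla(X,Y)W)\,.
\end{equation*}
Using $\nabla g=\theta\otimes g$ and torsion-freeness of $\nabla$, one unwinds the second covariant derivatives to find that the left-hand side equals $d\theta(X,Y)\,g(Z,W)$, which vanishes because $\theta$ is closed; this yields the desired identity. The only mildly delicate point, and the main obstacle I anticipate, is the sign bookkeeping induced by the convention $\Omega^\nabla=\nabla_{[X,Y]}-[\nabla_X,\nabla_Y]$ adopted in \eqref{def:Weylcurv}, which is opposite to the most common one.

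Finally, for \eqref{eq:omega11}, I would combine the antisymmetry of $\Omega^\nabla(X,Y)$ in $(X,Y)$, the first Bianchi identity \eqref{eq:omegabianchi1}, and the skew-symmetry \eqref{eq:omegaskew} via the classical four-step argument to derive the pair symmetry $g(\Omega^\nabla(X,Y)Z,W) = g(\Omega^\nabla(Z,W)X,Y)$. Then, noting that $J$ is a $g$-isometry (since $g=\omega(\cdot,J\cdot)$ and $\omega$ is $J$-invariant) and using the $J$-commutativity \eqref{eq:omegaJinv},
\begin{align*}
g(\Omega^\nabla(JX,JY)Z,W) &= g(\Omega^\nabla(Z,W)JX,JY) = g(J\Omega^\nabla(Z,W)X,JY) \\
&= g(\Omega^\nabla(Z,W)X,Y) = g(\Omega^\nabla(X,Y)Z,W)\,,
\end{align*}
which yields \eqref{eq:omega11} by non-degeneracy of $g$. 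As a clean alternative, which I would mention briefly, since $\theta$ is closed one can locally write $\theta=d\phi$, so that $\nabla$ coincides with the Levi-Civita connection of the K\"ahler metric $e^{-\phi}g$ on each simply connected open set; all four identities are then pointwise symmetries of a K\"ahler Riemann curvature tensor, and extend globally by tensoriality.
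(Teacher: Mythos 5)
Your treatment of \eqref{eq:omegabianchi1}, \eqref{eq:omegaskew} and \eqref{eq:omegaJinv} is correct and coincides with the paper's: the first Bianchi identity is quoted for torsion-free connections, \eqref{eq:omegaJinv} is immediate from $\nabla J=0$, and for \eqref{eq:omegaskew} the paper carries out exactly the computation you describe, finding that the left-hand side equals ${\rm d}\theta(X,Y)\,g(Z,W)$, which vanishes because $\theta$ is closed (and, as you note, the overall sign forced by the convention \eqref{def:Weylcurv} is immaterial since the identity is homogeneous in $\Omega^\nabla$). Where you genuinely diverge is \eqref{eq:omega11}. You first deduce the pair symmetry $g(\Omega^\nabla(X,Y)Z,W)=g(\Omega^\nabla(Z,W)X,Y)$ by the standard algebraic argument from the antisymmetry in $(X,Y)$, the identity \eqref{eq:omegaskew} and the Bianchi identity \eqref{eq:omegabianchi1}, and then combine it with \eqref{eq:omegaJinv} and the $J$-invariance of $g$ (which indeed follows from \eqref{def:compatibilty}); this chain is valid and entirely real and elementary. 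The paper instead argues complex-analytically: it checks that $A^{\rm Ch}(\bar\eta,\xi)=A(\bar\eta,\xi)$ for $(1,0)$-vector fields $\xi,\eta$, so that the $(0,1)$-part of the Weyl connection is the Cauchy--Riemann operator of the holomorphic tangent bundle, whence the $(0,2)$-component of $\Omega^\nabla$ vanishes, and the $(2,0)$-component vanishes by the skew-Hermitian symmetry \eqref{eq:omegaskew}; being of type $(1,1)$ is exactly \eqref{eq:omega11}. Your route buys self-containedness and avoids complexification, while the paper's route records the extra fact that $\nabla^{0,1}=(\nabla^{\rm Ch})^{0,1}$, which links the Weyl curvature to the Chern connection studied in the same appendix. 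Your closing alternative --- that locally $\theta={\rm d}\phi$, so $\nabla$ is the Levi-Civita connection of the local K\"ahler metric $e^{-\phi}g$ and all four identities are inherited pointwise --- is also sound, since the identities are tensorial and \eqref{eq:omegaskew} is insensitive to a positive conformal rescaling of $g$.
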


\begin{proof}
The Bianchi identity \eqref{eq:omegabianchi1} holds since the Weyl connection is torsion-free \cite[Theorem III.5.3]{KN1}. Moreover, a straightforward computation shows that
\begin{align*}
&g(\Omega^{\nabla}(X,Y)Z,W) +g(Z,\Omega^{\nabla}(X,Y)W) \\
&\quad = g(\nabla_{[X,Y]}Z,W) +g(Z,\nabla_{[X,Y]}W) -g(\nabla_X\nabla_YZ,W) -g(Z,\nabla_X\nabla_YW) \\
&\quad\quad +g(\nabla_Y\nabla_XZ,W) +g(Z,\nabla_Y\nabla_XW) \\
&\quad = \mathcal{L}_{[X,Y]}(g(Z,W)) -\theta([X,Y])g(Z,W) -\mathcal{L}_X\big(g(\nabla_YZ,W)+g(Z,\nabla_YW)\big) \\
&\quad\quad +\theta(X)\big(g(\nabla_YZ,W) +g(Z,\nabla_YW)\big) +\mathcal{L}_Y\big(g(\nabla_XZ,W)+g(Z,\nabla_XW)\big) \\
&\quad\quad -\theta(Y)\big(g(\nabla_XZ,W) +g(Z,\nabla_XW)\big) \\
&\quad = \big(\mathcal{L}_X(\theta(Y)) -\mathcal{L}_Y(\theta(X)) -\theta([X,Y])\big)g(Z,W) -\theta(X)\big(\mathcal{L}_X(g(Z,W)) -g(\nabla_YZ,W) -g(Z,\nabla_YW)\big) \\
&\quad\quad +\theta(Y)\big(\mathcal{L}_X(g(Z,W)) -g(\nabla_XZ,W) -g(Z,\nabla_XW)\big) \\
&\quad = {\rm d}\theta(X,Y)g(Z,W)
\end{align*}
and so, as $\theta$ is closed, this proves \eqref{eq:omegaskew}. Formula \eqref{eq:omegaJinv} follows directly from $\nabla J=0$. As for \eqref{eq:omega11} we notice that, when $\mathbb{C}$-linearly extended to the complexified tangent bundle $T^\mathbb{C}M$ and then restricted to the holomorphic tangent bundle, the $(0,1)$-component of the Weyl connection coincides with the $(0,1)$-component of the Chern connection. Indeed
$$
A^{\rm Ch}(\overline{\eta},\xi) = \theta(J\overline{\eta})J\xi +\theta(\xi)\overline{\eta} -g(\overline{\eta},\xi)\theta^{\#} = \theta(\overline{\eta})\xi +\theta(\xi)\overline{\eta} -g(\overline{\eta},\xi)\theta^{\#} = A(\overline{\eta},\xi)
$$
for any pair of complex $(1,0)$-vector fields $\xi,\eta$ on $(M,J)$. This implies that the $(0,1)$-component of the Weyl connection is the Cauchy-Riemann operator of the holomorphic tangent bundle of $(M,J)$, and therefore the $(0,2)$-component of $\Omega^{\nabla}$ vanishes. Moreover, since $\Omega^{\nabla}$ is skew-Hermitian, its $(2,0)$-component vanishes, and this completes the proof.
\end{proof}

By taking the trace of the Weyl curvature tensor \eqref{def:Weylcurv}, we define the {\it Weyl-Ricci tensor}
\begin{equation} \label{def:RicciWeyl}
{\rm Ric}^\nabla(X,Y) \coloneqq {\rm tr} ( Z \mapsto \Omega^\nabla(X,Z)Y )
\end{equation}
and we notice that Lemma \ref{lem:weyl-curvature} has the following immediate

\begin{corollary} \label{cor:RicJinv}
The Weyl-Ricci tensor ${\rm Ric}^\nabla$ is symmetric and $J$-invariant, {\it i.e.\ }
$$
{\rm Ric}^\nabla(X,Y) = {\rm Ric}^\nabla(Y,X) \,\, , \quad {\rm Ric}^\nabla(JX,JY) = {\rm Ric}^\nabla(X,Y) \,\, . 
$$
\end{corollary}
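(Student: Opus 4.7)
The plan is to derive both symmetries from the four structural identities for the Weyl curvature tensor collected in Lemma \ref{lem:weyl-curvature}, by contracting over one slot of $\Omega^\nabla$ in the definition \eqref{def:RicciWeyl}.

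For the $J$-invariance, I would invoke the cyclic property of the trace: for any endomorphism $T$ and invertible endomorphism $A$, one has ${\rm tr}(T) = {\rm tr}(A^{-1}TA)$. Applied with $A=J$ to the definition, this gives
\[
{\rm Ric}^\nabla(JX,JY) = {\rm tr}\bigl(Z \mapsto \Omega^\nabla(JX,Z)JY\bigr) = {\rm tr}\bigl(Z \mapsto J^{-1}\Omega^\nabla(JX,JZ)JY\bigr).
\]
Now \eqref{eq:omegaJinv} lets me commute the outer $J$ through $\Omega^\nabla(JX,JZ)$, cancelling the $J^{-1}$, and \eqref{eq:omega11} then collapses $\Omega^\nabla(JX,JZ) = \Omega^\nabla(X,Z)$. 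What is left is precisely ${\rm tr}(Z \mapsto \Omega^\nabla(X,Z)Y) = {\rm Ric}^\nabla(X,Y)$.

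For the symmetry, I would take the first Bianchi identity \eqref{eq:omegabianchi1}, fix $X$ and $Y$, and take the trace in the free variable $Z$. Three contributions appear. The first, ${\rm tr}(Z\mapsto \Omega^\nabla(X,Y)Z)$, is just the trace of the endomorphism $\Omega^\nabla(X,Y)$, which vanishes because \eqref{eq:omegaskew} asserts that this endomorphism is $g$-skew-symmetric. The second is ${\rm tr}(Z\mapsto \Omega^\nabla(Y,Z)X) = {\rm Ric}^\nabla(Y,X)$ by the very definition. The third, ${\rm tr}(Z\mapsto \Omega^\nabla(Z,X)Y)$, equals $-{\rm Ric}^\nabla(X,Y)$ by antisymmetry of $\Omega^\nabla$ in its first two arguments. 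Summing to zero yields ${\rm Ric}^\nabla(X,Y) = {\rm Ric}^\nabla(Y,X)$.

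There is no real obstacle here: the result is a standard trace-and-Bianchi consequence of the curvature symmetries already proved, and the only mildly delicate point is the initial conjugation by $J$ in the first step, which is a clean application of the cyclicity of the trace.
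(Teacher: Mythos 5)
Your argument is correct and follows exactly the route the paper intends: the corollary is stated as an immediate consequence of Lemma \ref{lem:weyl-curvature}, and your proof supplies the standard details — similarity-invariance of the trace combined with \eqref{eq:omegaJinv} and \eqref{eq:omega11} for the $J$-invariance, and the traced first Bianchi identity \eqref{eq:omegabianchi1} together with the vanishing trace of the $g$-skew endomorphism $\Omega^\nabla(X,Y)$ from \eqref{eq:omegaskew} for the symmetry. No gaps.
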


Moreover, the difference between the two Ricci tensors ${\rm Ric}^\nabla$, ${\rm Ric}$ can be easily expressed in terms of the Lee form $\theta$. More precisely

\begin{proposition}
The following equality holds true:
\begin{equation} \label{eq:Ric-Ric}
({\rm Ric}^\nabla)^{\sharp} -({\rm Ric})^{\sharp} = (n-1)(\delta^*\theta) +\tfrac{n-1}2 \theta \otimes \theta^{\sharp} -\tfrac12({\rm d}^*\theta){\rm Id} -\tfrac{n-1}2|\theta|^2{\rm Id} \,\, .
\end{equation}
\end{proposition}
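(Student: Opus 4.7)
The plan is to compare $\Omega^\nabla$ and $\mathrm{Rm}$ by writing $\nabla = D + B$ with $B(X,Y) \coloneqq -\tfrac12 A(X,Y)$ and then to trace. Since both connections are torsion-free, an elementary computation gives
\begin{equation*}
\Omega^\nabla(X,Y)Z - \mathrm{Rm}(X,Y)Z = (D_Y B)(X,Z) - (D_X B)(Y,Z) + B(Y,B(X,Z)) - B(X,B(Y,Z)).
\end{equation*}
Tracing in the second slot against a local $g$-orthonormal frame $(\tilde e_\alpha)$, as required by the definition of $\mathrm{Ric}^\nabla$, yields
\begin{equation*}
\mathrm{Ric}^\nabla(X,Y) - \mathrm{Ric}(X,Y) = \sum_\alpha g\!\big((D_{\tilde e_\alpha} B)(X,Y) - (D_X B)(\tilde e_\alpha, Y) + B(\tilde e_\alpha, B(X,Y)) - B(X, B(\tilde e_\alpha, Y)),\, \tilde e_\alpha\big).
\end{equation*}

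Next I would substitute $A(X,Y) = \theta(X)Y + \theta(Y)X - g(X,Y)\theta^\sharp$ and expand. The quadratic-in-$B$ terms will give the $|\theta|^2 \mathrm{Id}$ and $\theta\otimes\theta^\sharp$ contributions, in complete analogy with the computation of $A^{\mathrm{Ch}}$-quadratic traces performed in the proof of \eqref{eq:fakeRicCh-Ric}; only numerical coefficients differ. The derivative terms will assemble into combinations of $(D_Z\theta)(W)$; because $\theta$ is closed, $D\theta$ is symmetric, and formula \eqref{eq:delta*} applied to $\zeta = \theta$ (noting ${\rm d}_\theta\theta = -\theta\wedge\theta = 0$) yields $\delta^*\theta = \nabla(\theta^\sharp) + \tfrac12|\theta|^2\mathrm{Id}$, which by \eqref{eq:lin-deltaomega-dim(4)} equals $D(\theta^\sharp)$. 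In this way the symmetric derivative contributions collect into the multiple $(n-1)\delta^*\theta$, while the pure trace contraction produces the $-\tfrac12({\rm d}^*\theta)\mathrm{Id}$ term.

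The main obstacle will be careful bookkeeping of the many distinct terms of the form $\theta(X)\theta(Y)$, $|\theta|^2 g(X,Y)$, $(D_X\theta)(Y)$ and $({\rm d}^*\theta) g(X,Y)$ generated during the expansion, and their regrouping into the four coefficients $(n-1)$, $\tfrac{n-1}{2}$, $-\tfrac12$ and $-\tfrac{n-1}{2}$ appearing in \eqref{eq:Ric-Ric}. As an independent sanity check, since $\nabla$ locally coincides with the Levi-Civita connection of the conformally related metric $e^{-f}g$ (with $df = \theta$ in each local chart), the identity \eqref{eq:Ric-Ric} is also a special instance of the classical conformal-transformation formula for the Ricci tensor; this provides a painless verification of the constants without redoing the entire expansion.
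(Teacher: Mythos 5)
Your main line of argument is exactly the paper's: write $\nabla = D - \tfrac12 A$, express $\Omega^\nabla - {\rm Rm}$ through the first covariant derivatives and the quadratic commutator of $A$, trace against an orthonormal frame, and use that ${\rm d}\theta = 0$ makes $D\theta$ symmetric so that $(D\theta)^\sharp = D(\theta^\sharp) = \delta^*\theta$; your curvature-difference formula and the reduction of the trace to the four types of terms are correct, and the remaining step is precisely the four explicit trace identities the paper computes. The one genuinely different ingredient is your closing observation: since $\theta$ is locally exact and the identity to be proved is tensorial, the statement follows pointwise from the classical conformal-change formula for the Ricci tensor applied to $e^{-f}g$ with ${\rm d}f = \theta$, which pins down all four coefficients without any frame computation --- this is in fact a complete alternative proof, shorter than the expansion, whereas the paper carries out the expansion in full.
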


\begin{proof}
Fix a local $g$-orthonormal frame $(\tilde{e}_{\alpha})_{\alpha \in \{1,\ldots,2n\}}$ on $M$. A straightforward computation based on \eqref{def:Weyl}, \eqref{def:Weylcurv} and \eqref{def:Riemcurv} shows that
$$
\Omega^{\nabla}(X,Z) -{\rm Rm}(X,Z) = \tfrac12(D_XA)(Z,\_) -\tfrac12(D_ZA)(X,\_) -\tfrac14[A(X,\_), A(Z,\_)]
$$
and so, by \eqref{def:Riemcurv} and \eqref{def:RicciWeyl}
\begin{equation} \label{eq:Ric-Ric(1)} \begin{aligned}
{\rm Ric}^\nabla(X,Y) -{\rm Ric}(X,Y) &= \tfrac12\sum_{\alpha=1}^{2n} g((D_XA)(\tilde{e}_{\alpha},Y),\tilde{e}_{\alpha}) -\tfrac12\sum_{\alpha=1}^{2n} g((D_{\tilde{e}_{\alpha}}A)(X,Y),\tilde{e}_{\alpha}) \\
&\qquad -\tfrac14\sum_{\alpha=1}^{2n} g(A(X,A(\tilde{e}_{\alpha},Y)),\tilde{e}_{\alpha}) +\tfrac14\sum_{\alpha=1}^{2n} g(A(\tilde{e}_{\alpha},A(X,Y)),\tilde{e}_{\alpha}) \,\, .
\end{aligned} \end{equation}
By \eqref{def:Weyl}, it follows that
$$
(D_XA)(Y,Z) = (D_X\theta)(Y)Z +(D_X\theta)(Z)Y -g(Y,Z)D_X\theta^{\sharp} \,\, ,
$$
and so a straightforward computation shows that
\begin{equation} \label{eq:Ric-Ric(2)} \begin{aligned}
\sum_{\alpha=1}^{2n} g((D_XA)(\tilde{e}_{\alpha},Y),\tilde{e}_{\alpha}) &= 2n(D_X\theta)(Y) \,\, ,\\
\sum_{\alpha=1}^{2n} g((D_{\tilde{e}_{\alpha}}A)(X,Y),\tilde{e}_{\alpha}) &= (D_X\theta)(Y) +(D_Y\theta)(X) +({\rm d}^*\theta)g(X,Y) \,\, , \\
\sum_{\alpha=1}^{2n} g(A(X,A(\tilde{e}_{\alpha},Y)),\tilde{e}_{\alpha}) &= 2(n+1)\theta(X)\theta(Y) -2|\theta|^2g(X,Y) \,\, ,\\
\sum_{\alpha=1}^{2n} g(A(\tilde{e}_{\alpha},A(X,Y)),\tilde{e}_{\alpha}) &= 4n\theta(X)\theta(Y) -2n|\theta|^2g(X,Y) \,\, .
\end{aligned} \end{equation}
Finally, since $\theta$ is closed, it follows that $D\theta$ is symmetric and so
\begin{equation} \label{eq:Ric-Ric(4)}
(D\theta)^{\sharp} = D(\theta^{\sharp}) = \delta^*\theta \,\, .
\end{equation}
Therefore, \eqref{eq:Ric-Ric} follows from \eqref{eq:Ric-Ric(1)}, \eqref{eq:Ric-Ric(2)} and \eqref{eq:Ric-Ric(4)}.
\end{proof}

\bigskip

\noindent {\bf Conflict of Interest} The authors have no conflict of interest to declare that are relevant to this article.

\medskip

\end{document}